\titlespacing{\paragraph}{0em}{0em}{0.5em}
\titlespacing{\subparagraph}{0em}{0em}{0.5em}
\def\taskip{\renewcommand{\arraystretch}{1}\renewcommand{\baselinestretch}{1}}
\def\teskip{\renewcommand{\arraystretch}{1.1}\renewcommand{\baselinestretch}{1.15}}
\def\bexa{\begin{example}}\def\eexa{\end{example}}
\def\brem{\begin{remark}}\def\erem{\end{remark}}
\def\bthm{\begin{theorem}}\def\ethm{\end{theorem}}
\def\blem{\begin{lemma}}\def\elem{\end{lemma}}
\def\bcor{\begin{corollary}}\def\ecor{\end{corollary}}
\def\bdefi{\begin{definition}}\def\edefi{\end{definition}}
\def\bexe{\begin{exercise}}\def\eexe{\eex\end{exercise}}
\def\rb{\raisebox}
\newcommand{\abs}[1]{\lvert#1\rvert}\newcommand{\norm}[1]{\lVert#1\rVert}
\def\bmip{\begin{minipage}{\textwidth}}\def\emip{\end{minipage}}
\def\huga#1{\begin{gather} #1 \end{gather}}
\def\hual#1{\begin{align} #1 \end{align}}
\newcommand{\R}{{\mathbb R}}
\newcommand{\N}{{\mathbb N}}
\newcommand{\Z}{{\mathbb Z}}
\def\CO{{\cal O}}
\def\ga{\gamma}
\def\ds{\displaystyle}
\def\pa{{\partial}}\def\lam{\lambda}
\newcommand{\bi}{\begin{itemize}}\newcommand{\ei}{\end{itemize}}
\newcommand{\ben}{\begin{enumerate}}\newcommand{\een}{\end{enumerate}}
\newcommand{\bce}{\begin{center}}\newcommand{\ece}{\end{center}}
\newcommand{\bci}{\begin{compactitem}}\newcommand{\eci}{\end{compactitem}}
\newcommand{\bcen}{\begin{compactenum}}\newcommand{\ecen}{\end{compactenum}}
\newcommand{\bcena}{\begin{compactenum}[(a)]}
\newcommand{\reff}[1]{(\ref{#1})}
\newcommand{\btab}[2]{\begin{tabular}{#1}#2\end{tabular}}
\newcommand{\spr}[1]{\langle #1 \rangle}
\newcommand{\hs}[1]{{\hspace{#1}}}\newcommand{\vs}[1]{{\vspace{#1}}}
\def\eps{\varepsilon}
\newcommand{\barr}{\begin{array}}\newcommand{\earr}{\end{array}}
\newcommand{\bpm}{\begin{pmatrix}}\newcommand{\epm}{\end{pmatrix}}
\newcommand{\bsm}{\left(\begin{smallmatrix}}
\newcommand{\esm}{\end{smallmatrix}\right)}
\newcommand{\bsmm}{\begin{large}\left(\begin{smallmatrix}}
\newcommand{\esmm}{\end{smallmatrix}\right)\end{large}}
\def\sm{\small}
\newcommand{\ba}{\begin{array}}\newcommand{\ea}{\end{array}}
\def\dd{\, {\rm d}}
\def\Om{\Omega}
\def\del{\delta}
\def\eex{\hfill\mbox{$\rfloor$}}
\def\Del{\Delta}
\def\sig{\sigma}
\def\al{\alpha}
\def\Ga{\Gamma}
\def\bd{\begin{displaymath}} \def\ed{\end{displaymath}}
\def\ba{\begin{array}} \def\ea{\end{array}}
\def\eps{\varepsilon}
\def\ind{{\rm ind}}\def\ig{\includegraphics}
\def\pdep{{\tt pde2path}}
\def\tew{\textwidth}
\numberwithin{equation}{section}
\def\hutab#1#2{\taskip\begin{\table}#1\end{table}\teskip}
\newcommand{\cupdot}{\mathbin{\mathaccent\cdot\cup}}
\newcommand{\vol}[0]{\text{vol}}\newcommand{\diam}{\text{diam}}
\newcommand{\Lip}{\text{Lip}}\newcommand{\loc}{\text{loc}}
\newcommand{\cE}{\mathcal E}
\newcommand{\cH}{\mathcal H}
\newcommand{\cF}{\mathcal F}
\newcommand{\Mbar}{\overline{M}}
\newcommand{\Vtilde}{\tilde{V}}
\newcommand{\Gtilde}{\tilde{G}}
\def\xti{\tilde x}\def\yti{\tilde y}\def\zti{\tilde z}
\newtheorem{theorem}{Theorem}[section]
\newtheorem{remark}[theorem]{Remark}
\newtheorem{conj}[theorem]{Conjecture}
\newtheorem{definition}[theorem]{Definition}
\newtheorem{example}[theorem]{Example}
\newtheorem{Lem}[theorem]{Lemma}
\newtheorem{Prop}[theorem]{Proposition}
\newtheorem{resu}[theorem]{Result}
\newtheorem{Def}[theorem]{Definition}
\newtheorem{Rem}[theorem]{Remark}
\def\sg{\sqrt{g}}\def\sgi{\frac 1 {\sqrt{g}}}
\def\mid{:}  
\title{Phase transitions and minimal interfaces on 
manifolds with conical singularities\\ {\normalsize \today}}
\author{Daniel Grieser${}^*$, \ Sina Held${}^\dagger$, \ Hannes Uecker${}^\ddagger$ and Boris Vertman${}^\sharp$}
\date{\vspace{-5ex}}
\begin{document}
\maketitle

\begin{abstract} 
Using $\Ga$--convergence, we study the 
Cahn-Hilliard problem with interface width parameter 
$\eps{>}0$ for phase transitions on 
manifolds with conical singularities. 
We prove that minimizers of the corresponding 
energy functional exist and converge, as $\varepsilon{\to}0$, to a function that takes only two values with an interface along 
a hypersurface that has minimal area among those satisfying a volume constraint. 
In a numerical example, we use continuation 
and bifurcation methods to study families of critical points at small $\varepsilon{>}0$ on 2D elliptical
cones, parameterized by height and ellipticity of the base.  
Some of these critical points are minimizers with interfaces crossing the cone tip.
On the other hand, we prove that interfaces which are 
minimizers of the perimeter functional, corresponding to $\eps=0$, 
never pass through the cone tip for general cones with angle less than $2\pi$.
Thus tip minimizers for finite $\eps{>}0$ must become saddles as $\eps{\to}0$, 
and we numerically identify the associated bifurcation, finding 
a delicate interplay of $\eps{>}0$ and the cone parameters in our example. 
\end{abstract}

\setcounter{tocdepth}{1}
\tableofcontents

\section{Introduction and statement of the main results}

\subsection{The Cahn-Hilliard problem}\label{CH-intro-subsection} 
In the Cahn-Hilliard gradient theory of phase transitions we are interested in 
the minimizers, or more generally critical points, of the energy functional
\huga{\label{eemin} 
E_\eps(u) :=\frac 1 {2\sig}\int_M \frac \eps 2 |\nabla u|_g^2 + \frac 1 \eps W(u)\dd \textup{vol}_g,
} 
where $\varepsilon > 0$ is a parameter, $(M,g)$ is a possibly incomplete Riemannian manifold of dimension $d$,
 of finite volume $\textup{vol}_g(M)$, with boundary $\partial M$,
and we have used the notation $W$ for a {\em double well potential}, 
e.g.~$W(x) := \frac 1 4(x^2-1)^2$ for $x \in \R$ (this 
can be replaced by any coercive $C^1$ function with 
exactly two minima), 
and the normalization 
$$
\sig:=\int_{-1}^1\sqrt{W(x)/2}\dd x=\frac{\sqrt{2}}{3}.
$$
We minimize among real-valued $u \in H^1(M) \cap L^4(M)$, subject to
Neumann boundary conditions $\pa_\nu u{=}0$  on the boundary $\pa M$ (with normal unit vector field $\nu$) 
and under the mass constraint
\begin{align}\label{mass}
\spr{u}:=\frac 1 {\textup{vol}_g(M)}
\int_M u \dd \textup{vol}_g=m,
\end{align}
where $m\in[-1,1]$ is a \emph{prescribed mass}, often 
set to $m=0$ below. 
Using the Lagrange multiplier $\lam$ we introduce the Lagrangian 
\begin{equation}\label{lagrangian}
L(u,\lam) = E_\eps(u)+\lam\bigg(\spr{u}-m\bigg).
\end{equation}
Then the first variations of $L(u,\lam)$ with respect to $u$ and $\lam$ 
yield the Euler--Lagrange equations as necessary first order minimization conditions, 
\begin{align}\label{ch2}\begin{split}
&{\rm (a)}\quad -\eps^2\Del u+W'(u)-\lam=0 \text{ in }M,\quad 
\pa_\nu u=0 \text{ on }\pa M, \\ 
&{\rm (b)}\quad q(u)=0, \quad\text{ where } q(u):=\spr{u}-m, 
\end{split} \end{align}
where $\Del$ is the (negative)
Laplace--Beltrami operator associated to $(M,g)$. The PDE (a) with $\lam=0$ 
 is also called Allen--Cahn 
equation. The trivial (spatially homogeneous) solution branch of 
\reff{ch2} is $u\equiv m$, with 
Lagrange multiplier $\lam=W'(m)$. 

\subsection{Relation to minimal hypersurfaces} The problem \reff{eemin} is closely related to constant mean curvature surfaces in $M$, i.e.\ surfaces which have minimal area among those satisfying a volume constraint. 
The pure phases $u=\pm 1$ minimize the double well
potential $W$ and hence also 
the energy functional $E_\eps$ with 
the mass constraint $m=\pm 1$, respectively. However, for $|m|\ne 1$ 
these minimizers do not fulfill the mass constraint (\ref{ch2}b), 
and instead we expect 
regions of pure 
phases $u=1$ and $u=-1$ separated by transition regions or 
interfaces with $u\in(-1,1)$. 
The term $\frac \eps 2 |\nabla u|_g^2$ models an interface energy 
density, and for $\eps > 0$ the minimizers $u_\eps$ are  
smooth.  However, from the $\eps$--scaling together with the scaling 
of the potential energy 
$\frac 1 \eps W(u)$ we expect the interfaces to be steep  
and of width $\CO(\eps)$. 
This suggests that suitable sequences of minimizers
$u_\eps$ of $E_\eps$ for $\eps\to 0$ 
converge to a function $u_0$ which only takes values in the \emph{pure 
phases} $u=\pm 1$, and such that the interface 
\begin{align}\label{interface}
I_0=\pa\{u_0{=}-1\} \subset M \backslash \partial M
\end{align}
(we take the boundary 
in $M \backslash \partial M$, so that $\partial M$ is not part of the interface)
has minimal $(d{-}1)$-dimensional volume
among those satisfying the mass constraint \reff{ch2}(b).\footnote{The mass constraint amounts to $\frac{\vol\{u=1\}}{\vol\{u=-1\}} = \frac{1+m}{1-m}$ for $|m|<1$.} 

The above heuristics is proven in \cite{Mo} for domains in Euclidean 
space,  
 with the following precise statement where henceforth 
 we write\medskip
\bci
\item \emph{length} for the $(d{-}1)$-dimensional volume,
\item \emph{area} for the $d$-dimensional volume.
\eci

\begin{theorem}\label{acms} Let $\Omega\subset\R^d$ be a bounded domain with Lipschitz boundary.
Let $(u_\eps)$ be a sequence of minimizers of $E_{\eps}$ with $\eps \to 0$,
subject to $q(u_\eps)=0$ 
with $|m|<1$. Then there exists a subsequence of $(u_\eps)$ 
that converges in $L^1(\Om)$ to a function $u_0$ which only takes values in $\pm 1$, 
with the interface $I_0$ as in \eqref{interface} having minimal length $|I_0|$ among those satisfying the mass constraint. Moreover, for that subsequence
$$
\lim_{\eps \to 0} E_{\eps}(u_\eps) = |I_0|.
$$
\end{theorem}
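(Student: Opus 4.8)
The plan is to deduce this from the Modica--Mortola $\Gamma$-convergence theorem, of which it is the standard consequence. On $L^1(\Om)$ I would introduce the limiting functional
\[
F(u):=\begin{cases}|I_0| & \text{if } u\in BV(\Om;\{\pm1\}),\ \spr u=m,\ I_0=\pa^*\{u{=}-1\}\cap\Om,\\[1mm] +\infty & \text{otherwise},\end{cases}
\]
where $|I_0|$ denotes the $(d{-}1)$-dimensional measure of the reduced boundary (the perimeter of $\{u{=}-1\}$ relative to $\Om$), and extend $E_\eps$ by $+\infty$ off the admissible set $\{u\in H^1\cap L^4:\ \pa_\nu u{=}0,\ q(u){=}0\}$. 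The three ingredients to establish are: \textbf{(i)} equi-coercivity of $(E_\eps)$ in $L^1(\Om)$; \textbf{(ii)} the $\Gamma$-liminf inequality $\Gamma\text{-}\liminf_{\eps\to0}E_\eps\ge F$; \textbf{(iii)} the $\Gamma$-limsup inequality (recovery sequences that respect the mass constraint). By the fundamental theorem of $\Gamma$-convergence these imply at once that a sequence of minimizers $(u_\eps)$ is precompact in $L^1$, that every $L^1$-limit point $u_0$ minimizes $F$ --- which is exactly the minimality of $|I_0|$ among sets of finite perimeter obeying the mass constraint --- and that $E_\eps(u_\eps)\to\min F=|I_0|$; existence of such a $u_0$ then comes for free, since the constraint set is nonempty, $F$ is lower semicontinuous, and its sublevel sets are $L^1$-precompact by compactness of sets of finite perimeter.

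For \textbf{(i)} and the structure of the limit I would use the Modica--Mortola trick. Put $\Phi(t):=\int_0^t\sqrt{W(s)/2}\,\dd s$; since $W\ge0$ vanishes only at $\pm1$, $\Phi:\R\to\R$ is a strictly increasing homeomorphism with $\Phi(1)-\Phi(-1)=\sig$. The pointwise Young inequality $\tfrac\eps2|\nabla u|_g^2+\tfrac1\eps W(u)\ge 2\sqrt{W(u)/2}\,|\nabla u|_g=2|\nabla(\Phi\circ u)|_g$ gives $E_\eps(u)\ge\tfrac1\sig\int_M|\nabla(\Phi\circ u)|_g\,\dd\textup{vol}_g$. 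Hence a bound $E_\eps(u_\eps)\le C$ makes $\Phi\circ u_\eps$ bounded in $BV(\Om)$, so a subsequence converges in $L^1$; simultaneously $\tfrac1\eps\int_\Om W(u_\eps)\le2\sig C$ forces $\int_\Om W(u_\eps)\to0$, so $u_\eps\to u_0$ a.e.\ with $u_0\in\{\pm1\}$ a.e.; combining the two (and the $L^4$ bound, to pass $\Phi^{-1}$ through the limit) gives $u_\eps\to u_0$ in $L^1(\Om)$, with $\Phi\circ u_0=\Phi(1)\mathbf 1_{\{u_0=1\}}+\Phi(-1)\mathbf 1_{\{u_0=-1\}}\in BV$, i.e.\ $\{u_0{=}-1\}$ of finite perimeter; and $\spr{u_0}=m$ since $L^1$-convergence preserves the mean. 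This also yields \textbf{(ii)}: for $u_\eps\to u_0$ in $L^1$ with $\liminf E_\eps(u_\eps)<\infty$, pass to a subsequence realizing the liminf, apply the above, and use lower semicontinuity of total variation under $L^1$-convergence together with $\Phi\circ u_\eps\to\Phi\circ u_0$ in $L^1$ to get $\liminf E_\eps(u_\eps)\ge\tfrac1\sig\int_\Om|D(\Phi\circ u_0)|=\tfrac{\Phi(1)-\Phi(-1)}{\sig}|I_0|=F(u_0)$.

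For \textbf{(iii)} I would first approximate $\{u_0{=}-1\}$ in $L^1$ and in perimeter by open sets with smooth boundary $\Sigma$ transversal to $\pa\Om$, and across such a $\Sigma$ lay down the optimal one-dimensional profile: with $\gamma$ the heteroclinic solution of $\gamma'=\sqrt{2W(\gamma)}$, $\gamma(\mp\infty)=\mp1$, and $\delta$ the signed distance to $\Sigma$, set $u_\eps:=\gamma(\delta/\eps)$ in a shrinking tubular neighbourhood (cut off to $\pm1$ outside). The coarea formula gives $E_\eps(u_\eps)\to\tfrac1{2\sig}|\Sigma|\int_\R\big(\tfrac12|\gamma'|^2+W(\gamma)\big)\,\dd s=\tfrac1\sig|\Sigma|\int_{-1}^1\sqrt{W/2}=|\Sigma|$ and $u_\eps\to u_0$ in $L^1$, and a diagonal argument over the smooth approximations produces a recovery sequence with $\limsup E_\eps(u_\eps)\le F(u_0)$. \textbf{The hard part} is that this sequence only meets $q(u_\eps)=0$ asymptotically; one must restore the constraint exactly --- e.g.\ by translating the layer by $\CO(\eps)$, or by overwriting a ball of radius $r_\eps\to0$ with the appropriate phase --- adjusting the mean by precisely the vanishing deficit while changing $E_\eps$ only by $o(1)$. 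Reconciling the recovery sequence with the exact mass constraint at no asymptotic energy cost is the one genuinely delicate point; everything else is standard, and the Lipschitz hypothesis on $\pa\Om$ enters only through the compact embedding $BV(\Om)\hookrightarrow L^1(\Om)$, the Neumann problem, and the transversal smooth approximation of finite-perimeter sets near the boundary.
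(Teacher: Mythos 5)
Your proposal is correct and follows essentially the same route as the paper: Theorem \ref{acms} is quoted from Modica \cite{Mo}, whose proof is exactly the $\Gamma$-convergence scheme you outline (compactness via the $\Phi\circ u$ / BV trick, the liminf inequality from Young's inequality and lower semicontinuity of total variation, and a recovery sequence built from the one-dimensional heteroclinic profile across a smoothly approximated interface), and the paper's own proof of the generalization in \S\ref{section-proof} and Theorem \ref{thm:1} reproduces the same steps. You also correctly isolate the one delicate point — restoring the exact mass constraint in the recovery sequence at $o(1)$ energy cost — which is handled in \cite{Mo} by precisely the kind of small modification of the interface that you indicate.
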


Results of this type have been extended and refined,  
and have been transferred to Cahn--Hilliard problems on (smooth) Riemannian manifolds, including min--max type results for 
critical points of $E_\eps$ (saddle--points), see, e.g., \cite{GHP03, Ton05, Pa12, GG, BNAP22, HuTo00}. 
A standard setting for this is $\Ga$--convergence, 
already discussed in \cite{Mo}, see \cite[\S 13]{Ri} for a textbook presentation.

\subsection{Main results: a) Convergence of minimizers} 
Our first main result is the transfer of the 
convergence of minimizers results as in Theorem \ref{acms} to the case 
of compact manifolds with boundary and conical singularities, using 
results from geometric measure theory \cite{Morgan, Fe14}. 
These spaces, denoted $\Mbar$, with regular part the Riemannian manifold $(M,g)$, are defined in Appendix \ref{coneHD}, and we show as a combination of Propositions \ref{existence-prop}, 
\ref{regularity-prop} and Theorem \ref{thm:1} the following. 
As before, we fix a mass satisfying $|m|<1$.

\bthm\label{mthm1} 
Let $\Mbar$ be a compact manifold with boundary and  
finitely many conical singularities. Then the following holds
\begin{enumerate}
\item Minimizers $u_\eps$ of $E_{\eps}$ with $q(u_\eps)=0$ and satisfying Neumann boundary conditions at $\partial M$
exist for $\eps>0$ and are strong solutions to the Allen-Cahn equation \eqref{ch2}. 
\item $E_\eps$ $\Gamma$-converges to $E_0$ as $\eps\rightarrow0^+$ with respect to the strong $L^1$-topology, 
where $E_0$ is the perimeter functional, see \reff{E_0def} for the precise 
definition.
In particular, if $(u_\eps)$ is a sequence of such minimizers for $\eps \to 0$, 
and $u_\eps\to u_0$ in $L^1(M)$, then $u_0$ only takes values in $\pm 1$, 
the interface $I_0$ as in \eqref{interface} has minimal length 
$|I_0|$ among the hypersurfaces satisfying the mass constraint, and 
\begin{align}\label{energy-length}
\lim_{\eps \to 0} E_{\eps}(u_\eps) = |I_0|.
\end{align}
\end{enumerate}
\ethm 

\brem\label{ueexrem}{\rm a) The convergence of a subsequence  
$u_\eps\to u_0$ in $L^1(M)$ can be obtained under rather general 
conditions, see \cite[Proposition 3]{Mo} for the Euclidean case, namely if 
(a) $(u_\eps(x))$ is bounded in $L^\infty$, or (b) under 
natural growth conditions for $W$, for instance fulfilled by our 
prototype $W$. The proof from \cite{Mo} transfers 
directly to manifolds as in Thm \ref{mthm1}, and similar for a proof 
of (a) from \cite{GM88} under different assumptions on $W$. 
Thus, in Theorem \ref{mthm1} 
we mainly assume $u_\eps\to u_0$ in $L^1(M)$ for simplicity. 

b) In the smooth case  \reff{energy-length} is obtained in \cite{GG}  also
for sequences $u_\eps$ which have Morse index 
$\ind(u_\eps)=1$ for all $\eps$, i.e., saddle points of the 
energy. 
}\erem 

\subsection{Main results: b) Minimizers on conical surfaces} 

Our second main result studies minimizers of $E_0$ on surfaces with boundary and conical singularities, 
namely in Proposition \ref{prop:tip interface} we prove the following. See Appendix \ref{coneHD} for the definition of the angle of a conical singularity.

\begin{Prop} \label{prop:tip interface-intro}
Let $\Mbar$ be a surface with boundary and a conical singularity $P$ of angle $\al<2\pi$. 
Then any curve of minimal length which divides $\Mbar$ into two parts of prescribed  area ratio does not pass through $P$. 
\end{Prop}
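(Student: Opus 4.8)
The plan is to argue by contradiction, exploiting that a cone point of angle $\al<2\pi$ always forces a ``narrow corner'' in any curve passing through it. So suppose $\gamma$ is a curve of minimal length dividing $\Mbar$ into two regions $A^+$, $A^-$ of the prescribed area ratio, and suppose $P\in\gamma$.

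First I would pin down the shape of $\gamma$ near $P$. By the regularity theory for perimeter minimisers (\cite{Morgan,Fe14}), $\gamma$ is a smooth embedded curve on $M\setminus(\partial M\cup\{P\})$ of finite length, it has no small closed components (by an isoperimetric comparison), and hence for all small $\rho>0$ the set $\gamma\cap B_\rho(P)$ consists of a finite number $2\nu\ge 2$ of arcs running from $P$ to $\partial B_\rho(P)$. These arcs cut $B_\rho(P)$ into $2\nu$ sectors that belong alternately to $A^+$ and $A^-$; writing $\ell_i$ for the length of the arc of $\partial B_\rho(P)$ bounding the $i$-th sector, we have $\sum_i \ell_i = \operatorname{length}\partial B_\rho(P) = \al\rho + o(\rho)$, since near $P$ the metric is modelled on the flat cone $C_\al$ of Appendix~\ref{coneHD} up to lower-order terms. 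Establishing this local picture at the (positively curved) singular point $P$ is the one nontrivial ingredient imported from geometric measure theory, and I expect it to be the main obstacle; everything afterwards is elementary.

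Next comes the geometric heart of the matter. Because there are at least two sectors and $\sum_i\ell_i=\al\rho+o(\rho)<2\pi\rho$, the smallest sector $S$ satisfies $\ell_S\le \al\rho/2 + o(\rho)$, so in the developed cone $S$ subtends an angle $a=\ell_S/\rho + o(1) < \pi$ at $P$. Let $\gamma_1,\gamma_2$ be the two arcs bounding $S$ and $Q_1,Q_2$ their endpoints on $\partial B_\rho(P)$. The sub-arc of $\gamma$ from $Q_1$ through $P$ to $Q_2$ has length $\ge d(Q_1,P)+d(P,Q_2)=2\rho$, whereas (this is where $a<\pi$ enters) the minimising geodesic $\sigma$ from $Q_1$ to $Q_2$ stays inside $S$ and has length $2\rho\sin(a/2)+o(\rho)$. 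Replacing $\gamma_1\cup\gamma_2$ inside $B_\rho(P)$ by $\sigma$ — equivalently, removing from $A^-$ the corner region bounded by $\gamma_1$, $\gamma_2$ and $\sigma$ — produces an admissible dividing set $\gamma'$ with
\[
\operatorname{length}(\gamma') \le \operatorname{length}(\gamma) - 2\rho\bigl(1-\sin(a/2)\bigr) + o(\rho),
\]
while the areas of the two regions change by at most $|B_\rho(P)|=O(\rho^2)$. (Note $\gamma'$ need not avoid $P$; it only has to be admissible and shorter.)

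Finally I would restore the area ratio. Since $\gamma$ separates $\Mbar$ it is nonempty, of positive length, and not contained in $\partial M$, so it has a smooth sub-arc $\Sigma$ in the interior of $M$ away from $P$. A normal perturbation of $\Sigma$ by a fixed smooth bump of height $t$ changes each area to first order in $t$ while changing $\operatorname{length}(\gamma')$ by a comparable amount, so choosing $t$ of size $O(\rho^2)$ restores the prescribed ratio exactly at length cost $O(\rho^2)$; equivalently, one invokes that the isoperimetric profile of $\Mbar$ is locally Lipschitz in the interior. As the gain $2\rho(1-\sin(a/2))>0$ is of order $\rho$ and all errors are $o(\rho)$, for $\rho$ small enough we obtain an admissible competitor strictly shorter than $\gamma$, contradicting minimality; hence $P\notin\gamma$. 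If instead $P\in\partial M$, the same argument applies, with $\gamma$ meeting $\partial M$ orthogonally and the narrow corner taken among the sectors lying in $M$.
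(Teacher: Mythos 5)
Your proposal is correct and follows essentially the same strategy as the paper's proof: cut the corner at $P$ inside a sector of angle less than $\pi$ (which must exist since the total cone angle is less than $2\pi$), gaining length linearly in the cutting scale while disturbing the areas only quadratically, and then restore the prescribed area ratio by an $O(\rho^2)$ perturbation elsewhere. The minor differences (allowing $2\nu\ge 2$ arcs at $P$, the explicit $2\rho\sin(a/2)$ chord estimate, and restoring area via a normal perturbation of a distant sub-arc rather than bulging the curve near $P$) are cosmetic refinements of the same argument.
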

\subsection{Main results: c) Numerical study of minimizers} 

Our third contribution is a numerical study of critical points of $E_\eps$ 
on 2D cones, see Fig.~\ref{f1} for a preview of 
``typical'' solutions on a ``typical'' cone at 
$\eps{=}0.1$. Here we restrict to $m=0$.
Numerically, the problem is 
best considered by continuation and bifurcation: we first fix 
$\eps>0$ and aim to obtain a selection of solutions $u_\eps$ 
at $m=0$, by bifurcation of nonhomogeneous 
solutions $u_\eps$ from the 
homogeneous branch $u\equiv m$ and continuation to $m=0$. 
Subsequently, we consider continuation in $\eps\to 0$, aiming to 
identify the limiting interfaces $I_0$, and to check the formula 
\eqref{energy-length}.%
\footnote{A similar numerical analysis is performed e.g.~in 
\cite[\S6.9 and \S10.1]{p2p} 
over 2D and 3D flat and curved (non-singular) manifolds, also including 
the case of finite Morse index saddle points, for which we numerically 
obtain the same convergence as for mininizers as in Theorem \ref{acms}, 
see also Remark \ref{ueexrem}(b).}\medskip

In addition to the parameters $m$ and $\eps$, in our numerics 
we consider elliptic cones of height $h>0$ with short semi-axis 1 and 
long semi-axis $a\ge 1$.  This yields rather rich bifurcation 
diagrams and different types of interfaces as seen in the numerical 
examples in Fig.~\ref{f1}.

\begin{figure}[h]
\bce
\btab{lll}{{\sm (a) T1}&{\sm (b) T2}&{\sm (c) T3}\\
\hs{-2mm}\ig[width=0.3\tew]{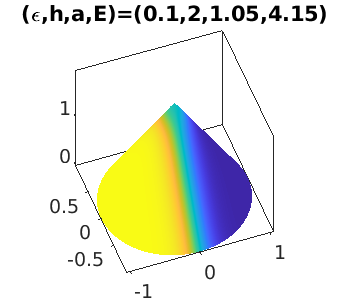}&
\hs{2mm}\ig[width=0.3\tew]{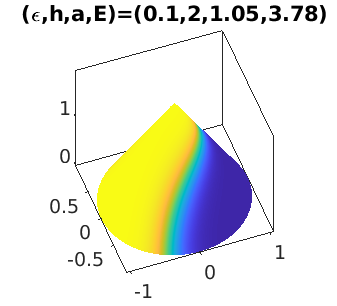}&
\hs{2mm}\ig[width=0.3\tew]{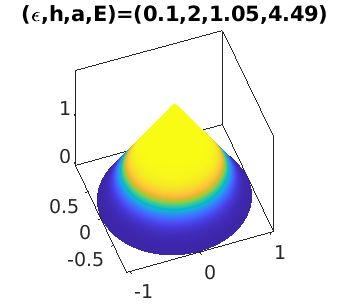}
}
\ece
\vs{-2mm}
\caption{{\small Three basic types T1, T2 and T3 
of interfaces on a
cone of height $h=2$, ellipticity $a=1.05$ (almost circular); 
$\eps=0.1$, energy $E=E_\eps$ as given, approximating the 
interface length. The {\em tip interface} T1 is 
a saddle point here (but for fixed small $\eps>0$ 
becomes a global minimizer on a sufficiently 
elliptic cone); T2 (winding) is the global minimizer, and  
T3 (roughly horizontal) is a local minimizer; see \S\ref{numsec} for 
details.  }\label{f1}}
\end{figure}

\begin{enumerate}
\item \textbf{\emph{Type T1 interfaces} passing through the conical tip (\lq tip-interfaces\rq):}

We find that for any $\eps>0$ and $h>0$ 
there is a (large) 
$a_0\ge 1$ such that for all $a>a_0$ the minimizer of $E_\eps$ shows 
an  interface going 
through the tip of the cone 
along the short semi--axis, i.e., of length $2\sqrt{1+h^2}$. 
For instance Fig.~\ref{f1}(a), at fixed $\eps=0.1$, 
shows such a tip--interface 
$u_\eps$ with 
$\ind(u_\eps)=1$, i.e., a saddle point for $E_\eps$ (since $a=1.05$ is small here).

\item \textbf{\emph{Type T2 interfaces} winding around the conical tip:}

Fig.~\ref{f1} (b) shows a local (and global) minimizer of type T2.

\item \textbf{\emph{Type T3 interfaces} running horizontally around the tip:}

Fig.~\ref{f1} (c) shows another saddle point of 
type T3 with a roughly ``horizontal'' interface, and we expect 
T3 solutions to become minimizers at large $h$ (for small $\eps>0$, 
and also in the limit $\eps\to 0$). 
\end{enumerate}

In \S\ref{numsec} we present the numerical computations partly 
previewed in Fig.\ref{f1}. Naturally, the limit $\eps{\to}0$ 
is of particular interest for sequences of tip--interfaces, while 
for (sequences of) interfaces which avoid the tip 
as in Fig.~\ref{f1}(b,c) we are essentially back to the non--singular case. 

The angles of our elliptic cones are always less than $2\pi$, so tip--interfaces are {\em never} 
minimizers at $\eps=0$ by Proposition \ref{prop:tip interface-intro}. Therefore, in the numerical continuation 
in $\eps\to 0$ 
for T1 interfaces which {\em are} minimizers at some starting $\eps_0>0$ 
we find an $0<\eps_1<\eps_0$ such that at $\eps_1$ a branch of 
minimizing T2 interfaces bifurcates from the T1 branch. 
Nevertheless, the sequence of tip--interfaces, 
unstable at sufficently small $\eps>0$, 
converges for $\eps\to 0$ to the expected limit (tip) interface and 
\reff{energy-length} holds, i.e., we believe that Remark \ref{ueexrem}(b) 
also  holds in our case of manifolds with conical singularities. We can phrase
it as a conjecture.

\begin{conj}\reff{energy-length} also holds for sequences of
saddle points of energy, converging to an interface $I_0$, passing possibly
through the conical singularity. 
\end{conj}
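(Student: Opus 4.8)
The plan is to reduce \reff{energy-length} for index-bounded saddle points to two ingredients: the now-standard diffuse-to-sharp interface theory on the \emph{regular} part $M$, which already underlies the smooth-manifold statement invoked in Remark~\ref{ueexrem}(b) and \cite{GG}, and a dedicated analysis at the conical point $P$. On $M\setminus\{P\}$ I expect the argument to be essentially the known one for closed manifolds; the genuinely new question is whether, when $I_0$ runs through the tip, the diffuse energy can concentrate at $P$ in excess of the length that $I_0$ contributes there. The whole proof hinges on excluding such a tip atom, and everything else follows the familiar machinery.

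First I would fix the framework. Each $u_\eps$ solves the Allen--Cahn equation \reff{ch2}(a) with Lagrange multiplier $\lam_\eps$ and Neumann conditions, and I assume a uniform energy bound $\sup_\eps E_\eps(u_\eps)<\infty$ together with a uniform Morse-index bound $\sup_\eps\ind(u_\eps)<\infty$ (automatic for the index-one saddles of Remark~\ref{ueexrem}(b)). Interior elliptic estimates on the regular part together with the energy bound give that $\lam_\eps$ is bounded, so along a subsequence $\lam_\eps\to\lam_0$; accordingly the limit interface will be a curve of constant geodesic curvature (minimal when $\lam_0=0$). I then introduce the energy measures $\mu_\eps=\frac1{2\sig}\big(\frac\eps2|\nabla u_\eps|_g^2+\frac1\eps W(u_\eps)\big)\dd\vol_g$ and the associated varifolds $V_\eps$, and extract weak-$*$ limits $\mu_\eps\rightharpoonup\mu$ and $V_\eps\to V$.

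On any fixed compact $K\Subset M\setminus\{P\}$ all the needed tools are local and live on a smooth Riemannian piece of bounded geometry: the Allen--Cahn monotonicity formula and the resulting density bounds, the vanishing of the discrepancy measure in the limit \cite{Ton05}, stationarity of $V$ for the (constant-mean-curvature) first variation, integrality \cite{HuTo00}, and --- crucially using the index bound --- stability away from finitely many points, hence multiplicity one by sheeting/regularity (Wickramasekera; in the surface case of our numerics, Chodosh--Mantoulidis). Since the index bound is global there are only finitely many such points and they cannot cluster, so these conclusions hold on all of $M\setminus\{P\}$; in particular $E_\eps(u_\eps;K)\to|I_0\cap K|$ for every such $K$, i.e.\ \reff{energy-length} holds ``up to the tip.''

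It remains to show that $\limsup_\eps E_\eps(u_\eps;B_\rho(P))\to 0$ as $\rho\to0$, which matches $|I_0\cap B_\rho(P)|\to0$ since $I_0$ is $1$-rectifiable and so carries vanishing length in balls shrinking to $P$; equivalently, $\mu$ must have no atom at $P$. Here I would exploit that near a conical singularity the metric is (asymptotically) the exact metric cone $dr^2+r^2\,d\theta^2$, which is flat away from $P$ and self-similar under $r\mapsto r/\rho$. This self-similarity should yield a monotonicity formula centered at $P$ for the scale-normalized energy $r^{-(d-1)}\mu_\eps(B_r(P))$, monotone up to errors controlled by the discrepancy, by $\lam_\eps$, and by the deviation of $g$ from the exact cone. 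Letting $\eps\to0$ would give the upper density bound $\mu(B_r(P))\le C\,r^{d-1}$, hence $\mu(\{P\})=0$; upper semicontinuity of mass on compacts under weak-$*$ convergence then yields $\limsup_\eps E_\eps(u_\eps;\overline{B_\rho(P)})\le\mu(\overline{B_\rho(P)})\to0$, and combining with the regular-part limit gives $\lim_\eps E_\eps(u_\eps)=|I_0|$. The main obstacle is precisely this tip estimate: the regularity and sheeting theorems quoted above presuppose bounded ambient geometry, which degenerates at $P$ (the Gauss curvature concentrates there), so they cannot be invoked at the tip, and one must instead establish the cone-adapted monotonicity for the \emph{diffuse} energy --- carrying the $\eps$-dependent discrepancy and the CMC error through the self-similar scaling --- and, if the tangent cone at $P$ is to be pinned to unit-multiplicity rays rather than folded higher-multiplicity sheets, reinforce it with an index-versus-multiplicity argument localized at the conical point. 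I expect this cone monotonicity and the accompanying exclusion of concentrated energy at $P$ to be the crux of the proof.
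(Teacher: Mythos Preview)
The paper does \emph{not} prove this statement: it is explicitly labeled a Conjecture and is left open, motivated only by the numerical evidence of \S\ref{numsec} (in particular the blue T1 branch in Fig.~\ref{f5b} and the discussion in Remark~\ref{sirem}). There is therefore no proof in the paper to compare against.

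Your proposal is a coherent research outline, and it correctly isolates the genuine difficulty. The decomposition into (i) the known diffuse-to-sharp interface theory on compacta $K\Subset M\setminus\{P\}$ (Hutchinson--Tonegawa integrality, Tonegawa's discrepancy vanishing, index-controlled multiplicity one as in \cite{GG}) and (ii) a separate tip analysis is the natural one, and your observation that everything on the regular part is local and hence insensitive to the singularity is right. You are also honest that the whole argument rests on excluding an energy atom at $P$ via a cone-adapted monotonicity formula for the diffuse energy, and that this is precisely what is \emph{not} available: the standard Allen--Cahn monotonicity formula uses bounded ambient curvature, which fails at $P$. So what you have written is a plausible strategy, not a proof; the step you flag as ``the crux'' is the entire new content, and you have not supplied it.

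Two further points. First, you add a uniform Morse-index bound as a standing hypothesis; the Conjecture as stated does not assume this (it speaks only of ``saddle points''), although in view of Remark~\ref{ueexrem}(b) and the numerics the index-one case is clearly the intended test case. Without an index bound the multiplicity-one step already fails on smooth manifolds, so this strengthening is essential to your approach and should be made explicit. Second, even granting the tip-monotonicity and hence $\mu(\{P\})=0$, you still need multiplicity one \emph{globally} to conclude $\mu=\cH^1\llcorner I_0$ rather than $k\,\cH^1\llcorner I_0$ for some integer $k\ge1$; your appeal to sheeting handles this away from $P$, but if $I_0$ passes through $P$ you must also rule out higher multiplicity of the limit varifold \emph{along} $I_0$ near the tip, where the sheeting theorems you cite do not directly apply. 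Your last sentence gestures at this (``folded higher-multiplicity sheets''), but it is a second genuine gap alongside the monotonicity.
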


Additionally, our numerics suggest that for the limit $\eps\to 0$ 
the main influence of the tip is that for tip--interfaces 
the convergence in \reff{energy-length} is {\em slower} than 
for  interfaces which avoid the tip. Correspondingly, 
 level-lines such as $u_\eps=\pm \frac 1 2$ at small finite $\eps>0$ 
keep a larger distance from the interface $u_\eps=0$ near the tip than 
in smooth parts of the cones. Moreover, this effect becomes stronger for 
more pointed cones, i.e., can be seen as a measure of 
the strength of the singularity.

\subsection{Some related problems}
By Cahn--Hilliard {\em problem} we denote the elliptic equation (\ref{ch2}a) 
together with the mass constraint (\ref{ch2}b). The dynamic Cahn--Hilliard {\em equation} (in Euclidean space) 
is the mass conserving flow 
\huga{\label{ch4}
\pa_t u=\nabla\cdot[\nabla\del_u E_\eps(u)]=-\Delta[\eps^2\Delta u-W'(u)], 
}
also called $H^{-1}$ gradient flow, 
where $\del_u$ denotes the variational derivative. 
For zero-flux boundary conditions, i.e., $\pa_n u=\pa_n\Delta u=0$ on $\pa\Om$, 
this conserves the mass $\int_\Om u\dd x$, and steady states of \reff{ch4} 
fulfill $\eps^2\Delta u-W'(u)=\lam$ for some $\lam\in\R$ and hence 
are solutions of \reff{ch2}.  

See \cite{Ell89} for basic results 
on existence of solutions of \reff{ch4} (in 2D flat domains), 
their numerical approximation, 
and their basic dynamical behavior, which can roughly be characterized as 
follows: Starting from essentially random initial data (with mass $0$), 
the solution rapidly evolves to a fine grained structure with complex 
interfaces between the phases $u=\pm 1$, also aptly called ``fat spaghettis''. 
After this initial phase, a slow coarsening process sets in, during 
which interfaces move and disappear (regions of pure phases $u\approx 
1$ or $u\approx -1$ coming together), on longer and longer time scales. 
See also  \cite{Mira19, DF20} for comprehensive reviews of 
other Cahn--Hilliard type 
equations used to describe diffusive interfaces in a variety of 
settings and applications, and of their analytical and numerical treatment. 

The parabolic Allen-Cahn equation is given by
$$
\partial_t u -\eps^2\Del u+W'(u) = 0, 
$$
again with Neumann boundary conditions at $\partial M$. 
For small $\eps$, the level sets of $u$ concentrate around an interface that
evolves in time under a generalized mean curvature flow \cite{ESS92}. However, 
the mass is in general not conserved. This holds for 
$
\partial_t u -\eps^2\Del u+(W'(u)-\spr{W'(u)}) = 0, 
$
for which solutions converge to a volume preserving mean curvature flow 
\cite{CHL10}. 
It should be interesting to transfer such results to the case 
of manifolds with conical singularities too. See 
\cite{RS13, BV16} for well--posedness results for the Allen--Cahn 
and Cahn--Hilliard 
equations on singular manifolds. 

\subsection{Structure of the paper} 

In \S \ref{section-proof} we prove Theorem \ref{mthm1}; in \S \ref{Danielsec}
we study length-minimizing area halving--curves
on surfaces with conical singularities and prove 
Proposition \ref{prop:tip interface-intro};
in \S\ref{numsec} we perform numerical bifurcation analysis on 
cones at finite $\eps>0$, and then let $\eps\to 0$. 
In  Appendix \ref{section-appendix} 
we collect some auxiliary analytical results needed 
for the proof of Thm \ref{mthm1}, and in Appendix \ref{coneHD} we collect some basic facts on conical singularities. \medskip

\noindent \emph{Acknowledgments:} The authors thank Marco Guaraco for valuable 
comments.

\section{Convergence of minimizers on singular spaces}\label{section-proof}

Throughout, let $\Mbar$ be a compact manifold with boundary and  finitely many conical singularities, and denote by $M$ its regular part, with Riemannian metric $g$, see Definition \ref{def:mfd con}.

\subsection{Existence and regularity of solutions}

The results of this subsection in fact hold for general incomplete 
Riemannian manifolds $(M,g)$ of finite volume. 
This goes well beyond compact manifolds with boundary and conical singularities. 
Note that, when we talk about manifolds with boundary here then the letter $M$ denotes the interior, not including the boundary.

However, all the other subsections require the singularities to be conical and the space to be compact.

\subsubsection{Self-adjoint extensions of the Laplacian}
Consider the gradient $\nabla$ on $(M,g)$, mapping smooth functions $C^\infty_0(M)$
to smooth sections of the tangent bundle $C^\infty_0(M,TM)$. We write $\Delta = - \nabla^t \nabla$ for the (negative)
Laplace Beltrami operator. If $M$ is a closed compact manifold, then the usual Sobolev spaces
\begin{align*}
&H^1(M):= \{u \in L^2(M) \mid \nabla u \in L^2(TM)\}, \\
&H^2(M):=\{u \in H^1(M) \mid \nabla^2 u \in L^2(\Lambda^2 TM)\},
\end{align*}
define unique closed, and in the latter case unique self-adjoint, extensions of 
$\nabla$ and $\Delta$, respectively. If $(M,g)$ is non-compact, the extensions may not longer
be unique and we shall now employ the formalism of minimal and maximal 
extensions, as in the seminal work by Br\"uning-Lesch \cite{Hilbert-complexes}. 

The maximal and minimal closed (with respect to the graph norm)
extensions $\nabla_{\max}, \nabla_{\min}$ are defined by the respective domains
\begin{equation}\label{minmax-def}\begin{split}
&\mathscr{D}(\nabla_{\max}) := \{u \in L^2(M) \mid \nabla u \in L^2(TM)\} = H^1(M), \\
&\mathscr{D}(\nabla_{\min}) := \{u \in \mathscr{D}(\nabla_{\max}) \mid \exists \{u_n\}_n \subset C^\infty_0(M): 
\\ & \qquad  \qquad \qquad u_n \xrightarrow{n\to \infty} u, \nabla u_n  \xrightarrow{n\to \infty} \nabla u \ \textup{in} \ L^2\}.
\end{split}\end{equation}
The two extensions define ideal boundary conditions in the sense of Cheeger \cite{Che2} and Br\"uning-Lesch \cite{Hilbert-complexes} and yield
self adjoint extensions of the Laplace Beltrami operator on $(M,g)$
\begin{equation}\begin{split}
&\Delta_D := - \nabla^*_{\min}\nabla_{\min}, \quad \mathscr{D}(\Delta_D) = \mathscr{D} (\nabla^*_{\min}\nabla_{\min}), \\
&\Delta_N := - \nabla^*_{\max}\nabla_{\max}, \quad \mathscr{D}(\Delta_N) = \mathscr{D} (\nabla^*_{\max}\nabla_{\max}).
\end{split}\end{equation}
The two extensions define the well-known Dirichlet and Neumann boundary conditions in case  $M$ is the interior of a compact Riemannian manifold with boundary, $\Mbar=M\cup \partial M$. We shall be precise: the trace theorem asserts that the obvious restriction, 
mapping any $u \in C^\infty(\Mbar)$ to $u|_{\partial M} \in C^\infty(\partial M)$
admits a continuous extension 
\begin{align}\label{trace-mapping}
\textup{tr}: \mathscr{D}(\nabla_{\max}) \to L^2(\partial M),
\end{align}
where continuity holds with respect to the graph norm on $\mathscr{D}(\nabla_{\max})$
and the $L^2$ space on the right is defined with respect to the restriction of $g$ to $\partial M$.
Similarly, if $\partial_\nu$ is the unit normal inward vector field on $\partial M$, 
extended smoothly to the interior, then $u \mapsto \partial_\nu u \restriction \partial M$
admits a continuous extension 
\begin{align}
\textup{tr} \circ \partial_\nu: \mathscr{D}(\nabla_{\max}) \to H^{-1/2}(\partial M).
\end{align}
By continuity of the trace we observe
\begin{equation}
 \mathscr{D}(\nabla_{\min}) \subseteq \{ u \in \mathscr{D}(\nabla_{\max}) = H^1(M) \mid \textup{tr} \, u = 0\} =: H^1_0(M).
\end{equation}
The following result is observed in \cite[\S 4]{Hilbert-complexes}. 
If $\Mbar$ is  non-compact, e.g.\ it has 'interior singularities', and if $\phi \in C^\infty(\Mbar)$ is compactly supported, i.e.\ supported  \emph{away} from the singularities in the interior, then
\begin{equation}\label{DNH}\begin{split}
& \phi \mathscr{D}(\Delta_D) = \phi \{ u \in H^2(M) \mid \textup{tr} \, u = 0\}, \\
& \phi \mathscr{D}(\Delta_N) = \phi \{ u \in H^2(M) \mid \textup{tr} \circ \partial_\nu \, u = 0\}.
\end{split}\end{equation}
This motivates the choice of the subscripts D and N that stand for Dirichlet and Neumann 
boundary conditions. 

\brem\label{laprem}{\rm See Appendix \ref{coneHD} for further comments 
on $\mathscr{D}(\Delta_D)$ and $\mathscr{D}(\Delta_N)$ in the conical case.}
\erem

\subsubsection{Existence of minimizers}

We begin with the existence of minimizers. For coercive 
functionals on $\R^d$, this is a classical result, see 
e.g.~\cite[Chapter 8]{evans}. The
proof presented here holds on any finite volume Riemannian manifold,
possibly incomplete, including those with conical singularities.
\begin{Prop}\label{existence-prop}
Consider the 
function spaces
\begin{equation}\begin{split}
&\mathscr{A}_D:= \{ u \in \mathscr{D}(\nabla_{\min}) \cap L^4(M) \mid \spr{u} = m\}, \\
&\mathscr{A}_N:= \{ u \in \mathscr{D}(\nabla_{\max}) \cap L^4(M) \mid \spr{u} = m\}.
\end{split}
\end{equation}
Then for any fixed $\eps > 0$ there exist minimizers $u^\eps_D \in \mathscr{A}_D$ and $u^\eps_N \in \mathscr{A}_N$ such that 
\begin{align}
E_\eps(u^\eps_D) = \inf_{u \in \mathscr{A}_D} E_\eps(u), \quad 
E_\eps(u^\eps_N) = \inf_{u \in \mathscr{A}_N} E_\eps(u).
\end{align}
\end{Prop}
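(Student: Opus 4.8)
The plan is to use the direct method of the calculus of variations. First I would show that the infimum is finite and that minimizing sequences are bounded in the appropriate space. Since $W \geq 0$ and $W$ is coercive (say $W(x) \geq c|x|^4 - C$ for the prototype potential), the functional $E_\eps(u) = \frac{1}{2\sig}\int_M \frac{\eps}{2}|\nabla u|_g^2 + \frac{1}{\eps}W(u)\, \textup{vol}_g$ is bounded below by $0$, so $\inf_{u\in\mathscr{A}_N} E_\eps(u) =: I_N \geq 0$ is finite (it is not $-\infty$; finiteness of the inf as opposed to $+\infty$ follows by testing with the constant $u\equiv m$, which lies in both $\mathscr{A}_D$ and $\mathscr{A}_N$ since $\spr{m}=m$ and $W(m)<\infty$). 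Take a minimizing sequence $(u_n)$ with $E_\eps(u_n) \to I_N$. Then $\int_M |\nabla u_n|_g^2$ is bounded (by $\frac{4\sig}{\eps}(I_N+1)$ eventually) and $\int_M W(u_n)$ is bounded, hence by coercivity of $W$ the sequence is bounded in $L^4(M)$, and in particular in $L^2(M)$ since $M$ has finite volume. Therefore $(u_n)$ is bounded in $\mathscr{D}(\nabla_{\max}) = H^1(M)$ with respect to the graph norm, and also bounded in $L^4(M)$.

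Next I would extract weak limits. By reflexivity of $L^2(M)$ and $L^4(M)$, pass to a subsequence with $u_n \rightharpoonup u$ weakly in $L^2(M)$, $\nabla u_n \rightharpoonup v$ weakly in $L^2(TM)$, and $u_n \rightharpoonup u$ weakly in $L^4(M)$. One checks $v = \nabla u$ in the distributional sense by testing against $C^\infty_0(M)$ sections and using weak convergence, so $u \in H^1(M) = \mathscr{D}(\nabla_{\max})$ with $\nabla u_n \rightharpoonup \nabla u$. The mass constraint passes to the limit: $\spr{u_n} = m$ and $u_n \rightharpoonup u$ in $L^2(M)$ with $1 \in L^2(M)$ (finite volume) gives $\spr{u} = m$, so $u \in \mathscr{A}_N$. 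For the Dirichlet case, the additional point is that $\mathscr{D}(\nabla_{\min})$ is weakly closed: it is a closed subspace of the Hilbert space $\mathscr{D}(\nabla_{\max})$ (closed in the graph norm by its definition as a graph-norm closure of $C^\infty_0(M)$), hence weakly closed, so $u_n \in \mathscr{D}(\nabla_{\min})$ forces $u \in \mathscr{D}(\nabla_{\min})$; combined with $\spr{u}=m$ and $u\in L^4$ this gives $u \in \mathscr{A}_D$.

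Finally, lower semicontinuity. The gradient term $u \mapsto \int_M |\nabla u|_g^2$ is convex and strongly continuous on $L^2(TM)$, hence weakly lower semicontinuous, so $\int_M |\nabla u|_g^2 \leq \liminf \int_M |\nabla u_n|_g^2$. The potential term is the delicate part: $W$ is not convex (it is a double well), so one cannot argue by convexity. Instead I would use that $W \geq 0$ is continuous, invoke the Fatou-type / Ioffe lower semicontinuity theorem for integral functionals $u \mapsto \int_M W(u)\,\textup{vol}_g$ with nonnegative continuous integrand — or, more elementarily, pass to a further subsequence with $u_n \to u$ almost everywhere (Rellich-type compactness; on a manifold with conical singularities one has local compact embedding $H^1_{\loc} \hookrightarrow L^2_{\loc}$ away from the tips, and the tips form a null set, so boundedness in $H^1(M) \cap L^4(M)$ yields a.e.-convergent subsequence, e.g. via a countable exhaustion by relatively compact smooth charts and a diagonal argument) and then apply Fatou's lemma to $W(u_n) \geq 0$ to get $\int_M W(u) \leq \liminf \int_M W(u_n)$. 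Adding the two estimates, $E_\eps(u) \leq \liminf E_\eps(u_n) = I_N$, and since $u \in \mathscr{A}_N$ we conclude $E_\eps(u) = I_N$, so $u_N^\eps := u$ is a minimizer; the Dirichlet case is identical with $\mathscr{A}_N$ replaced by $\mathscr{A}_D$. The main obstacle is the weak lower semicontinuity of the nonconvex potential term, which is why the a.e.-convergence extraction via local compactness away from the (measure-zero) conical set is the key technical ingredient; everything else is the standard direct method.
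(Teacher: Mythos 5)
Your proposal is correct and follows the same overall skeleton as the paper's proof: the direct method, with boundedness of a minimizing sequence in the graph norm of $\mathscr{D}(\nabla_{\min})$ (resp.\ $\mathscr{D}(\nabla_{\max})$) plus $L^4$, extraction of a weak limit via Banach--Alaoglu/reflexivity, preservation of the mass constraint and of membership in the (norm-closed, hence weakly closed) subspace $\mathscr{D}(\nabla_{\min})$, and weak lower semicontinuity of the two energy terms. The one genuinely different step is the potential term. The paper rewrites $\int_M W(u_n)=\tfrac14\|u_n^2-1\|_{L^2}^2$, applies Banach--Alaoglu to the bounded sequence $(u_n^2-1)$ in $L^2(M)$, identifies its weak limit as $u_*^2-1$, and then uses weak lower semicontinuity of the $L^2$-norm; you instead extract an a.e.-convergent subsequence via local Rellich compactness on an exhaustion of the regular part (the singular set being null) and apply Fatou to $W(u_n)\ge 0$. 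Your route is the more robust one: identifying the weak $L^2$-limit of $u_n^2-1$ with $u_*^2-1$ is precisely the point that needs pointwise a.e.\ convergence (weak convergence of $u_n$ alone does not give it), so the compactness-plus-Fatou argument you supply is in effect the justification of the identification the paper asserts; it also works for any continuous $W\ge 0$, not only the quartic prototype. One small slip: for the Dirichlet class $\mathscr{A}_D$ the constant $u\equiv m$ is \emph{not} admissible when $\partial M\neq\emptyset$ and $m\neq 0$, since $\mathscr{D}(\nabla_{\min})\subseteq H^1_0(M)$ forces vanishing trace; to see that $\inf_{\mathscr{A}_D}E_\eps<\infty$, test instead with a rescaled $\phi\in C_0^\infty(M)$ of mean $m$.
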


\begin{proof}
We prove existence of a minimizer $u_D \equiv u_D^\eps \in \mathscr{A}_D$. The argument 
for the other case is exactly the same, with $u^\eps_D, \nabla_{\min}, \mathscr{A}_D$
replaced by $u^\eps_N, \nabla_{\max}, \mathscr{A}_N$, respectively. 
First, consider a sequence $(u_n) \subset \mathscr{A}_D$ such that 
$$
\inf_{u \in \mathscr{A}_D} E_\eps(u) = \lim_{n\to \infty} E_\eps(u_n).
$$
We want to use the Banach-Alaoglu theorem which states that in every 
reflexive Banach space, every bounded sequence admits a weakly convergent subsequence.
First, $\mathscr{D}(\nabla_{\min})$ is indeed a reflexive Banach space: 
the map $u \mapsto (u, \nabla u)$ identifies $\mathscr{D}(\nabla_{\min})$
with a closed subspace of $L^2(M) \times L^2(M,TM)$; the latter is 
a reflexive Banach space and thus any closed subspace, such as $\mathscr{D}(\nabla_{\min})$, is a reflexive 
Banach space as well. 

Second, $(u_n) \subset \mathscr{D}(\nabla_{\min})$ is indeed a bounded 
sequence: $E_\eps(u) \geq 0$ for any $u \in \mathscr{D}(\nabla_{\min}) \cap L^4(M)$ and hence
the infimum of $E_\eps(u)$ over $u \in \mathscr{A}_D$ is finite; therefore $(E_\eps(u_n))_n$ has finite limit and in particular
is uniformly bounded. This shows boundedness of $(u_n) \subset \mathscr{D}(\nabla_{\min})$
in the graph norm, where we have used that the $L^2$-norm is bounded by the $L^4$-norm on manifolds 
of finite volume. 

Thus by the Banach-Alaoglu theorem, there exists a subsequence 
$(u_{n_k}) \subset \mathscr{D}(\nabla_{\min})$, which is weakly convergent, i.e. 
there exists $u_* \in \mathscr{D}(\nabla_{\min})$ such that for any $\phi \in L^2(M)$ and $\psi \in L^2(M,TM)$
\begin{equation}\begin{split}
\int_M u_{n_k} \phi \xrightarrow{k\to \infty} \int_M u_* \phi \quad 
\int_M g\bigl( \nabla u_{n_k}; \psi \bigr) \xrightarrow{k\to \infty} \int_M g\bigl( \nabla u_*; \psi \bigr).
\end{split}
\end{equation}
Since $(u^2_{n_k}-1) \subset L^2(M)$ is a bounded sequence in a reflexive 
Banach space as well, we can apply Banach-Alaoglu again and assume by passing to a 
subsequence, that 
$$
\int_M (u^2_{n_k}-1) \phi \xrightarrow{k\to \infty} \int_M (u_*^2-1) \phi.
$$
Taking the test function $\phi \equiv 1$, we find that weak convergence preserves the mass
constraint \eqref{mass}, i.e. $\spr{u} = m$. Since $L^2$-norms are weakly lower semi-continuous, we conclude 
\begin{equation}\label{lsc}\begin{split}
&\| \nabla u_*\|_{L^2} \leq \liminf_{k\to \infty} \| \nabla u_{n_k} \|_{L^2}, \\
&\| u_*^2 - 1\|_{L^2} \leq \liminf_{k\to \infty} \| u^2_{n_k} - 1 \|_{L^2}.
\end{split}
\end{equation}
We conclude that $u_* \in \mathscr{A}_D$ is indeed a minimizer, since by \eqref{lsc}
$$
E_\eps(u_*) = \frac{\eps}{4 \sigma} \| \nabla u_*\|_{L^2} +  \frac{1}{4} \| u_*^2-1\|_{L^2} 
\leq \liminf_{k\to \infty} E_\eps(u_{n_k}) = \inf_{u \in \mathscr{A}_D} E_\eps(u).
$$
\end{proof}

The same result holds if we impose additional, e.g.~generalized Neumann boundary
conditions in the definition of $\mathscr{A}_N$, since the trace operator $\textup{tr}$ in \eqref{trace-mapping}
is continuous in the operator norms and in particular weakly continuous. Thus, weak convergence 
preserves boundary conditions.

The existence in Proposition \ref{existence-prop} is sufficient to 
proceed with the convergence of $u_\eps$, and also to justify 
the numerics in \S\ref{numsec}. Thus, rather for completeness 
our next result shows that minimizers are in fact strong solutions of the Allen-Cahn equation. 
We do not proceed as in Evans \cite[\S 8]{evans}, but rather present an approach 
adapted to the present possibly singular setting. Consider the Lagrangian $L(u,\lambda)$ as in \eqref{lagrangian}
and note that a minimizer $u$ of $L(u,\lambda)$ satisfies
\begin{align}\label{weak-L}
\left. \frac{d}{ds} \right|_{s=0} L(u+s\phi, \lambda) \equiv 
\eps^2 \int_M g(\nabla u, \nabla \phi) + \int_M W'(u) \phi - \int_M \lambda \phi = 0,
\end{align}
where we vary among $\phi \in \mathscr{D}(\nabla_{\min})$ if $u \in \mathscr{A}_D$, and 
$\phi \in \mathscr{D}(\nabla_{\max})$ if $u \in \mathscr{A}_N$.
In other words, $u$ in $\mathscr{A}_D$ or $\mathscr{A}_N$ is a weak solution to the Allen-Cahn equation \eqref{ch2}, 
with the test functions $\phi$ lying in $\mathscr{D}(\nabla_{\min})$ or $\mathscr{D}(\nabla_{\max})$, respectively. 
Note that the intersection with $L^4(M)$ is no longer needed for the test functions. 

\begin{Prop}\label{regularity-prop} Consider any $u \in H^1(M) \cap L^4(M)$.
\begin{enumerate}
\item Let $u$ be a weak solution to the Allen-Cahn equation \eqref{ch2}, 
with test functions $\phi \in \mathscr{D}(\ \nabla_{\min}) \subseteq H^1_0(M)$. Then $u \in \mathscr{D}(\nabla_{\min}^* \nabla_{\min})$
and in particular, $\textup{tr} \, u = 0$.
\item Let $u$ be a weak solution to the Allen-Cahn equation \eqref{ch2}, 
with test functions $\phi \in \mathscr{D}(\nabla_{\max}) \equiv H^1(M)$. Then $u \in \mathscr{D}(\nabla_{\max}^* \nabla_{\max})$
and in particular, $\textup{tr} \circ \partial_\nu \, u = 0$.
\end{enumerate}
\end{Prop}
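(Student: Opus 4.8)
The plan is to show that the weak solution $u$ lies in the domain of the appropriate self-adjoint Laplacian by bootstrapping the defining identity \eqref{weak-L}. The key observation is that \eqref{weak-L} rewrites as
\begin{equation*}
\eps^2 \int_M g(\nabla u, \nabla \phi) = \int_M \bigl(\lambda - W'(u)\bigr)\phi
\end{equation*}
for all test functions $\phi$ in $\mathscr{D}(\nabla_{\min})$ (case 1) or $\mathscr{D}(\nabla_{\max})$ (case 2). First I would verify that the right-hand side defines a bounded linear functional of $\phi \in L^2(M)$: since $u \in L^4(M)$ and $W'(u) = u^3 - u$ for the prototype potential, we have $W'(u) \in L^{4/3}(M) \subset L^2(M)$ once we use $|W'(u)| \lesssim |u|^3 + |u|$ together with finiteness of the volume — more precisely $u^3 \in L^{4/3}$, and $L^{4/3} \hookrightarrow L^1$; to land in $L^2$ one uses that on a finite-volume manifold $L^4 \hookrightarrow L^2$ controls the linear term, while for the cubic term one needs $u \in L^6$, which does not follow for free. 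The cleaner route, which I would take, is to observe that $\lambda - W'(u) \in L^{4/3}(M) \subseteq (L^4(M))'$ and test only against $\phi \in \mathscr{D}(\nabla_{\min}) \cap L^4(M)$ first, but since eventually we want $\phi$ ranging over all of $\mathscr{D}(\nabla_{\min})$ we instead argue: the identity says $\nabla u$, viewed in $L^2(M,TM)$, satisfies $\langle \nabla_{\min} u, \nabla_{\min}\phi\rangle = \eps^{-2}\langle f, \phi\rangle$ with $f := \lambda - W'(u)$, and this is precisely the statement that $u \in \mathscr{D}(\nabla_{\min}^*\nabla_{\min})$ with $\nabla_{\min}^*\nabla_{\min} u = -\eps^{-2} f$ — provided $f \in L^2(M)$.

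So the crux reduces to the regularity claim $W'(u) \in L^2(M)$, which does \emph{not} follow merely from $u \in H^1(M) \cap L^4(M)$ in dimension $d \geq 3$. Here I expect the argument to use elliptic bootstrapping: from $u \in \mathscr{D}(\nabla_{\min}^*\nabla_{\min})$ with right-hand side in $L^{4/3}$ (which \emph{does} hold: $u^3 \in L^{4/3}$ since $u \in L^4$, and the linear term is even better), interior elliptic regularity for the Laplace--Beltrami operator on the regular part $M$ — which is a genuine smooth Riemannian manifold away from the singularities and $\partial M$ — gives $u \in W^{2,4/3}_{\loc}$, hence by Sobolev embedding $u \in L^{q}_{\loc}$ for a larger exponent $q$, and iterating raises integrability until $u^3 \in L^2_{\loc}$; combined with the already-known global $H^1 \cap L^4$ control and the fact that the bad set (singularities $\cup\, \partial M$) is handled by the self-adjoint extension framework rather than by local regularity, one upgrades to $W'(u) \in L^2(M)$ globally. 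Once that is in hand, $u \in \mathscr{D}(\Delta_D)$ (resp.\ $\mathscr{D}(\Delta_N)$) is immediate from the definition of these domains as $\mathscr{D}(\nabla_{\min}^*\nabla_{\min})$ (resp.\ $\nabla_{\max}^*\nabla_{\max}$), and the trace statements $\textup{tr}\, u = 0$ (resp.\ $\textup{tr}\circ\partial_\nu\, u = 0$) follow from the inclusions $\mathscr{D}(\nabla_{\min}) \subseteq H^1_0(M)$ and from \eqref{DNH} applied with a cutoff $\phi$ localizing near $\partial M$, since near $\partial M$ the space is a manifold with smooth boundary where the classical characterization \eqref{DNH} of the two extensions applies.

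The main obstacle is the integrability bootstrap for $W'(u)$: one must be careful that the cubic nonlinearity, together with only $L^4$ a priori control, suffices to close the iteration, and that the conical singularities do not obstruct the local elliptic estimates (they do not, since regularity is a local question on the smooth regular part $M$, and the global membership in the self-adjoint domain is exactly what encodes the correct behavior at the singularities and at $\partial M$). A secondary point requiring care is case (2): the test functions now range over all of $\mathscr{D}(\nabla_{\max}) = H^1(M)$ including functions with nonzero trace, so the weak identity simultaneously forces both $u \in \mathscr{D}(\nabla_{\max}^*\nabla_{\max})$ \emph{and} the vanishing of the co-normal trace $\textup{tr}\circ\partial_\nu\, u$; I would extract this by integrating by parts against a $\phi$ supported near $\partial M$, using \eqref{DNH} and the continuity of the co-normal trace map into $H^{-1/2}(\partial M)$ to conclude $\textup{tr}\circ\partial_\nu\, u = 0$.
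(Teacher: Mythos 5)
Your reduction is correct but follows a genuinely different route from the paper. You argue directly from the definition of the adjoint: once $f:=\lambda-W'(u)\in L^2(M)$, the weak identity $\eps^2\langle\nabla_{\min}u,\nabla_{\min}\phi\rangle=\langle f,\phi\rangle$ for all $\phi\in\mathscr{D}(\nabla_{\min})$ says precisely that $\nabla_{\min}u\in\mathscr{D}(\nabla_{\min}^*)$, hence $u\in\mathscr{D}(\Delta_D)$, and similarly in the Neumann case. The paper instead takes a parabolic detour: it forms the mild solution $\omega=e^{t\Delta_D}u-\eps^{-2}e^{t\Delta_D}*(W'(u)-\lambda)$ of the associated heat equation, invokes analytic semigroup theory (Lunardi) to get $\omega(t)\in\mathscr{D}(\Delta_D)$ for $t>0$, and then shows $\omega\equiv u$ by testing the weak equation with $\phi=\omega-u$ and using Lumer--Phillips to get strong continuity at $t=0$. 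Your approach is shorter and avoids the semigroup machinery entirely; but both arguments stand or fall with the same fact, namely $W'(u)\in L^2(M)$, which the paper simply asserts (``Since $u, W'(u)\in L^2(M)$ \dots'') and which you correctly single out as the crux.

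On that crux your sketch is the weak point. Interior elliptic bootstrap improves integrability only on compact subsets of the regular part $M$; it gives no uniform control near the cone tips or near $\partial M$, and saying that the bad set is ``handled by the self-adjoint extension framework'' is circular, since membership in the self-adjoint domain is exactly the conclusion being sought. For $d\le 3$ (which covers the paper's 2D applications) no bootstrap is needed at all: $H^1(M)\hookrightarrow L^6(M)$ holds on compact manifolds with conical singularities, so $u^3\in L^2(M)$ directly. For $d\ge 4$ a genuine argument up to the singular set would be required (e.g.\ Moser iteration near the tip, or an a priori $L^\infty$ bound for minimizers via truncation). Two smaller points: the sign should be $\nabla_{\min}^*\nabla_{\min}u=+\eps^{-2}f$, i.e.\ $\Delta_D u=\eps^{-2}(W'(u)-\lambda)$; and in case (1) the conclusion $u\in\mathscr{D}(\nabla_{\min}^*\nabla_{\min})$ also presupposes $u\in\mathscr{D}(\nabla_{\min})$, which testing against $\mathscr{D}(\nabla_{\min})$ alone does not force --- this is an implicit hypothesis in the paper's proof as well, since there $\phi=\omega-u$ must lie in $\mathscr{D}(\nabla_{\min})$.
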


\begin{proof} We shall prove the first statement, the second one being verbatim with $\nabla_{\min}$ and $\Delta_D$
replaced by $\nabla_{\max}$ and $\Delta_N$, respectively.
We may rewrite the Allen-Cahn equation \eqref{ch2} as 
$\Delta u = \eps^{-2} \Big( W'(u) -\lambda \Big)$. Hence $u$ is a weak stationary (i.e. time independent)
solution to the inhomogeneous heat equation
\begin{align}\label{heat-equation}
(\partial_t - \Delta) \, \omega = - \eps^{-2} \Big( W'(u) -\lambda \Big).
\end{align}
Consider the self-adjoint extension $\Delta_D = - \nabla_{\min}^* \nabla_{\min}$
of the Laplace Beltrami operator on $(M,g)$, with domain $\mathscr{D}(\nabla_{\min}^* \nabla_{\min})$.
Consider the heat semigroup $e^{t\Delta_D}$ generated by $\Delta_D$.
A solution to \eqref{heat-equation} with initial condition $\omega(0)=u$ is given 
in terms of the heat semigroup by
\begin{align}\label{omega-def}
\omega := e^{t\Delta_D} u - \eps^{-2} e^{t\Delta_D} * \Big(W'(u) -\lambda\Big),
\end{align}
where $*$ indicates convolution in time. The proof now proceeds by studying regularity of $\omega$
and then proving $\omega \equiv u$. Since $u, W'(u) \in L^2(M)$ and the domain of $\Delta_D$ is dense in $L^2(M)$, $\omega$ is a mild solution to 
\eqref{heat-equation} in the sense of \cite[Definition 4.1.4]{Lun95}. By \cite[Theorem 4.3.1]{Lun95}, we conclude
that $\omega$ is in fact a classical solution, i.e. for any $T>0$ we have the regularity
\begin{align}\label{regularity-classical}
\omega \in C \Big( (0,T], \mathscr{D}(\Delta_D)\Big) \cap  C^1 \Big( (0,T], L^2(M) \Big).
\end{align}
Since $\mathscr{D}(\Delta_D) = \mathscr{D}(\nabla_{\min}^* \nabla_{\min})$, we can integrate by 
parts for any $\phi \in \mathscr{D}(\nabla_{\min})$
\begin{align}\label{by-parts}
  \int_M  g\big(\Delta_D \, \omega, \phi\big)  = - \int_M  g\big(\nabla \omega, \nabla \phi\big).
\end{align}
It remains to show $\omega \equiv u$. By construction, $(\omega - u)$ solves the following equation
$$
(\partial_t - \Delta) (\omega - u) = 0, \quad \omega(0) = u,
$$ 
weakly, i.e.  for any $\phi \in \mathscr{D}(\nabla_{\min})$ we have (cf. \eqref{weak-L} and \eqref{by-parts})
\begin{equation}\label{weak-w}
\int_M \big(\partial_t \, \omega\big) \phi + \int_M  g\big(\nabla (\omega - u), \nabla \phi\big) = 0.
\end{equation}
From here we compute by plugging $\phi = (\omega - u)$ into \eqref{weak-w}
\begin{equation}\begin{split}\label{decrease}
\frac{d}{dt} \Big\| \, \omega - u \, \Big\|^2_{L^2} &= 2 \int_M \big(\partial_t \, \omega\big) (\omega - u)
\\ &= - \int_M  g\Big(\nabla (\omega - u), \nabla (\omega - u) \Big) 
= - \Big\| \nabla (\omega - u) \Big\|^2_{L^2} \leq 0.
\end{split}\end{equation}
Thus, if $\|\omega(t) - u\|_{L^2}$ is continuous as $t \to 0$, then 
\eqref{decrease} together with $\omega(0)=u$ implies that $\omega(t) \equiv u$. From here the
statement follows with \eqref{regularity-classical}. Hence it remains to establish
continuity of $\|\omega(t) - u\|_{L^2}$ at $t=0$. Note first
\begin{align*}
\Big\| e^{t\Delta_D} * \Big(W'(u) -\lambda\Big) \Big\|_{L^2} \leq \int_0^t \Big\| e^{(t-\widetilde{t})\Delta_D} * 
\Big(W'(u) -\lambda\Big) \Big\|_{L^2} d\widetilde{t} \xrightarrow{t\to 0} 0.
\end{align*}
By the Lumer-Phillips theorem \cite[Theorem 3.1]{Lumer}, the heat operator $e^{t\Delta_D}$
is a strongly continuous semigroup, generated by $\Delta_D$. Indeed, $\Delta_D \leq 0$ is dissipative 
\cite[(1.1)]{Lumer} and the image of $(\textup{Id}-\Delta_D)$ is $L^2(M)$, since $1$ lies in the resolvent set of 
the closed operator $\Delta_D$. Hence the conditions of the Lumer-Phillips theorem are satisfied and 
by strong continuity of $e^{t\Delta_D}$ 
\begin{align*}
\Big\| e^{t\Delta_D} u - u \Big\|_{L^2} \xrightarrow{t\to 0} 0.
\end{align*}
In view of \eqref{omega-def}, we conclude that $\|\omega(t) - u\|_{L^2} \to 0$
as $t \to 0$ and hence by \eqref{decrease} we find $\omega(t) \equiv u$.
The statement now follows from \eqref{regularity-classical}. 
\end{proof}

\subsection{Convergence}
We want to extend the Modica-Mortola Theorem in 
\cite[Chapter 13.2]{Ri}, stated and proved for domains in $\R^d$, 
to compact manifolds $\Mbar$ of dimension $d$ with boundary and conical singularities. 

Let $W:\R\rightarrow [0,+\infty)$ be a continuous double-well potential with exactly two minima at $\pm 1$, e.g.,  $W(x) = \frac{1}{4} (x^2-1)^2$. Recall from \eqref{CH-intro-subsection} that 
$\sigma = \int_{-1}^1 \sqrt{W(s)/2} \, ds$. Let $\abs{M}_g$ denote the finite volume of $(M,g)$ and 
fix any mass 
 $m\in(-1,1)$. The energy functional $E_\eps(u)$ in \eqref{eemin} is
defined a priori only for $u \in H^1(M) \cap L^4(M)$. We extend its definition to any $u\in L^1(M)$ 
by a simple trick (recall $\mathscr{A}_N$ is defined in Proposition \ref{existence-prop})
\begin{equation}\label{eq:energy functional}
\cE_\eps[u]:=\left\{\begin{array}{ll} \displaystyle E_\eps(u), & \text{if } u\in \mathscr{A}_N, \\
+\infty, & \text{if } u\in L^1(M) \backslash \mathscr{A}_N. \end{array}\right. 
\end{equation} 
This extension does not affect the minimizers: indeed 
the minimizers $u_N^\eps$ of $E_\eps(u), u \in \mathscr{A}_N$ are precisely the minimizers of 
$\cE_\eps[u], u \in L^1(M)$.

Consider the space $BV(M;\{-1,1\})$ of functions $u:M \to \{-1,1\}$ of 
bounded variation and recall
the notation $\langle u \rangle$ in the mass constraint \eqref{mass}. Then we set using the 
notion of perimeter $P_g$ of Caccioppoli sets in Definition \ref{def:CacciSets}
\begin{equation}\label{E_0def}
    \cE_0[u]:=\left\{\begin{array}{ll} P_g(\{x\in M:u(x)=-1\}) & \text{if } u\in BV(M;\{-1,1\}),  \langle u \rangle = m, \\
         +\infty, & \text{otherwise.}\end{array}\right.
\end{equation}
This measures the size of the boundary of $\{x\in M:u(x)=-1\}$, not including the part of it contained in  $\partial M$. While $BV(M) \subset L^1_{loc}(M)$, 
restricting the values to $\{\pm 1\}$ gives $BV(M;\{-1,1\}) \subset L^1(M)$ on manifolds $(M,g)$ of finite volume. 

Note also that the minimizers of $\cE_0[u], u \in L^1(M)$, are precisely those functions with values $\pm 1$ and satisfying $\langle u \rangle = m$ with a jump along 
hypersurfaces of minimal perimeter. We begin with a preliminary approximation result, which is where we have to be careful
about the conical singularities.  

\begin{Lem}\label{smoothing-lem}
Consider $u\in BV(M,\{-1,1\})$ with $\langle u \rangle = m$, such that $\cE_0[u]< \infty$.
Set $E=\{x\in M: u(x)=-1\}$. Then there exists a sequence of subsets $(E_n)_n \subset M$
with smooth boundaries $\partial E_n \subset M\backslash \partial M$ not intersecting the conical singularities of $M$, such that for 
$u_n := -\chi_{E_n}+\chi_{M\setminus E_n}$ the following holds ($d= \dim M$)
\begin{align}\label{smoothing}
\cE_0[u_n] \to \cE_0[u], \quad \cH^{d-1}(\partial E_n \cap \partial M)=0, 
\quad |(E \backslash E_n) \cup (E_n \backslash E)| \to 0.
\end{align}
\end{Lem}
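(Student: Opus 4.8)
The plan is to prove this smoothing lemma in two stages: first approximating the set $E$ of finite perimeter by sets with smooth boundary using standard geometric measure theory (mollification of the characteristic function, or the structure theorem for sets of finite perimeter plus Sard's theorem applied to level sets of a mollified function), and second performing a perturbation near the conical tips and near $\partial M$ to ensure the resulting smooth boundaries avoid the singularities and meet $\partial M$ in an $\mathcal{H}^{d-1}$-null set.

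\paragraph{Stage one: smooth approximation in the regular part} On the smooth manifold $M\setminus\partial M$ (momentarily ignoring the tips) I would mollify $u=-\chi_E+\chi_{M\setminus E}$ at scale $\delta$ to get $u_\delta\in C^\infty$, then apply the coarea formula together with Sard's theorem to choose a regular value $t_\delta\in(-1,1)$ of $u_\delta$ so that $E_n:=\{u_{\delta}<t_\delta\}$ has smooth boundary; the standard lower-semicontinuity of perimeter gives $\liminf P_g(E_n)\ge P_g(E)$, and the coarea estimate $\int_{-1}^1 P_g(\{u_\delta<t\})\,dt=\|\nabla u_\delta\|_{L^1}\to P_g(E)$ (since $\|\nabla u_\delta\|_{L^1}\to |Du|(M)=P_g(E)$ for mollifications of a $BV$ function) lets me select $t_\delta$ with $P_g(E_n)\to P_g(E)$. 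The $L^1$ convergence $|(E\setminus E_n)\cup(E_n\setminus E)|=\|u-u_n\|_{L^1}/2\to 0$ follows since $u_\delta\to u$ in $L^1$ and the level sets track $E$. This part is entirely classical and I would cite \cite{Morgan} or the textbook \cite{Ri}.

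\paragraph{Stage two: dealing with the conical singularities and the boundary} This is the real point of the lemma. Near each conical singularity $P$, a neighborhood looks like a metric cone $C(N)=(0,\varepsilon)_r\times N$ with metric $dr^2+r^2 h_N$; crucially the radial spheres $\{r=\rho\}$ have volume $\mathcal{H}^{d-1}$ of order $\rho^{d-1}\to 0$ as $\rho\to 0$. I would perform the mollification in stage one using a metric-adapted mollifier away from the tips, and then argue that after the smooth approximation I can further modify $E_n$ inside a small ball $B_{\rho}(P)$ by replacing $E_n\cap B_\rho(P)$ with either all or nothing of that ball (whichever changes the perimeter and volume least), choosing $\rho=\rho_n\to 0$: the error in perimeter is at most $\mathcal{H}^{d-1}(\partial B_{\rho_n}(P))=O(\rho_n^{d-1})\to 0$ plus the perimeter of $\partial E_n$ already inside the ball (which tends to $0$ as $\rho_n\to 0$ by absolute continuity of the finite measure $\mathcal{H}^{d-1}\llcorner\partial E_n$), and the volume error is $O(\rho_n^d)\to 0$. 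After this cut-off the boundary $\partial E_n$ no longer meets $B_{\rho_n/2}(P)$, so it avoids the tip; a small further mollification/smoothing restores smoothness of the modified boundary near $\partial B_{\rho_n}(P)$. For the condition $\mathcal{H}^{d-1}(\partial E_n\cap\partial M)=0$: since $\partial E_n$ is a smooth hypersurface transverse to a generic translate, I can push it slightly into the interior (flowing along an inward vector field for a generic small time, or choosing a slightly different regular value $t_\delta$) so that $\partial E_n\subset M\setminus\partial M$ meets $\partial M$ in at most a set of dimension $d-2$; Sard-type / transversality arguments make $\partial E_n\cap\partial M$ generically $(d-2)$-dimensional, hence $\mathcal{H}^{d-1}$-null, and again the perturbation costs arbitrarily little perimeter and volume.

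\paragraph{Fixing the mass constraint and the main obstacle} The modifications above destroy the constraint $\langle u_n\rangle=m$ slightly, but since $|m|<1$ and the volume errors tend to $0$, I can restore it by adding or removing a small ball (or thin collar) in the \emph{regular} part of $M$ at positive distance from all tips and from $\partial M$: this adjusts the volume by any prescribed small amount while changing the perimeter by a controlled quantity that $\to 0$, and keeps $\partial E_n$ smooth and away from the singular set. Then $\cE_0[u_n]=P_g(\{u_n=-1\})\to P_g(E)=\cE_0[u]$, $\langle u_n\rangle=m$, $\mathcal{H}^{d-1}(\partial E_n\cap\partial M)=0$, and $\|u_n-u\|_{L^1}\to 0$, as required. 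The main obstacle I anticipate is the bookkeeping in stage two: one must coordinate \emph{three} simultaneous requirements (avoid the tips, meet $\partial M$ in a null set, preserve the mass) while keeping each perimeter perturbation summably small and not letting one repair undo another; the cleanest route is to do them sequentially — first the tip cut-off with $\rho_n\to 0$, then the transverse push-off of $\partial M$, then a single volume-restoring ball in a fixed compact regular region — verifying at each step that the perimeter and $L^1$ errors are $o(1)$, and using the finiteness of $\cE_0[u]$ (hence absolute continuity of $\mathcal{H}^{d-1}\llcorner\partial E$) to control the ``perimeter already near the tip'' term.
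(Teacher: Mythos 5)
Your proposal follows essentially the same route as the paper's proof: a classical smooth approximation of the finite-perimeter set $E$ (the Modica/Rindler lemmas you cite, which also deliver $\cH^{d-1}(\partial E_n\cap\partial M)=0$), followed by a cut-off near each conical tip at a transversally chosen radius $\rho_n\to 0$ — exploiting that the cross-sections $\{r=\rho\}$ have $\cH^{d-1}$-measure $O(\rho^{d-1})$ and that $\cH^{d-1}(\partial E_n\cap B_{\rho}(P))\to 0$ — with a subsequent smoothing of the resulting corner. Your additional step restoring the exact mass constraint $\langle u_n\rangle=m$ by adjoining or removing a small ball in the regular part is sound and in fact addresses a point the paper's proof leaves implicit.
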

\noindent
Here $|E|$ denotes the volume of the set $E$.
\begin{proof}
Applying \cite[Lemma 13.8]{Ri} locally in each coordinate neighborhood, 
     we can approximate $u$ by $v_n=-\chi_{F_n}+\chi_{M\setminus F_n}$, where $(F_n)$ is a sequence of subsets in $M$ of finite perimeter
     with boundary that is smooth in the interior of $M$, and 
\begin{equation*}
\cE_0[v_n] \to \cE_0[u], \quad |(E \backslash F_n) \cup (F_n \backslash E)| \to 0.
\end{equation*}
     Since each $F_n$ is smooth, and hence $F_n$ and and $M\setminus F_n$ each contain a non-empty open ball, we can apply \cite[Lemma 1]{Mo} or \cite[Lemma 13.7]{Ri}
     locally in each coordinate chart, to find another approximation by $w_n=-\chi_{G_n}+\chi_{M\setminus G_n}$, where the subsets $(G_n)$ are of finite perimeter, 
     with boundary smooth in the interior of $M$, and 
$$
\cE_0[w_n] \to \cE_0[u], \quad \cH^{d-1}(\partial G_n \cap \partial M)=0, 
\quad |(E \backslash G_n) \cup (G_n \backslash E)| \to 0.
$$ 
The interfaces $\partial G_n$ may however intersect the conical singularities of $M$. 
In order to avoid that, let us assume first that $M$ has a single conical singularity 
$\mathscr{C}(N)=(0,1) \times N$. Denote the radial coordinate  by $x\in (0,1)$, and extend it as a smooth function to all of $M$, with $x\geq1$ outside $\mathscr{C}(N)$.
Sard's theorem applied to the function $x\restriction \partial G_n$ on $\partial G_n$ implies that there exists $\eps_n \in (0,1/n)$
such that $V_n:= \partial G_n \cap \{x= \eps_n\}$ is a smooth submanifold of $G_n$, for each $n\in\N$. By construction $E'_n := G_n \cap \{x\geq \eps_n\}$ has boundary $\partial E'_n = G'_n\cup G''_n$ where
\begin{align*}
G'_n &:= G_n\cap \{x=\eps_n\}, \\
G''_n&:= \partial G_n \cap \{x\geq \eps_n\},
\end{align*}
and the submanifolds $G'_n$ and $G''_n$ intersect each other in their common boundary $V_n$ transversally. The sequence
$E'_n$ satisfies \eqref{smoothing}.
But $\partial E'_n$ is not smooth at the corner $V_n$. 
This setting is illustrated in Figure \ref{smoothing1-fig}.

 \begin{figure}[h]
\begin{center}
  \includegraphics[width=0.8\linewidth]{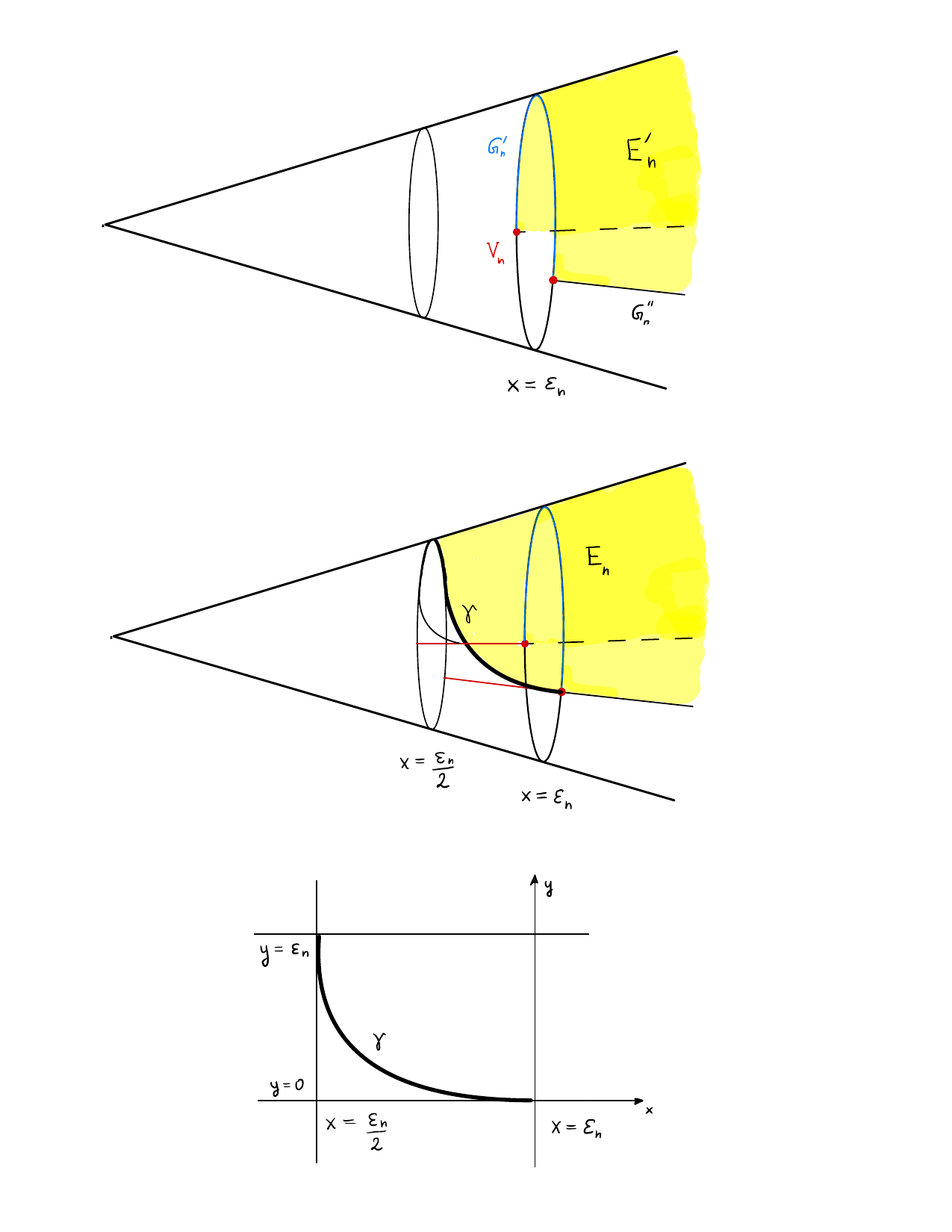}
  \end{center}
  \caption{Illustration of $V_n, E'_n, G'_n$ and $G''_n$.} \label{smoothing1-fig}
\end{figure}

However, we may smoothen out the corner as follows. Write $G'_n = \{\eps_n\}\times \Gtilde'_n$ and $V_n = \{\eps_n\}\times \Vtilde_n$ with $\Gtilde'_n,\Vtilde_n\subset N$.  Denote by $y_n:\Gtilde'_n\to[0,\infty)$ a boundary
defining function of $\Vtilde_n$ in $\Gtilde'_n$, i.e.\ $\Vtilde_n=\{y_n=0\}$ and $\nabla y_n\neq0$ at $\Vtilde_n$, and chosen to have $1$ in its range as a regular value. Consider a smooth function 
$\gamma: (\eps_n/2, \eps_n)_x \to  (0, 1)_{y_n}$ whose graph forms a smooth curve together
with the half-lines $(x\geq \eps_n, y=0)$ and $(x=\eps_n/2, y\geq 1)$. This is illustrated in 
Figure \ref{smoothing2-fig}.
 
 \begin{figure}[h]
\begin{center}
  \includegraphics[width=0.5\linewidth]{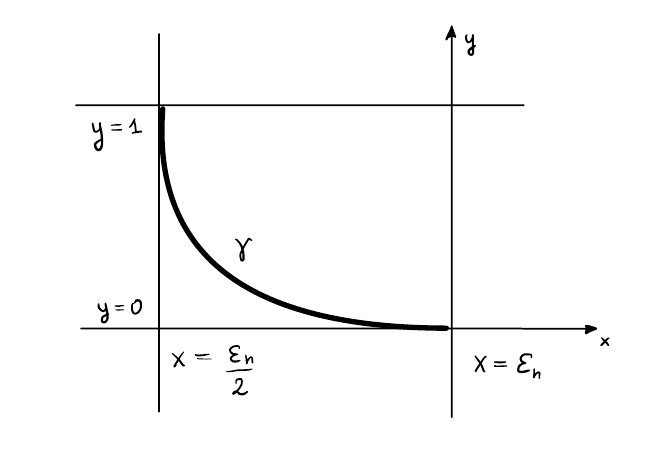}
  \end{center}
  \caption{Illustration of curve $\gamma$.} \label{smoothing2-fig}
\end{figure}

We can now smoothen out the corner in $E'_n$ by setting as in Figure \ref{smoothing3-fig}
$$
E_n := E'_n \cup \{(x,q) \in (\eps_n/2,\eps_n) \times \Gtilde'_n\,: \ \ y_n(q) > \gamma(x) \}\,.
$$

 \begin{figure}[h]
\begin{center}
  \includegraphics[width=0.8\linewidth]{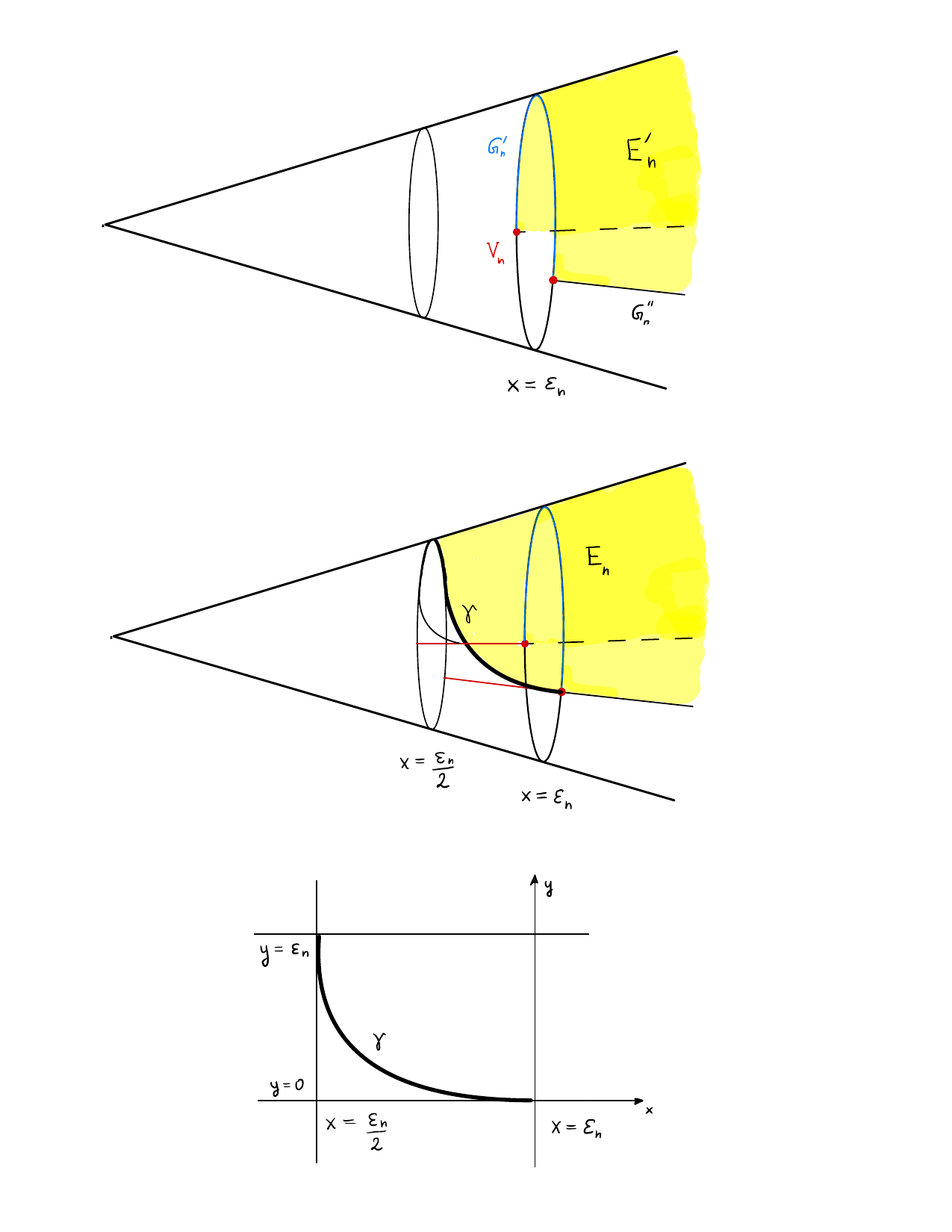}
  \end{center}
  \caption{Illustration of $E_n$.} \label{smoothing3-fig}
\end{figure}

Because $E_n$ differs from $G_n$ only in a small conical neighborhood $(0,\eps_n) \times N$
with $\eps_n \to 0$ as $n \to \infty$, $(E_n)$ satisfies \eqref{smoothing}, and this proves the statement.
In the general case of finitely many conical singularities, we repeat such procedure in each conical neighborhood. 
\end{proof}

We can now prove our main result here, namely the $\Gamma$-convergence of the functionals above. 

\begin{theorem}\label{thm:1}
$\cE_\eps$ $\Gamma$-converges to $\cE_0$ as $\eps\rightarrow0^+$ with respect to the strong $L^1$-topology.        
\end{theorem}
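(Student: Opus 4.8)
\emph{Overview.} The plan is to verify the two defining conditions of $\Gamma$-convergence in the strong $L^1(M)$-topology: the liminf inequality $\cE_0[u]\le\liminf_{\eps\to0}\cE_\eps[u_\eps]$ for every $u_\eps\to u$ in $L^1(M)$, and the existence, for every $u$, of a recovery sequence $u_\eps\to u$ in $L^1(M)$ with $\limsup_{\eps\to0}\cE_\eps[u_\eps]\le\cE_0[u]$. Both are local refinements of the Euclidean Modica--Mortola argument of \cite[Chapter 13.2]{Ri}; the task is to check that the conical singularities are harmless in the liminf inequality and that in the recovery inequality they are handled entirely by Lemma~\ref{smoothing-lem}.

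\emph{Liminf inequality.} We may assume $\ell:=\liminf_{\eps\to0}\cE_\eps[u_\eps]<\infty$ and, passing to a subsequence, that the liminf is attained, that $u_\eps\in\mathscr{A}_N$ with $\sup_\eps E_\eps(u_\eps)<\infty$, and that $u_\eps\to u$ a.e. Then $\langle u\rangle=m$ by $L^1$-convergence, and boundedness of $\tfrac1{2\sigma\eps}\int_M W(u_\eps)\dd\vol_g\le E_\eps(u_\eps)$ forces $\int_M W(u_\eps)\dd\vol_g\to0$, so Fatou gives $W(u)=0$, i.e.\ $u\in\{-1,1\}$ a.e. Set $\Phi(t):=\int_0^t\sqrt{W(s)/2}\,ds$, so that $\Phi(1)-\Phi(-1)=\sigma$, and let $\Phi_1$ be the bounded Lipschitz function with $\Phi_1=\Phi$ on $[-1,1]$ and $|\Phi_1'|\le|\Phi'|$. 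Young's inequality gives pointwise $\tfrac\eps2|\nabla v|_g^2+\tfrac1\eps W(v)\ge 2\sqrt{W(v)/2}\,|\nabla v|_g=2|\nabla(\Phi\circ v)|_g\ge 2|\nabla(\Phi_1\circ v)|_g$, hence $\sigma E_\eps(u_\eps)\ge\int_M|\nabla(\Phi_1\circ u_\eps)|_g\dd\vol_g$. Since $\Phi_1$ is bounded and continuous and $\vol_g(M)<\infty$, dominated convergence gives $\Phi_1\circ u_\eps\to\Phi_1\circ u$ in $L^1(M)$, so by lower semicontinuity of the total variation on the Riemannian manifold $M$ (which follows from the dual description $|Dv|(M)=\sup\{\int_M v\,\diver_g X:X\in C^1_c(M;TM),\ |X|_g\le1\}$) and from $\Phi_1\circ u=\Phi\circ u=\Phi(-1)+\sigma\chi_{\{u=1\}}$ we get $\sigma P_g(\{u=1\})=|D(\Phi_1\circ u)|(M)\le\liminf_\eps\sigma E_\eps(u_\eps)=\sigma\ell$. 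Since $d=\dim M\ge2$, the finitely many conical points carry no $(d{-}1)$-dimensional mass, so $P_g(\{u=1\})=P_g(\{u=-1\})$; in particular $u\in BV(M;\{-1,1\})$ and $\cE_0[u]=P_g(\{u=-1\})\le\ell$, as required.

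\emph{Recovery inequality.} Since the $\Gamma$-upper limit of $(\cE_\eps)$ is automatically $L^1$-lower semicontinuous (a standard fact), it is enough to construct recovery sequences for $u$ in a class $\mathcal D$ dense in energy: every admissible $u$ (i.e.\ $u\in BV(M;\{-1,1\})$, $\langle u\rangle=m$, $\cE_0[u]<\infty$) is an $L^1$-limit of $u_n\in\mathcal D$ with $\cE_0[u_n]\to\cE_0[u]$. Lemma~\ref{smoothing-lem} provides $u_n=-\chi_{E_n}+\chi_{M\setminus E_n}$ with $\partial E_n$ a smooth hypersurface in the regular part of $\Mbar$, disjoint from the conical points, with $\cH^{d-1}(\partial E_n\cap\partial M)=0$, and with $u_n\to u$ in $L^1(M)$ and $\cE_0[u_n]\to\cE_0[u]$; hence $\langle u_n\rangle\to m$. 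As $|m|<1$, both $E_n$ and $M\setminus E_n$ have nonempty interior, so after replacing $E_n$ by its symmetric difference with a small smooth ball of volume $\tfrac12|M|\,|\langle u_n\rangle-m|\to0$ lying deep inside $E_n$ or $M\setminus E_n$, disjoint from $\partial E_n$ and the singularities, we may in addition assume $\langle u_n\rangle=m$ exactly, at the cost of a perimeter addition tending to $0$. It therefore suffices to build a recovery sequence for one such $u_n$. Let $d(\cdot)$ be the signed $g$-distance to $\Sigma_n:=\partial E_n$, smooth on a tubular neighbourhood $\{|d|<\delta\}$ sitting in the regular part (if the closure of $\Sigma_n$ touches $\partial M$, a standard even reflection across $\partial M$, consistent with the class $\mathscr{A}_N$, makes the construction go through unchanged), and let $q$ solve $q'=\sqrt{2W(q)}$, $q(\pm\infty)=\pm1$. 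With $u^n_\eps(x):=\psi_\eps(d(x)/\eps)$, where $\psi_\eps=q$ on $[-R_\eps,R_\eps]$ with $R_\eps\to\infty$, $\eps R_\eps\to0$, and $\psi_\eps$ interpolated to $\pm1$ just outside, the coarea formula together with $\int_\R(\tfrac12 q'^2+W(q))\,dt=2\sigma$ and the exponential decay of $q$ yields $u^n_\eps\to u_n$ in $L^1(M)$ and $\lim_{\eps\to0}E_\eps(u^n_\eps)=P_g(E_n)=\cE_0[u_n]$. Finally $\langle u^n_\eps\rangle=m+o(\sqrt\eps)$, the defect coming only from the transition strips of width $O(\eps R_\eps)$; replacing $u^n_\eps$ by $u^n_\eps+c^n_\eps$ with $c^n_\eps:=m-\langle u^n_\eps\rangle$ restores $\langle\cdot\rangle=m$, leaves the gradient term untouched, and---$W$ being $C^1$ with nondegenerate minima---changes $\tfrac1{2\sigma\eps}\int_M W$ by $o(1)$. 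This is an admissible recovery sequence for $u_n$, and the density-in-energy reduction completes the proof.

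\emph{Main obstacle.} The difficulty is concentrated in the recovery sequence: the optimal-profile construction presupposes a smooth interface contained in the regular part, so everything hinges on pushing the approximating interfaces off the cone tips without increasing their length in the limit, which is exactly Lemma~\ref{smoothing-lem}; the rest is bookkeeping (the small-ball volume correction and the $o(\sqrt\eps)$ constant shift needed to keep the mass constraint exact). In the liminf inequality the conical points are irrelevant because the estimate is purely local and points carry no $(d{-}1)$-dimensional measure; the only care needed there is the truncation of $\Phi$, required because a priori $u_\eps\in H^1(M)\cap L^4(M)$ only and need not be bounded.
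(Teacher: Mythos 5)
Your proposal is correct and follows essentially the same route as the paper: the standard Modica--Mortola chain combined with lower semicontinuity of the total variation for the $\liminf$ inequality, and Lemma~\ref{smoothing-lem} to push the approximating interfaces off the conical points before running the classical optimal-profile construction for the recovery sequence. The only difference is one of detail rather than substance --- you write out the profile construction, the truncation of $\Phi$, and the exact mass-constraint corrections explicitly, where the paper delegates these steps to \cite[Prop.~2]{Mo}, \cite[Thm.~13.6]{Ri} and \cite[Prop.~3.3]{BNAP22}.
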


\begin{proof} 
To prove $\Gamma$-convergence we need to prove the $\liminf$-inequality (\ref{eq:liminf-ineq}) and 
additionally, by Remark \ref{rem:limsup-ineq}, construct a sequence that fulfills the $\limsup$-inequality \eqref{eq:limsup-ineq}. 
The underlying complete metric space is $L^1(M)$. 

We shall start with the $\liminf$-inequality. Let $u_\eps\longrightarrow u$ in $L^1(M)$ as $\eps\rightarrow 0$ 
and assume $\displaystyle\liminf_{\eps\rightarrow 0}\cE_\eps[u_\eps]<\infty$ (otherwise there is nothing to prove), 
i.e. we may assume without loss of generality that $u_\eps\in  \mathscr{A}_N$ for all $\eps>0$. Consider a subsequence 
$\eps_n\longrightarrow 0$ such that $u_{\eps_n} \to u$ pointwise almost everywhere as $n\rightarrow\infty$. 
Then by Fatou's lemma we obtain
 \begin{align*}
     0\leq \int_M W(u) &= \int_M \liminf_{n\rightarrow \infty} W(u_{\eps_n})
     \leq \liminf_{n\rightarrow \infty} \int_M W(u_{\eps_n}) 
     \\ &\leq \liminf_{n\rightarrow \infty} \int_M \frac{\eps_n^2}{2} \,
     |\nabla_g u_{\eps_n}|^2+ W(u_{\eps_n}) = \liminf_{n\rightarrow \infty} 2\sigma \eps_n\cE_{\eps_n}[u_{\eps_n}]=0,
 \end{align*}
 because $\displaystyle\liminf_{n\rightarrow\infty}\cE_{\eps_n}[u_{\eps_n}]<\infty$. Hence, $W(u) = 0$ and thus also
$u$ takes values in $\{-1,1\}$ almost everywhere. Furthermore, we compute
 \begin{align*}
     \liminf_{n\rightarrow\infty}&\, \cE_{\eps_n}[u_{\eps_n}]=\liminf_{n\rightarrow\infty}  \frac{1}{2\sigma}\int_M 
     \frac{\eps_n}{2} |\nabla_g u_{\eps_n}|^2+\frac{1}{\eps_n}W(u_{\eps_n}) 
     \\&\geq \liminf_{n\rightarrow\infty}\int_M \frac{1}{\sqrt{2}\sigma} |\nabla_gu_{\eps_n}|\sqrt{W(u_{\eps_n})} 
     \geq \liminf_{n\rightarrow\infty}\int_M  \frac{1}{\sqrt{2}\sigma} |\nabla_g(h\circ u_{\eps_n})|\\& \geq 
      \frac{1}{\sqrt{2}\sigma}\norm{\nabla_g(h\circ u)}_g(M)
     = \frac{(h(1)-h(-1))}{\sqrt{2}\sigma} P_g(\{x\in M: u(x)=-1\}) \\
     &=P_g(\{x\in M: u(x)=-1\})  \equiv \cE_0[u],
 \end{align*}
 where $h(t):=\int_0^t\sqrt{W(s)} \, ds$ 
and we used \eqref{eq:lower semi-cont} in the last inequality step.
This proves the $\liminf$-inequality.
 
 In the second step we have to construct a recovery sequence $(u_\eps)_\eps$ that converges to $u$ in $L^1(M)$ and fulfills the $\limsup$-inequality
 \eqref{eq:limsup-ineq}. If $\cE_0[u]=\infty$, there is nothing to prove and hence we assume without loss of generality that $\cE_0[u]< \infty$ and thus 
 $u\in BV(M,\{-1,1\})$ and $\langle u \rangle = m$. 
 
 Note that we do not assume that $u$ is a minimizer of $\cE_0$, hence the 
interface $\partial E$ need not be smooth, even for a compact smooth $M$.
 In view of Lemma \ref{smoothing-lem} however, we may assume without loss 
 of generality that in our quest for a recovery sequence, $u\in BV(M,\{-1,1\})$ has the property that $E$ 
 is of finite perimeter, has smooth boundary disjoint from conical singularities and satisfies $\cH^{d-1}(\partial E\cap \partial M)=0$. 
 Also, we may assume that $E$ is open. Since $\partial E$ may be assumed to be disjoint from 
 the conical singularities, the remainder of the argument is performed as in the non-singular case. 
 
 The construction of the recovery sequence $(u_\eps)_\eps$ is 
nowadays classical, 
 see \cite[Prop. 2 p. 133]{Mo} and also \cite[Theorem 13.6, p. 383-386]{Ri} for open subsets of $\R^n$. Instead of repeating the steps therein, 
 we refer to the Riemannian version e.g. in \cite[Proposition 3.3]{BNAP22}, which constructs $(u_\eps)_\eps$ such that 
     \begin{align*}
     & \displaystyle\limsup_{\eps\rightarrow 0^+}\cE_\eps[u_\eps]\leq
     P_g(E,M) \equiv \cE_0[u].
     \end{align*}
     The construction is local near the smooth boundary $\partial E$. Since 
     by assumption $\partial E$ is disjoint from the conical singularity, the arguments remain 
     unchanged in our setting. By Remark \ref{rem:limsup-ineq} 
     $\Gamma$-convergence now follows.
\end{proof}
 
 A fundamental consequence of $\Gamma$-convergence is convergence of minimizers and 
 the abstract result \cite[Proposition 4]{Mo} together with Theorem \ref{thm:1} implies the following
 
 \begin{Prop}
For every $\eps>0$, let $u_\eps \in L^1(M)$ be a minimizer of $\cE_\eps$. 
If $u_\eps \to u$ in $L^1(M)$ as $\eps\rightarrow 0^+$, then $u$ is a minimizer of $\cE_0$ in $L^1(M)$ and 
$\displaystyle\lim_{\eps\rightarrow0^+}\cE_\eps(u_\eps)=\cE_0(u)$.
 \end{Prop}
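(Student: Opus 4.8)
The plan is to deduce the statement from the abstract ``fundamental theorem of $\Gamma$-convergence'' (see \cite[Proposition 4]{Mo}, or \cite[\S 13]{Ri} for a textbook account), feeding in the two properties of the pair $(\cE_\eps,\cE_0)$ that were already established in Theorem \ref{thm:1}: the $\liminf$-inequality and the existence of recovery sequences realizing the $\limsup$-inequality. Before starting I would record two harmless preliminaries: existence of the minimizers $u_\eps$ is guaranteed by Proposition \ref{existence-prop}, so the hypothesis is not vacuous; and since $\Mbar$ is compact and $|m|<1$, there is a Caccioppoli set of the prescribed volume fraction with finite perimeter, so $\cE_0\not\equiv+\infty$ and its infimum over $L^1(M)$ is finite.

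The first step is the upper bound. Fix an arbitrary $v\in L^1(M)$; we may assume $\cE_0[v]<\infty$, as otherwise there is nothing to show. By the $\limsup$-part of Theorem \ref{thm:1} there is a recovery sequence $v_\eps\to v$ in $L^1(M)$ with
$$\limsup_{\eps\to 0^+}\cE_\eps[v_\eps]\le \cE_0[v].$$
Since $u_\eps$ minimizes $\cE_\eps$ on $L^1(M)$ we have $\cE_\eps[u_\eps]\le \cE_\eps[v_\eps]$ for every $\eps>0$, hence
$$\limsup_{\eps\to 0^+}\cE_\eps[u_\eps]\le \limsup_{\eps\to 0^+}\cE_\eps[v_\eps]\le \cE_0[v].$$

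The second step is the lower bound, which is immediate from the hypothesis $u_\eps\to u$ in $L^1(M)$ together with the $\liminf$-inequality of Theorem \ref{thm:1}:
$$\cE_0[u]\le \liminf_{\eps\to 0^+}\cE_\eps[u_\eps].$$
Chaining the two displays yields
$$\cE_0[u]\le \liminf_{\eps\to 0^+}\cE_\eps[u_\eps]\le \limsup_{\eps\to 0^+}\cE_\eps[u_\eps]\le \cE_0[v]\qquad\text{for every }v\in L^1(M).$$
Since $v$ is arbitrary this says $\cE_0[u]\le \cE_0[v]$ for all $v$, i.e.\ $u$ is a minimizer of $\cE_0$; in particular $\cE_0[u]<\infty$, so $u\in BV(M;\{-1,1\})$ with $\spr{u}=m$. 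Specializing to $v=u$ forces all four quantities in the chain to coincide, so the limit $\lim_{\eps\to 0^+}\cE_\eps[u_\eps]$ exists and equals $\cE_0[u]$, which is the remaining assertion.

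The honest assessment is that there is no real obstacle left at this stage: every analytic difficulty has already been absorbed into Theorem \ref{thm:1} — notably the treatment of the conical singularities in the construction of recovery sequences (Lemma \ref{smoothing-lem}) and the coarea/lower-semicontinuity argument behind the $\liminf$-inequality. The only points that need even a line of justification are the existence of $u_\eps$ (Proposition \ref{existence-prop}) and the properness of $\cE_0$, both trivial in the compact setting; the rest is the standard soft argument above.
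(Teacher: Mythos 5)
Your proof is correct and follows essentially the same route as the paper, which simply invokes the abstract convergence-of-minimizers consequence of $\Gamma$-convergence (\cite[Proposition 4]{Mo}, restated as Theorem \ref{thm:conv_min}(2)) together with Theorem \ref{thm:1}; you have merely unpacked the standard two-line sandwich argument behind that citation. No gaps.
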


\section{Minimizers on cones in 2D: interfaces}\label{Danielsec}
To study the limits at $\eps=0$ of minimizers of \reff{ch2} in a singular setting, we 
use truncated cones of height $h$ with elliptic base at $z=0$ 
of semi axes $1$ and $a\ge 1$, parameterized 
over the unit disk, i.e.
\huga{\label{cpara}
\bpm \xti\\\yti\\\zti\epm=\phi(x,y):=\bpm ax\\
y\\h(1-(x^2+y^2)^{1/2})\epm,\quad (x,y)\in\Om
=\{x^2{+}y^2\le 1\}. 
}
In view of the forthcoming results in \S\ref{covsec}--\S\ref{ncsec4}, we first discuss the three 
types of limit interfaces which we expect from 
the types  T1 (tip), T2 (winding),
and T3 (horizontal) from Fig.\ref{f1} for $\eps\to 0$.  
They divide the area into two equal halves since $m=0$, and by the general theory they are 
critical points of the length functional, so they have constant curvature (except possibly where they pass through the conical singularity), and 
we call their respective lengths $l_1$, $l_2$ and $l_3$. \medskip

We have two results 
on the question which interface type is a length minimizer. The first is 
semi--analytical, giving a rather explicit computation of the 
interface lengths at $\eps=0$ on a circular cone. 
See also Fig.~\ref{f6}(b$_1$) for a comparison with $E_\eps$ for the 
respective solutions at small $\eps>0$. 

\begin{figure}[h]
\bce
\btab{ll}{{\sm (a)}&{\sm (b)}\\
\ig[width=0.27\tew]{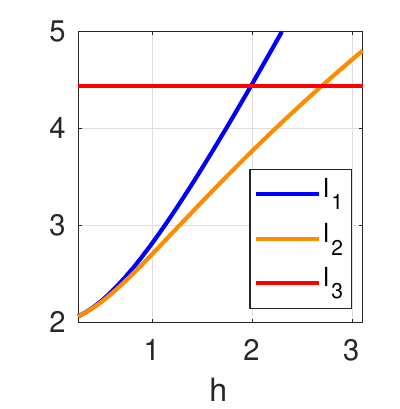}&
\rb{4mm}{\ig[width=0.36\tew]{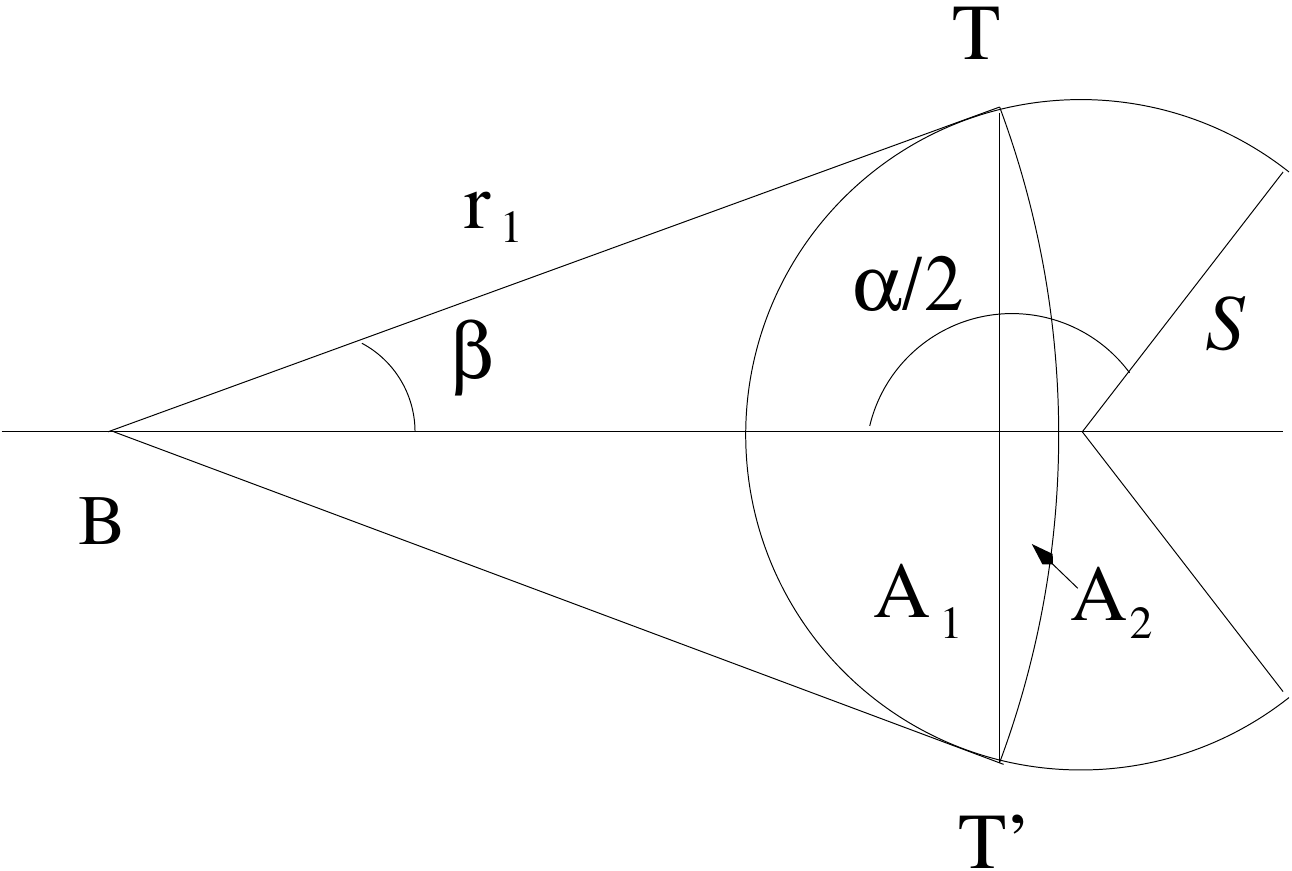}\hs{4mm}}
}
\ece\vs{-3mm}
\caption{{\sm  (a) $l_j(h)$. (b) Sketch for computing $l_2$. The pacman shape arises from cutting open the cone along a radius, and flattening it into the plane.
\label{f7}}}
\end{figure}

\begin{resu}\label{resu1}
 On a circular cone ($a=1$ in \eqref{cpara}) the lengths $l_j$ behave as in Figure \ref{f7}(a) as a function of the height $h$ of the cone. In particular, the tip interface is never a minimizer, and the circular interface is a minimizer for large $h$. 
\end{resu}
\begin{proof}
The slant height of the circular cone of height $h$ is $S=\sqrt{1+h^2}$, so $$l_1(h)=2\sqrt{1+h^2}\,.$$
The cone  can be explicitly 
flattened to a circular sector $C$ of radius $S$
 and angle 
$\al=2\pi/\sqrt{1+h^2}$, see Fig.~\ref{f7}(b), and hence area 
$A=\al S^2/2=2\pi\sqrt{1+h^2}$. Then, 
$$l_3=\sqrt{2}\pi\,: \text{ circular interface at height $\tilde h=(1-1/\sqrt{2})h$}.$$
Finally, $l_2(h)$ can  be computed semi--analytically as follows: 
First we seek the point $T$ such that the circular arc from $T$ to its reflection $T'$ divides 
$C$ into two equal areas $\al S^2/4$. For given $T$, the intersection of 
the tangent to the circle at $T$ with the $x$--axis is at 
$x_B=-S/\sin(\beta)$ relative to the center, and the areas $A_1(\beta)$ and $A_2(\beta)$ 
of the two circular segments to the right and left of the line $\overline{TT'}$ 
are given by  
\huga{
\text{$A_1(\beta)=\frac{r_1^2}{2}(2\beta{-}\sin(2\beta))$, 
and 
$A_2(\beta)=\frac{S^2}{2}(\pi{-}2\beta{-}\sin(2\beta))$,  
}}
where  $r_1=S\cot\beta$. 
Thus, we first numerically solve 
$A_1(\beta)+A_2(\beta)=\al S^2/4$ 
for $\beta\in(0,\pi/2)$, and then compute the length $l_2(h)=2\beta r_1$. 
\end{proof}

The next result applies to surfaces with general conical singularities 
of angle $\al<2\pi$, which for instance applies to the conic metric 
$dr^2+c^2 r^2 d\theta^2$ on 
$(0,1)\times S^1$ with angle $\al=2\pi c$ if and only if $ c<1$, and 
to the elliptic cones \eqref{cpara} with general $a\ge 1$, where 
$\al<2\pi$ if $h>0$, while naturally $\al=2\pi$ if $h=0$.  
Conical surfaces with angles bigger than $2\pi$ can be constructed as follows, 
see also Appendix \ref{coneHD} for further discussion. 
If we choose a simple closed curve in the unit sphere in $\R^3$ and 
take $M$ to be the union of segments $0p$ for $p$ on the curve then we obtain a conical surface with the angle $\al$ being the curve length. 
If $\al\ge 2\pi$, then minimizers may in general pass through the 
singularity (as is the case for a flat disk, which has $\al=2\pi$, where any diameter is a minimizer for $m=0$), but we now show that tip interfaces are never minimizers  
for $\al<2\pi$. 

\begin{Prop} \label{prop:tip interface}
Let $\Mbar$ be a surface with a conical singularity $P$ of angle $\al<2\pi$. 
Then any curve of minimal length which divides $\Mbar$ into two parts of prescribed  area ratio does not pass through $P$. 
\end{Prop}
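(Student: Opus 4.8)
The plan is to argue by contradiction. Suppose $\gamma$ is a curve of minimal length dividing $\Mbar$ into two regions of the prescribed area ratio, and suppose $P\in\gamma$. I would then construct an admissible competitor $\gamma'$ — same area ratio, still separating $\Mbar$ — that is strictly shorter, by ``cutting the corner'' of $\gamma$ at $P$ and correcting the enclosed area by a small perturbation far from $P$. The hypothesis $\alpha<2\pi$ enters exactly where the corner‑cut is shown to shorten: a neighborhood of $P$ is isometric to a flat cone of total angle $\alpha$, and for $\alpha<2\pi$ this develops isometrically onto a planar circular sector of opening $\alpha$; inside a small geodesic ball $B_\rho(P)$ any two points are then joined by the developed chord, which avoids $P$ and is strictly shorter than the broken path through $P$.

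First I would record the local structure of $\gamma$ near $P$. By the regularity theory for perimeter minimizers under a volume constraint (\cite{Morgan, Fe14}), away from $P$ the curve $\gamma$ is a smooth embedded $1$‑manifold of constant geodesic curvature $\kappa$; in the developed picture it is therefore a finite union of circular arcs (or segments), so it cannot spiral into $P$. Hence for all sufficiently small $\rho>0$ the set $\gamma\cap\overline{B_\rho(P)}$ consists exactly of the two arcs $\gamma_1,\gamma_2$ emanating from $P$, each meeting $\partial B_\rho(P)$ in a single point $A$, resp.\ $B$, and each of length $\ge\rho$; all other components of $\gamma$ stay a fixed positive distance from $P$. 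Writing $\phi$ for the angular separation of $A$ and $B$ measured the short way around the circle of total angle $\alpha$, one has $0<\phi\le\alpha/2<\pi$, and the developed chord $[A,B]$ lies in $\overline{B_\rho(P)}\setminus\{P\}$ with length $2\rho\sin(\phi/2)<2\rho$.

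Next I would build the competitor: replace the portions of $\gamma_1,\gamma_2$ inside $B_\rho(P)$ by the chord $[A,B]$. This is still embedded (the chord is in $B_\rho(P)$, disjoint from the rest of $\gamma$) and still separates $\Mbar$, and its length has dropped by
\[
\operatorname{length}(\gamma_1)+\operatorname{length}(\gamma_2)-2\rho\sin(\phi/2)\ \ge\ 2\rho\bigl(1-\sin(\phi/2)\bigr)\ \ge\ c_0\,\rho,\qquad c_0:=2\bigl(1-\sin(\alpha/4)\bigr)>0,
\]
the last bound being uniform in $\phi$ since $1-\sin(\phi/2)$ is decreasing and $\phi\le\alpha/2$. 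The surgery changes the area of one region by some $\delta$, namely the area enclosed between $\gamma_1\cup\gamma_2$ and $[A,B]$; this region lies in a $C\rho$‑neighborhood of the triangle $APB$, so $|\delta|\le C\rho^2$ with $C$ depending only on $\kappa$ and $\alpha$. To restore the exact ratio I would fix a smooth interior arc $\sigma$ of $\gamma$ away from $P$ and apply a normal perturbation supported in a small disk on $\sigma$, by a bump $\psi$ with $\int_\sigma\psi\,ds=\mp\delta$ and $\|\psi\|_{C^1}=O(|\delta|)$; its length cost is $\int_\sigma\kappa\,\psi\,ds+O(\|\psi\|_{C^1}^2)=O(|\delta|)=O(\rho^2)$. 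Hence the total length changes by at most $-c_0\rho+C'\rho^2<0$ for all small $\rho$, contradicting minimality, so $P\notin\gamma$.

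The main obstacle I expect is not the geometric comparison, which is elementary once the neighborhood is developed, but two bookkeeping points: (i) justifying the local picture of $\gamma$ near $P$ — exactly two near‑radial prongs, no spiralling or stray components — which is where the constant‑curvature regularity does its work, and (ii) the area correction. Point (ii) is genuinely \emph{necessary}: when the prong angle $\phi$ is small the corner‑cut removes area of order $\rho^2\phi$ but length of order $2\rho$, and no purely \emph{local} modification near $P$ can both recover that area and keep the gain, so one really must spend $O(\rho^2)$ of length elsewhere — which is harmless precisely because the gain $c_0\rho$ is linear in $\rho$. One should also note that $P$ lies in the interior of $\Mbar$ (so the construction near $P$ meets no boundary), and check routinely that the competitor stays embedded and admissible for $\rho$ small.
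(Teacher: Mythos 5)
Your proposal is correct and follows essentially the same route as the paper's proof: cut the corner at $P$ across the side whose angle at $P$ is less than $\pi$ (which exists precisely because $\al<2\pi$), gaining length linearly in the cut scale, note that the area error is only quadratic, and restore the area constraint by a perturbation of quadratic length cost. The only cosmetic difference is that the paper repairs the area by bulging the remaining prong segments near $P$ into the other local component, while you perturb a smooth arc far from $P$; both are $O(\delta^2)$ and interchangeable.
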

Prescribing an area ratio corresponds to choosing a general mass parameter $m$ with $|m|<1$ in \eqref{ch2}(b). The case $m=0$ corresponds to equal areas. (For elliptic cones it was natural to consider only $m=0$ because of reflection symmetry.)

\begin{figure}[h]
\bce
\ig[width=0.3\tew]{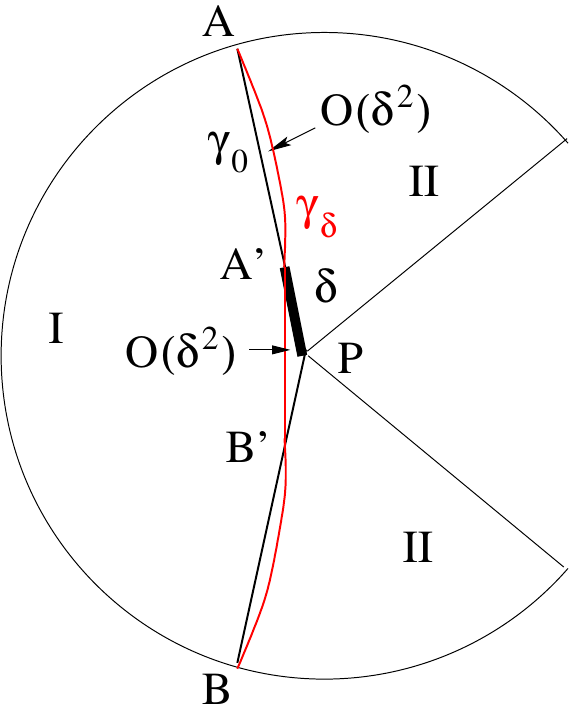}
\ece\vs{-3mm}
\caption{{\sm Modifying a tip interface $\ga_0$ to a shorter interface $\ga_\del$. 
\label{fgcf}}}
\end{figure}

\begin{proof}
Let $\gamma_0$ be a curve of minimal length satisfying the area constraint, and assume that it passes through the tip $P$. We  refer to Fig.~\ref{fgcf} for a sketch (compare Fig.~\ref{f7}(b)), 
and construct a shorter such curve $\ga_\del$.  
Consider a neighborhood $U$ of $P$ in $M$ of the form $(0,1)\times S^1$ (see Appendix \ref{coneHD}), on which $\gamma_0$ consists of a part $AP$ entering $P$ and a part $PB$ leaving $P$. Each part has constant curvature, since $\gamma_0$ is assumed to be of minimal length. In Fig. \ref{fgcf} 
they are depicted as straight lines for simplicity. 

Denote the  components of $U\setminus \gamma_0$ by $I$ and $II$; because the angle at $P$ is less than $2\pi$ at least one of them, say $I$, has an angle less than $\pi$ at $P$. 
For small $\delta>0$ let $A'$, $B'$ be points on $PA$, $PB$ at distance $\delta$ from $P$.

The main point is this: Replacing the part $A'PB'$ of $\gamma_0$ by the straight geodesic segment $A'B'$ reduces its length linearly, i.e.\ by $\geq c\delta$ where $c>0$.
On the other hand, the area of the triangle $A'PB'$ is only $O(\delta^2)$ since a $\delta$-neighborhood of $P$ has area $O(\delta^2)$, so the area of $I$ and $II$ is changed only by $O(\delta^2)$.

Therefore, we can replace the segments $AA'$ and $BB'$ by curves from $A$ to $A'$, and from $B$ to $B'$, respectively,
which bulge into $II$ by $O(\delta^2)$, so that the curve $\gamma_\delta$ 
obtained from this still satisfies the area constraint and has length $\ell(\gamma_0)-c\delta+O(\delta^2)$, which is less than $\ell(\gamma_0)$ for small $\delta>0$.
\end{proof}

\section{Minimizers on cones: Numerical illustrations}\label{numsec}
\subsection{The setup, and basic notions from bifurcation theory} \label{ncsec0}
To illustrate Theorem \ref{mthm1} and study 
the minimizers of \reff{ch2} for small $\eps>0$ in a singular setting we 
use the truncated cones of height $h$ and 
semi axes $1$ and $a\ge 1$ 
as in \eqref{cpara}.  
The metric determinant is $g=a^2+h^2(x^2+ay^2)/r^2$, 
and $dS=\sg\dd (x,y)$, and the corresponding
Laplace Beltrami operator is given by
\hual{
\Del u(x,y)=\sgi\biggl[&
\pa_x(\sgi (1+h^2y^2r^{-2})\pa_x u)-\pa_y(\sgi h^2xyr^{-2}\pa_x u)\notag\\
&-\pa_x(\sgi h^2xyr^{-2}\pa_y u)+\pa_y(\sgi(a^2+h^2x^2r^{-2})\pa_y u)
\biggr],\label{LBc}
}
where the coefficients in the divergence form are $L^\infty$,
but not $C^0$. 
We choose the standard double well potential
$W=\frac 1 4(u^2-1)^2$,  and the energy 
$$
E_\eps=\frac 1 {2\sig}\int_\Om \frac \eps 2 |\nabla u|^2+\frac 1 \eps W(u)\dd S, 
$$
where $\sig=\int_{-1}^1\sqrt{W/2}\dd u=\sqrt{2}/3$. To recall, our 
problem thus is 
\huga{\label{ch2b}
\barr{l}
{\rm (a)}\ \ G(u):=-\eps^2\Del u+W'(u)-\lam\stackrel!=0\text{ in }\Om, \quad \partial_\nu u=0\text{ at }\partial\Omega,\\
{\rm (b)}\ \  q(u):=\spr{u}-m\stackrel!=0. \earr
} 
We use the software \pdep\ \cite{p2p,p2phome} to discretize \reff{ch2b} 
by the Finite Element Method (FEM), and to treat the 
obtained algebraic system as a continuation and bifurcation problem. 
More specifically, we consider the weak form of (\ref{ch2b}a) and hence 
find critical points of $E_\eps$ in $H^1(M)$. 
For all $m$, \reff{ch2b} has the spatially homogeneous solution 
$u\equiv m$, $\lam=W'(m)$, i.e., we have the ``trivial branch'' $u\equiv m$, 
with $E_\eps(u)=\frac 1 {2\sig\eps} W(m)$. Starting at $m=1$ we 
first find branch points (BPs) from this trivial branch, and 
continue the bifurcating branches to $m=0$ at fixed $h,\eps>0$ and $a\ge 1$, 
 thus obtaining a 
selection of critical points $u$ of $E_\eps$ at $m=0$. 
Subsequently we 
continue some of these solutions $u_{\eps,h,a}$ in other parameters, 
including $\eps\to 0$, aiming to identify 
limits $u_0$ and the associated interfaces $I_0$, 
to check the formula $|I_0|=\lim_{\eps\to 0}E_\eps(u_{\eps,h,a})$, 
and to altogether identify minimal interfaces (depending on $h$ and $a$). 

By varying $a$ and $h$ we study 
the dependence of minimizers on the cone geometry; additionally, choosing 
$a\ne 1$ is useful to break symmetry: For $a=1$ we have a circular base, and 
altogether an $O(2)$ equivariant problem, which inter alia means that  
\bci 
\item all bifurcations from the spatially homogeneous branch with 
angular dependence are double, i.e.\ the linearization at the branch point has a 
two-dimensional kernel; 
\item to continue such branches we need a phase condition to uniquely 
choose solutions from the group orbit of rotations. 
\eci 
\pdep\ has methods to deal with these issues, but the situation 
is somewhat simpler and more generic if the rotational symmetry is broken. 
For elliptic cones, the BPs from the trivial branch are generically simple 
and hence we can apply the Crandall--Rabinowitz Theorem \cite{CR71} 
to obtain the bifurcating branches, while 
for circular cones we would need equivariant bifurcation theory 
(\cite[\S2.5]{p2p} and the references therein), and also the numerics 
would become slightly more involved.  
However, we still have three discrete symmetries: 
\hual{
\ga_1:\quad&(u(x,y),m,\lam)\mapsto(u(-x,y),m,\lam),\quad \Z_2\text{ symmetry 
over the $y$ axis;} \\
\ga_2:\quad&(u(x,y),m,\lam)\mapsto(u(x,-y),m,\lam),\quad \Z_2\text{ symmetry 
over the $x$ axis;}\\
\ga_3:\quad&(u(\cdot,\cdot),m,\lam)\mapsto(-u(\cdot,\cdot),-m,-\lam),\quad 
\text{ \btab{l}{$\Z_2$ symmetry in $u$, mass\\ and Lagrange multiplier.}}
}
These symmetries generate the symmetry group $\Ga$ of the 
problem via composition. 
$\ga_3$ can essentially be exploited to restrict to $m\le 0$, and 
the two mirror symmetries $\ga_1, \ga_2$ can be used to {\em a priori} 
determine whether bifurcations from simple BPs on the trivial branch 
are {\em transcritical} or {\em pitchforks}, see Remark \ref{bbrem}. 
\brem\label{brem}
{\rm For convenience, here we recall the basic examples (normal forms) 
for steady bifurcations, referring to \cite{p2p} and the references 
therein for further terminology and details, 
in particular \cite{kuz}.  Consider 
the following scalar bifurcation problems, with $\mu\in\R$ as a 
(generic) parameter. 

\bci 
\item[(a)] $f(u,\mu)=\mu-u^2=0$. This has the 
{\em solution branch} $u=\pm\sqrt{\mu}$ for $\mu\ge 0$, which shows a {\em fold} 
at $\mu=0$. This is also called {\em saddle-node} bifurcation as the 
lower branch $u=-\sqrt{\mu}$ contains saddles (unstable solutions) for the 
ODE $\dot u=f(u,\mu)$, while the upper branch contains stable nodes, 
see Fig.~\ref{f2}(a), which also shows the associated energy 
$E=-\mu+\frac 1 3 u^3$ such that $\dot u=-E'(u)$.  
\footnote{Any scalar ODE is a 
gradient system $\dot{u}=-E'(u)$, and if $E$ is bounded 
below and has only isolated critical points, then any solution must converge 
to a critical point of $E$.}

\item[(b)] $f(u,\mu)=\mu u+u^2$. For all $\mu\in\R$ we have the trivial solution 
$u=0$, and additionally the ``non--trivial branch'' $u=-\mu$. $u=0$ is stable 
(unstable) for $\mu<0$ ($\mu>0$), while $u=-\mu$ is stable (unstable) 
  for $\mu>0$ ($\mu<0$). Thus, at the {\em branching point} 
$(u,\mu)=(0,0)$ there is an {\em exchange of stability}. The 
bifurcating branch $u=-\mu$ exists on both sides of the critical value 
$\mu=0$ and hence the bifurcation is called {\em transcritical}, see Fig.~\ref{f2}(b). 

\item[(c)] $f(u,\mu)=\mu u-u^3$. As in (b), the trivial 
solution $u=0$ is locally unique, except at $\mu=0$, 
where  the non--trivial branches  $u=\pm\sqrt{\mu}$, $\mu>0$ bifurcate. 
Moreover, $u=0$ is stable (unstable) for $\mu<0$ ($\mu>0$), and 
$u=\pm\sqrt{\mu}$ are both stable. This is called a {\em supercritical 
pitchfork}, see Fig.~\ref{f2}(c), while for instance for $f(u,\mu)=\mu u+u^3$ we obtain a 
{\em subcritical} pitchfork with the bifurcating unstable 
branches $u=\pm\sqrt{-\mu}$. 
\eci 
\begin{figure}[ht]
  \bce 
\btab{p{48mm}p{48mm}p{48mm}}{
{\sm (a) fold}&{\sm (b) transcritical}&{\sm (c) pitchfork}\\
\btab{l}{\ig[width=44mm,height=36mm]{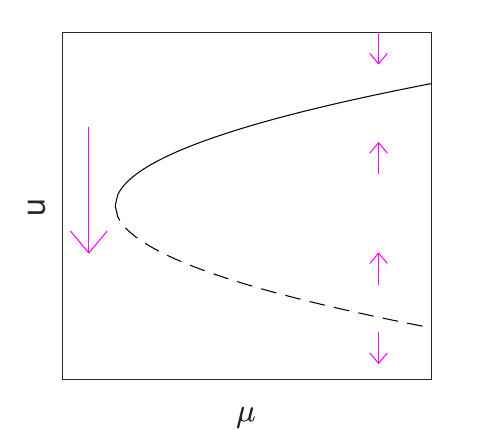}\\
\hs{-2mm}\ig[height=17mm]{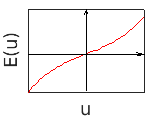}\hs{-1mm}\ig[height=17mm]{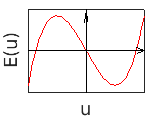}}
&\btab{l}{\ig[width=44mm,height=36mm]{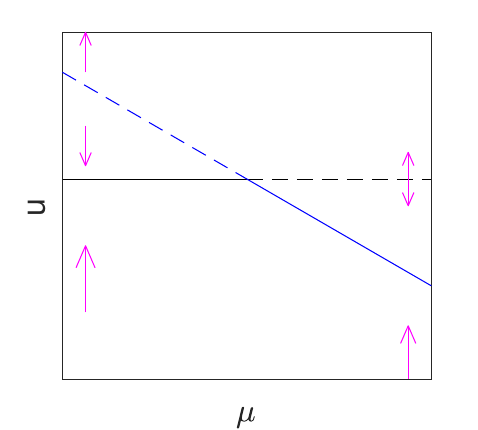}\\
\hs{-2mm}\ig[height=17mm]{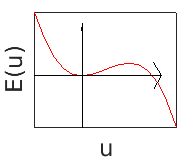}\hs{-1mm}\ig[height=17mm]{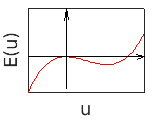}}
&\btab{l}{\ig[width=44mm,height=36mm]{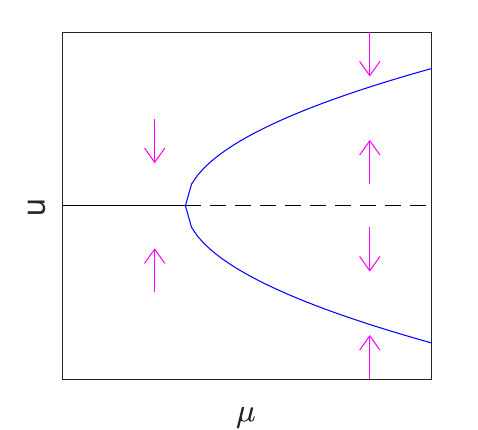}\\
\hs{-2mm}\ig[height=17mm]{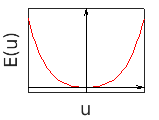}\hs{-1mm}\ig[height=17mm]{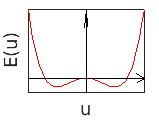}}}
\ece\vs{-4mm}
\caption{{\small (a-c) Elementary steady bifurcations. 
Full lines indicate branches of 
stable solutions and dashed lines indicate unstable branches, 
with the arrows 
indicating the flow of the associated ODE $\dot u=f(u,\mu)=-\pa_u E(u,\mu)$, 
with the energies indicated below the bifurcation diagrams at the 
respective left and right ends in $\mu$  
for each of the cases. 
}
  \label{f2}}
\end{figure}

(a),(b) and (c) (and further types of bifurcations) 
can occur along a single branch; for instance, $f(u,\mu){=}\mu u{+}u^3-u^5$ 
yields a subcritical pitchfork at $(u,\mu)=(0,0)$, with 
the nontrivial branches stabilizing in folds at 
$(u,\mu)=(\frac{\pm  1}{\sqrt{2}},\frac{-1}{4})$. 
The scalar normal forms typically 
arise as {\em bifurcation equations} 
via Liapunov--Schmidt reduction of, e.g., a PDE problem where along 
a solution branch a simple eigenvalue of the linearization 
goes through $0$ (bifurcation from simple eigenvalues),  
where a non-trivial kernel of the linearization is a necessary 
condition for (steady) bifurcation due to the implicit function theorem. 
Importantly, only the saddle--node bifurcation (a) is {\em generic}, 
while (b) and (c) require some special structure to occur generically, for 
instance the $\Z_2$ symmetry $u\mapsto -u$ in case of (c). Such 
symmetries often arise from physics, or in case of PDEs from the 
considered domain, again see \cite[\S2.5]{p2p} for further discussion, 
and \cite{GoS2002, hoyle}. Finally, note that we obtain the 
smooth plots in (a)--(c) because we consider 1D systems; in higher 
dimensions, for instance the ``shape of folds'' strongly depends 
on the chosen projection (the chosen functional on the ordinate) for 
plotting the BDs. See for instance Fig.\ref{f4}(a$_1$) vs Fig.\ref{f4}(d).}
\eex \erem 

\brem\label{bbrem}
{\rm 
In terms of the notions recalled in Remark \ref{brem}, the symmetries $\ga_1$ 
and $\ga_2$ have the following consequences for bifurcations of nontrivial 
branches from the trivial branch $u\equiv m$ in \reff{ch2b}: 
\bci 
\item If the solutions $u$ on the bifurcating branch satisfy exactly one of the conditions 
$\ga_1u=u$, $\ga_2u=u$ then the bifurcation must be a pitchfork, 
as together with $u$ also its partner $\ga_j u$ must bifurcate, $j=1,2$. This 
for instance applies to the branches yielding the tip (T1) and winding 
(T2) interfaces from Fig.~\ref{f1}. 
\item Solutions which violate both symmetries $\ga_1$ and $\ga_2$ 
cannot bifurcate from the homogeneous branch in a simple BP, i.e., 
they must arise in secondary bifurcations. (We do not further consider 
such solutions here). 
\item On the other hand, branches with solutions satisfying both symmetries 
$\ga_1u=u=\ga_2u$ (e.g., ``horizontal'' solutions like in Fig.~\ref{f1}(c), T3)  
generically bifurcate transcritically from the homogeneous branch. \eex
\eci 
}
\erem 

\brem\label{crem}
{\rm Besides the 1--parameter bifurcation problems from Remark \ref{brem}, 
in practice one often has to deal with $n\ge 2$--parameter problems, 
e.g., the 4--parameter $(m,\eps,h,a)$ problem \reff{ch2b}. In these, 
often one can fix $n{-}1$ parameters and let just one parameter vary, 
but there may be {\em co--dimension} $\ell$ points in parameter space, 
where one needs $\ell\ge 2$ parameters to capture the 
behavior of the system. One of the simplest examples is the so called 
{\em cusp catastrophe} \cite[\S8.2]{kuz}, with normal form 
$f(u,\mu_1,\mu_2)=-4u^3+2\mu_1 u-\mu_2$, see Fig.\ref{cfig}. For $\mu_1\le 0$ we 
always have exactly one stable solution  of $f(u,\mu_1,\mu_2)=0$ 
for all $\mu_2$, but for $\mu_1>0$ 
we have three solutions between the two fold--curves $\mu_2=\pm\mu_2(\mu_1)$, 
with the middle solution unstable and the two other solutions stable, 
and where the two fold curves form a cusp at the 
co--dimension two point $(\mu_1,\mu_2)=(0,0)$. 
To compute such cusps, it is often useful to compute the fold--curves 
by {\em fold point continuation} \cite[\S3.6.1]{p2p}, \cite{spctut}. 
See Fig.~\ref{f5} for an example for \reff{ch2b}. }\eex\erem

\begin{figure}[ht] \bce 
\ig[height=50mm]{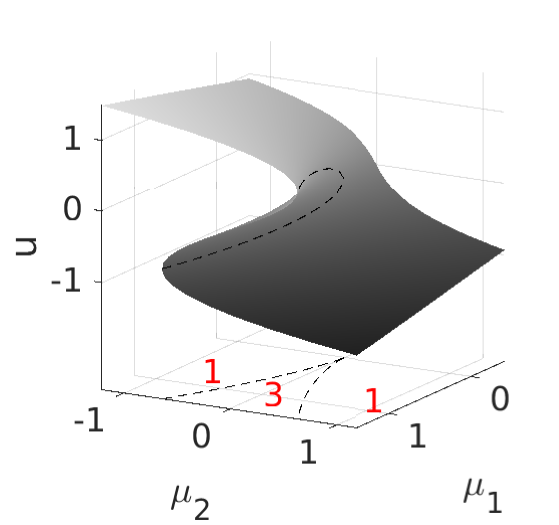}
\ece\vs{-4mm}
\caption{{\small The solution surface of $f(u,\mu_1,\mu_2):=-4u^3+2\mu_1 u-\mu_2=0$, and the projection $\mu_2=\pm\mu_2(\mu_1)$ of the fold curves on the 
$\mu_1$--$\mu_2$ plane, forming a cusp.}
 \label{cfig}}
\end{figure}

\subsection{Organization of the continuation in 
the different parameters} \label{covsec}
Given the remarks from \S\ref{ncsec0} and the results 
from \S\ref{Danielsec}, we organize the numerical 
continuation at $\eps>0$ as follows. 
\bci 
\item In Fig.\ref{f4} 
we compute the basic branches (in $m$) at fixed $(h,\eps)=(1,0.15)$ 
and $a=1.05$; the latter yields a small deviation from the circular 
cone to break the rotational invariance. 
\item In Fig.\ref{f5} we continue folds which occur 
in Fig.~\ref{f4} in $h$, which shows that the relation of T1 and T2 
solutions is similar to the cusp in Remark \ref{crem}. 
\item In Fig.\ref{f5b} we 
continue solutions at $m=0$ from Fig.~\ref{f4} 
to smaller $\eps$, essentially verifying the main analytical 
result that $u_{\eps}\to u_0$ for a limit function $u_0$, and  
$E_\eps(u_\eps)\to |I_0|$, as $\eps\to 0$.  
\item In Fig.~\ref{f6} we fix $\eps=0.1$ and $m=0$, and study the 
dependence of T1, T2 and T3 solutions (and their energies) on $h$ and $a$. 
\eci  

One main result is that at small $\eps>0$ we generally get results 
similar to Fig.~\ref{f7}(a), see for instance Fig.~\ref{f6}(b), 
but also important differences: At small $\eps>0$, a tip interface 
is a minimizer for sufficiently large $a$ (depending on $h$), and 
the winding T2 interface does not exist (is not a critical point), 
Fig.~\ref{f6}(e,f), but for $\eps\to 0$ the T2 interface ``appears''  
(bifurcates from the T1 branch) and becomes the minimizer.

\subsection{The basic branches at $(h,a,\eps)=(1,1.05,0.15)$}
\label{ncsec1} 
In Fig.~\ref{f4} we  run ``continuation in $m$''   
at fixed $(h,a,\eps)=(1,1.05,0.15)$. 
This means that $m$ (additional to the Lagrange multiplier $\lam$ for 
the mass constraint $\spr{u}=m$) 
is the free parameter, which may move back and forth, as all continuation 
is done in so--called arclength parametrization \cite{p2p}. 

The black branch {\tt hom} in the 
bifurcation diagram (BD) in (a$_1$) represents 
the homogeneous solutions $u\equiv m$ with $\lam{=}W'(m)$ and 
$E(u){=}\frac 1 {2\sig \eps}W(m)$. 
On {\tt hom} we find (at this relatively large 
$\eps$) 14 BPs up to $m{=}0$, and we follow the 
 branches bifurcating 
\bci 
\item at BP1 (b1, blue, containing type T1 {\em and} T2 at 
$m=0$, see below 
for further discussion), 
\item at BP2 
(b2, green, like b1 but rotated by 90$^\circ$), 
\item at BP3 (b3, red, containing 
type T3 at $m=0$), 
\item and at BP5 (b5, magenta); this is for illustration of just 
one ``higher order branch''. 
\eci 
In the following we shall focus 
on branches b1 and b3.  
In the BDs, 
thick lines indicate stable parts of branches (local minima), 
and thin lines indicate unstable parts (saddles). 
The BD in (a$_1$) and those following are essentially 
verbatim outputs of \pdep\ scripts, with some postprocessing 
to adjust labels, subsequently used as identifiers in solution plots. 
We start the labeling with A at $m=0$ in panel (a$_2$) 
as $m=0$ will be our 
interest in the following pictures, while labels I--K are 
for further illustration only. 
Open circles indicate detected BPs.  
In (a$_2$) we zoom in on the branches b1, b2 near $m=0$, and in 
(a$_3$) on b3 near $m=0$. 
In (b,c) we give sample solutions as labeled in (a), and in (d) 
we present (a$_1$) for $m\in(-0.8,-0.45)$ 
in another projection, namely $\max(u)-m$.%
\footnote{While this more clearly shows the structure of solutions near 
bifurcation, in the following we restrict to plotting $E_\eps$, 
as this is the quantity of interest.} 
The {\tt b1} solutions fulfill $\ga_2$ (reflection in $y$), but not 
$\ga_1$, and hence {\tt b1} must bifurcate in a pitchfork. 
The bifurcation is subcritical but the branch stabilizes in a first 
fold near $m=-0.79$.  
On the other hand, solutions on {\tt b3} fulfill $\ga_1$ and $\ga_2$ 
and bifurcate transcritical, as expected. Here, the subcritical  
part (to more negative $m$) also folds back and stabilizes. 

\begin{figure}[h]
\bce
\btab{l}{
\btab{lll}{{\sm (a$_1$)}&{\sm (a$_2$)}&{\sm (a$_3$)}\\
\hs{-3mm}\ig[width=0.32\tew]{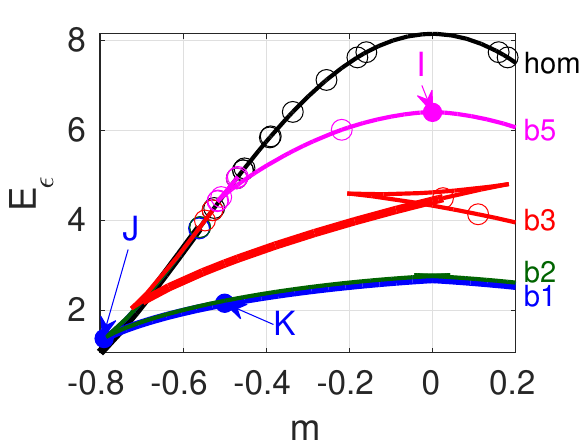}&\hs{-0mm}\ig[width=0.28\tew]{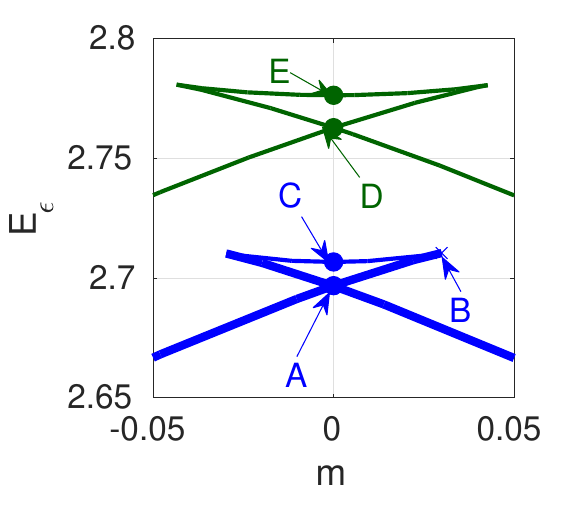}
&\hs{-3mm}\ig[width=0.35\tew]{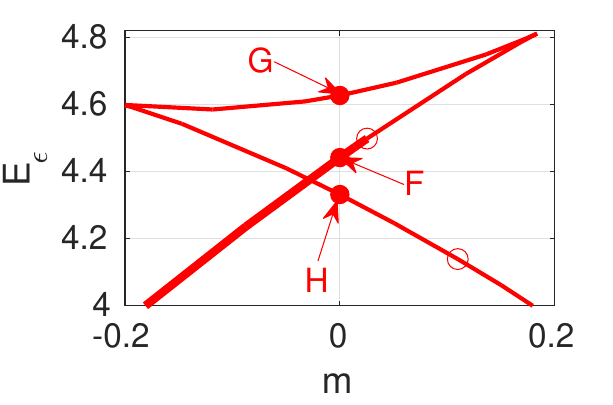}}\\
{\sm (b)}\\
\hs{-0mm}\ig[width=0.24\tew]{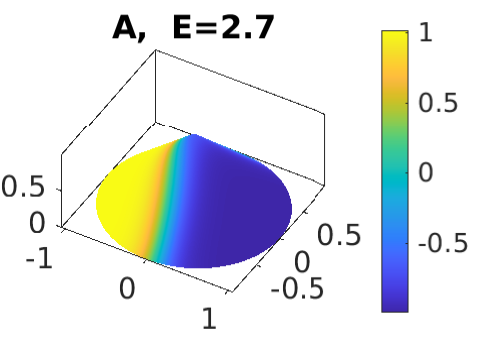}\hs{-0mm}\ig[width=0.14\tew]{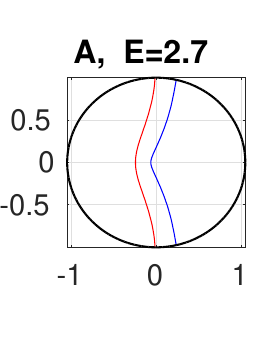}
\ig[width=0.14\tew]{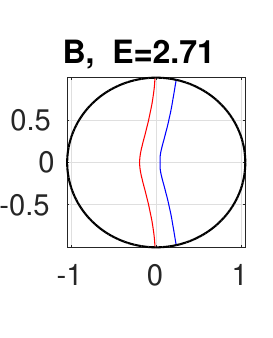}
\ig[width=0.14\tew]{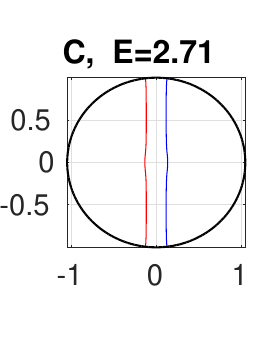}\ig[width=0.14\tew]{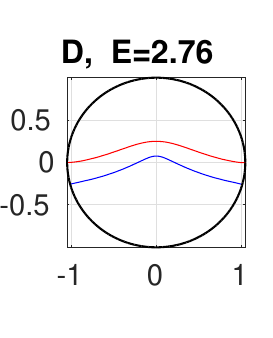}
\ig[width=0.14\tew]{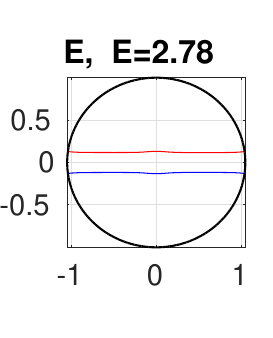}
\\
\btab{ll}{{\sm (c)}&{\sm (d)}\\
\rb{20mm}{\btab{l}{
\ig[width=0.14\tew]{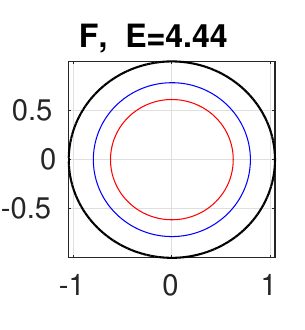}\ig[width=0.14\tew]{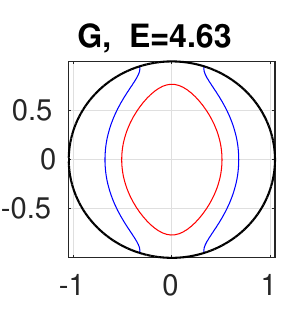}
\ig[width=0.14\tew]{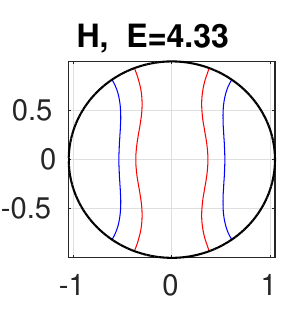}\\[-3mm]
\hs{-2mm}\rb{3mm}{\ig[width=0.22\tew]{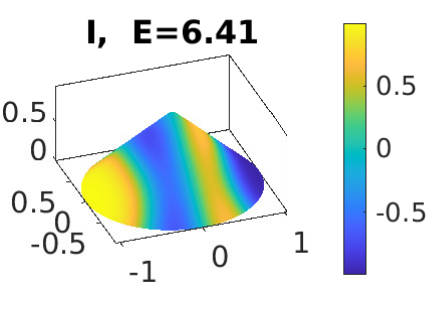}\hs{2mm}
\ig[width=0.22\tew]{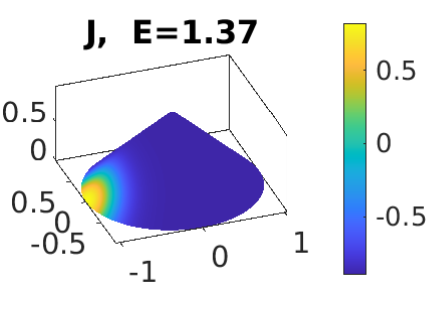}\hs{2mm}\ig[width=0.22\tew]{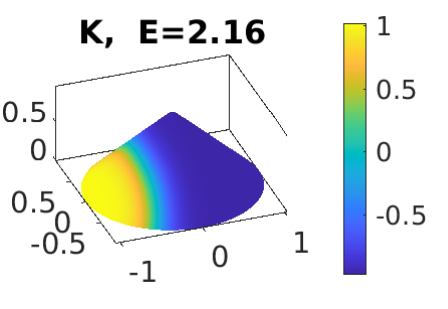}
}}
}
&\hs{-5mm}\ig[width=0.25\tew]{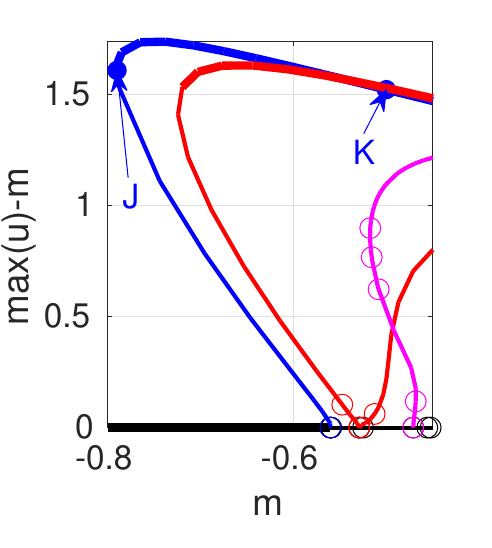}}
}
\ece\vs{-5mm}
\caption{{\small (a) BD for continuation in $m$, for $(h,a,\eps)=(1,1.05,0.15)$, 
with two zooms near 
$m{=}0$. 
Primary branches bifurcating from $u\equiv m$ (black branch hom 
in (a$_1$)): b1 (blue), b2 (green), b3 (red), and b5 (magenta). 
(b,c) Sample solutions  from (a), as indicated by labels. 
(d) same as (a$_1$), plotting $\max(u)-m$ over  
$m$ (near $m=-0.6$) for b1, b3, and b5. 
\label{f4}}}
\end{figure}

The first sample plot A in (b) shows the solution on the cone, 
and also gives the energy $E_\eps$ in the title.   
The second sample in (b) shows the contour lines $u=0.5$ (red) and 
$u=-0.5$ (blue) for the same solution A, with the base of the cone indicated 
in black, and subsequently we mostly use this plot--style. 
The sample B shows $u$ at the fold point at $m\approx 0.03$, which 
will be further discussed in Fig.~\ref{f5}. Together, A--C 
show that b1 approaches $m=0$ from the left as type T2 solution, 
and after the fold turns into a T1 solution C with a slightly 
larger energy. Subsequently, b1 continues symmetrically through the left fold 
and again past $m=0$ to the symmetric BPs on {\tt hom} at $m>0$. 
Essentially the same 
happens on b2, but rotated by 90$^\circ$ such that the 
T1 interface in E is along the $a=1.05$ semiaxis, with hence 
a (slightly) larger energy $E$ than in C. Thus, for b1 and b2 the T1 solution is related 
to the T2 solutions via folds, similar to the ``middle surface'' between 
the two fold--curves in Fig.\ref{cfig}, and in Fig.\ref{f5} we shall 
illustrate the cusp underlying this. 

The samples F--H in (c) show three passages of b3 through $m=0$. 
As already said, the bifurcation of b3 is 
transcritical as the solutions on it fulfill both, $\ga_1$ (reflection in $x$) 
and $\ga_2$ (reflection in $y$), but in (a$_1$) and (a$_3$) we only 
show the subcritical part; this returns supercritically 
to the symmetric BP at $m>0$. The most relevant sample 
on b3 at $m=0$ is the T3 solution F, as it has the lowest energy 
among F--H, and is in fact locally stable. 

Sample I in (c) 
belongs to b5. This is only meant as just one example of the 
many further solutions of \reff{ch2b}, containing more interfaces 
than the basic types T1, T2 and T3, and hence not expected to be 
energy minimizers also when varying other parameters. In particular, 
for $\eps\to 0$ such solutions may turn into so called $2n$--end types 
\cite{KLP15}, $n>1$, i.e., 
they may contain points where $2n\ge 4$ different phase domains 
$u=\pm 1$ meet. Specifically, sample I contains two four--end 
points for 
$\eps\to 0$, see I in Fig.\ref{f5b}(b) for illustration. 
Finally, samples J (at the first fold of b1) and K in panel (c) 
are meant to illustrate how b1 proceeds from negative $m$ to $m=0$ 
first reached at A; solutions on the other branches behave 
correspondingly. 

\subsection{Fold--continuation in $h$}\label{ncsec2}
In Fig.\ref{f5} we show the continuation of the fold 
at $m=m_0\approx 0.03$ on b1 (sample B in Fig.\ref{f4}). 
 In brief, this illustrates the 
cusp structure of the relation between T1 and T2 solutions. 
In detail, on the part 
containing B in Fig.\ref{f5}, the fold location increases with $h$, 
see sample B$^+$, which also 
implies that the T1 and T2 solutions separate more strongly with increasing $h$. 
Conversely, decreasing $h$ from $h=1$ the fold position goes to $m=0$, 
where it continues as the symmetric fold to sample ${\rm B}^-$. 
In particular, 
the collision of the two folds at the cusp $(m,h)=(0,h_c)$, $h_c\approx 0.8$, 
means that for $h<h_c$ we only have one solution of type T1 or T2, and by 
symmetry this must be of type T1 (straight interface). This is an important 
difference to Fig.\ref{f7}(b), where at $\eps=0$ the type T2 interface 
exists for all $h>0$ and coincides with T1 only in the limit $h\to 0$. 
The cusp is thus a finite $\eps$--effect, and indeed moves to smaller $h_c$ 
for decreasing $\eps$ (not shown). A further consequence, discussed 
in Fig.\ref{f6} below,  is that for finite $\eps$,  
when varying $h$ at $m=0$  the T2 solutions must bifurcate from the T1 branch 
at $h=h_c$. 
 
\begin{figure}[h]
\bce
\ig[width=0.17\tew]{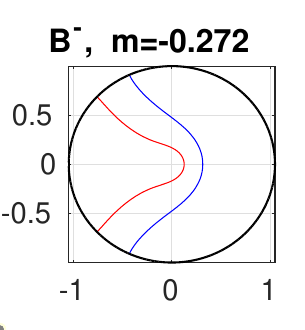}\quad\ig[width=0.24\tew]{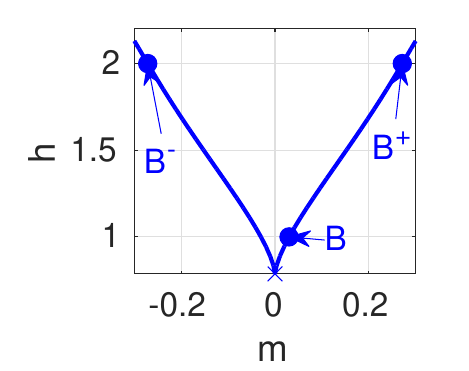}\quad
\ig[width=0.17\tew]{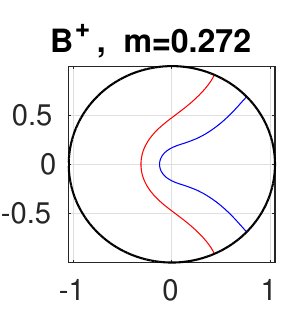}
\ece\vs{-4mm}
\caption{{\small Continuation of the fold position $m$ of the 
fold B from Fig.\ref{f4} in height of 
cone $h$,  
$(a,\eps)=(1.05,0.15)$. 
\label{f5}}}
\end{figure}

\subsection{Continuation in $\eps$}\label{ncsec3}
In Fig.\ref{f5b} we continue selected solutions from Fig.\ref{f4} to smaller 
$\eps$, aiming to verify that $E_\eps(u_\eps)\to |I_0|$ as $\eps\to 0$. 
Decreasing $\eps$ is numerically challenging as it requires repeated and 
strong mesh refinement near the interface. In the BD in (a) 
we show $E_\eps$ over $\eps$ for the continuation of the T2 sample A from 
Fig.\ref{f4} (orange branch), and for the T1 sample C 
from Fig.\ref{f4} (blue branch). 
The blue branch (straight 
interface along the short semi-axis of length 1) should 
limit to $2\sqrt{2}\approx 2.82$ (independent of $a$), 
and $E_\eps(u_\eps)$ 
initially strongly increases and reaches $E_\eps(u_\eps)\approx 2.795$ 
at $\eps=0.025$. 
However the numerics become increasingly harder at small $\eps$, 
and we stop at $\eps=0.025$, where we have adaptively refined 
from $n_t\approx 6000$ (at $\eps=0.15$) to 
$n_t\approx 60000$ triangles in the FEM mesh. 

\begin{figure}[h]
\bce
\btab{ll}{{\sm (a)}&{\sm (b)}\\
\hs{-3mm}\rb{-3mm}{\ig[width=0.27\tew]{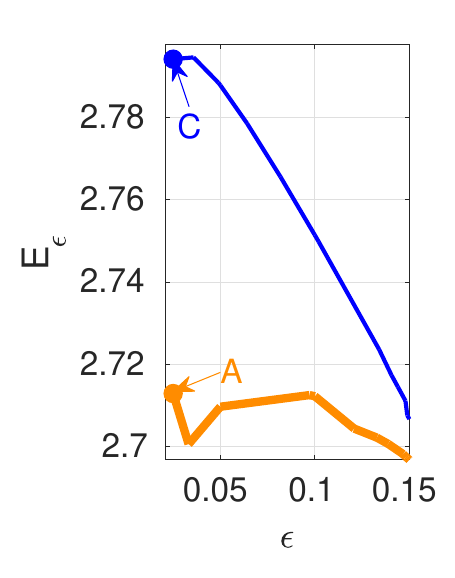}}
\rb{23mm}{\btab{l}{
\ig[width=0.18\tew]{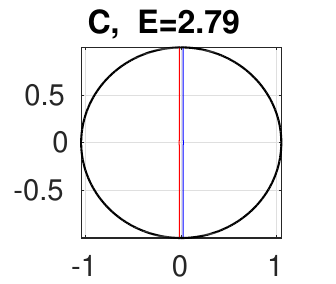}\hs{2mm}\\[-4mm]
\ig[width=0.18\tew]{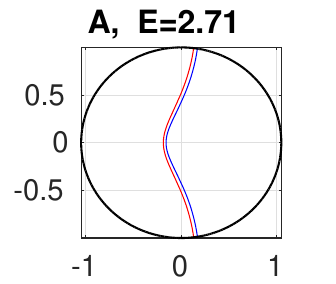}
}}
&\hs{-5mm}\ig[width=0.24\tew]{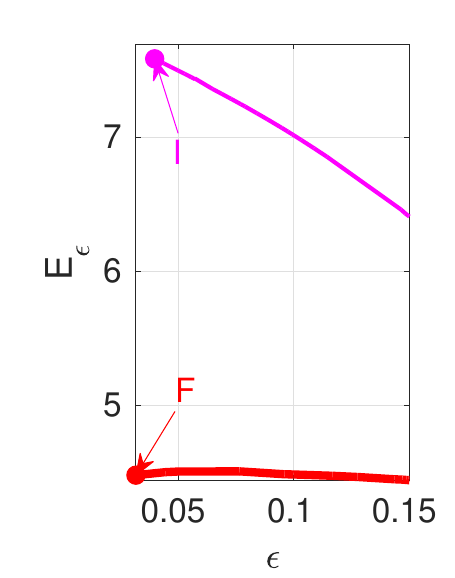}
\rb{23mm}{\btab{l}{
\ig[width=0.21\tew]{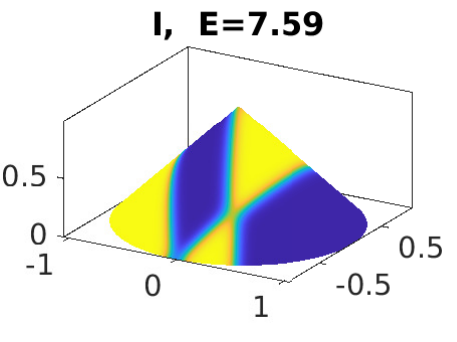}
\\[-2mm]
\ig[width=0.18\tew]{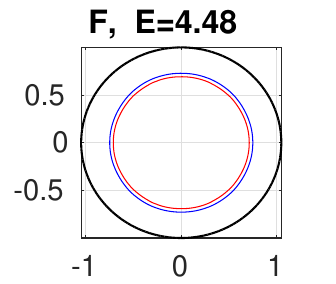}}
}}
\ece\vs{-5mm}
\caption{{\small (a)  Continuation in $\eps$ of the T2 (orange branch) 
and T1 (blue branch) 
solutions at $m=0$ from Fig.\ref{f4}(a$_2$). (b) same for solutions F and I 
from  Fig.\ref{f4}. 
\label{f5b}}}
\end{figure}

Note that, as observed in \cite{Ton05}, the minimizing interfaces
in dimension $2$ cannot have self-intersections. This does not contradict
Figure \ref{f5b} (b), since the illustrated interfaces are critical points but not minimizers. 

For $a=1$, the orange branch should limit to $E_0\approx 2.71$, 
see Fig.\ref{f7}(a). This will not 
change significantly for small $a{-}1>0$ as the interface is close to 
the short semi-axis of length 1 (see also Fig.~\ref{f6} below for general 
dependence of branches on $a$), and with 
the refinement to $n_t\approx 35000$ in A at $\eps=0.025$ we believe 
that we obtain a good approximation of $l_2(h)$.  
The sample plots show the expected behavior of the solutions, i.e., 
the interfaces (of width $\eps$) steepen up and hence the $u=\pm 0.5$ 
level lines in the samples move closer together. 
In (b), sample F shows the continuation of F from Fig.\ref{f4}  to 
$\eps=0.04$, and I the analogue for I from Fig.\ref{f4}, 
illustrating the two 4--end points 'expected' (see Remark \ref{sirem}b))  
in the limit $\eps\to 0$.

\brem\label{sirem}{\rm a) Theorem \ref{mthm1} does {\em not} apply 
to the blue branch, but down to $\eps=0.025$ the numerics suggest the 
convergence of $u_\eps$ to the tip--interface $I_0$, see also 
Remark \ref{ueexrem}(b) and the discussion after Fig.\ref{f1}. 
However, for smaller $\eps$ it becomes difficult 
to maintain the orientation of the interface in C, i.e., depending on the 
mesh small rotations of the interface may set in, and thus we stop the 
continuation. In any case, {\em if} we assume the convergence of 
$u_\eps\to u_0$ for $\eps\to 0$, 
then the blue branch shows that the convergence is {\em slower} than for 
branches 
(such as T2 and T3) on which interfaces avoid the tip. Moreover, 
this effect becomes stronger for more pointed cones, see 
Fig.~\ref{f6}(c), where we discuss the behavior of the T1 interface 
in dependence of $\eps$ in more detail for a cone of height $h=3$. 

b) We also have no proof of convergence for the interfaces 
on the magenta branch containing I, which again contains 
a straight segment through the tip. Other ``higher order branches'' 
with solutions with several phase domains but for which the interfaces 
avoid the tip show a better convergence behavior, similar to the branches 
containing A and F. 
}\eex\erem

\subsection{Continuation in $h$ and $a$, and again in $\eps$}
\label{ncsec4}
In Fig.\ref{f6} we aim to study the dependence of the three main 
types of solutions on $a$ and $h$. 
We initially fix the ``intermediate'' $\eps=0.1$, and in (a) 
we restart similar to Fig.\ref{f4} 
with the continuation in $m$ at $h=0.25$. As expected from Fig.\ref{f5}, 
the primary bifurcating branch b1 (blue) then goes horizontally through 
$m=0$ with a T1 type solution at $m=0$, i.e., the loop containing 
B and C in Fig.\ref{f4}(a$_2$) no longer exists. 

\begin{figure}[h]
\bce
\btab{l}{
\btab{ll}{
{\sm (a)}&{\sm (b)}\\ 
\hs{-2mm}\ig[width=0.23\tew]{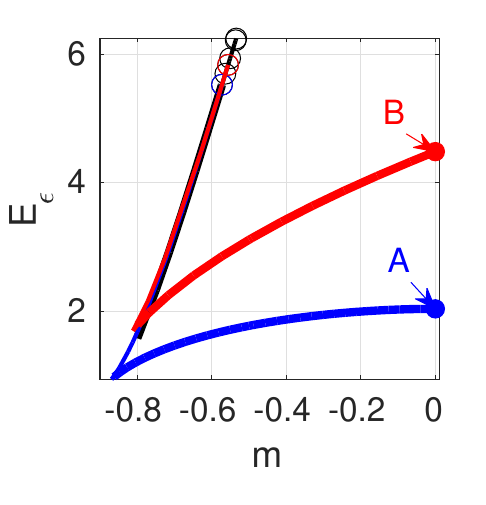}
\rb{14mm}{\btab{l}{\ig[width=0.13\tew]{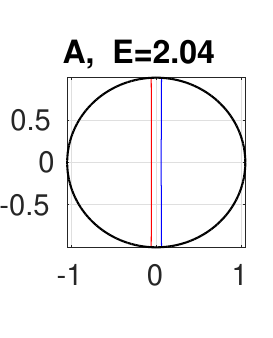}\\[-5mm]
\ig[width=0.13\tew]{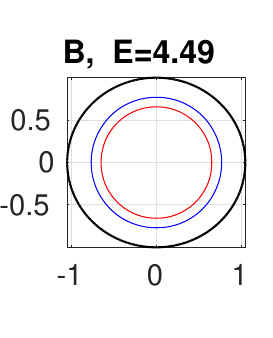}}}&
\hs{-4mm}\rb{-5mm}{\ig[width=0.26\tew]{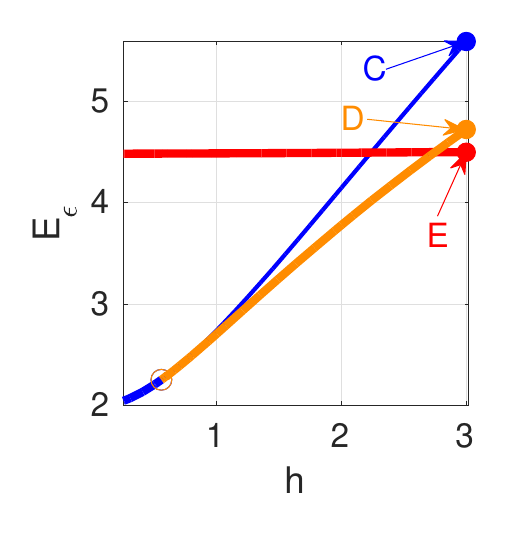}}
\hs{-4mm}\ig[width=0.22\tew]{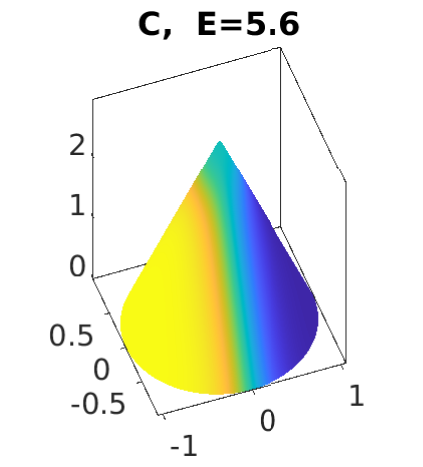}
\hs{-4mm}\rb{14mm}{\btab{l}{\ig[width=0.13\tew]{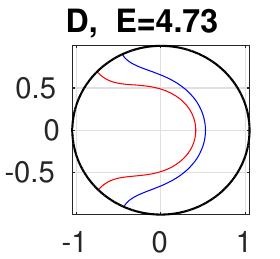}\\[0mm]
\ig[width=0.13\tew]{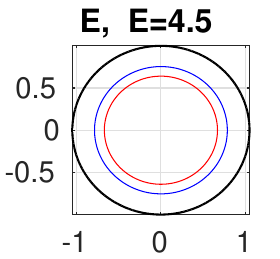}}}}
\\[-2mm]
\btab{ll}{{\sm (c)}&{\sm (d)}\\
\hs{-2mm}\ig[width=0.24\tew]{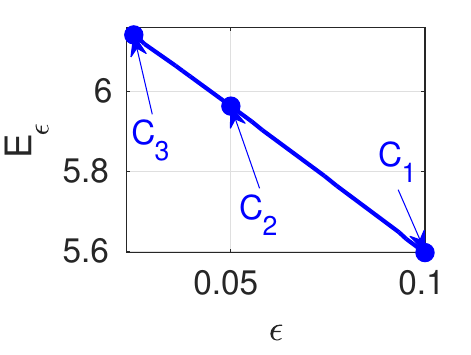}
\rb{2mm}{\ig[width=0.18\tew]{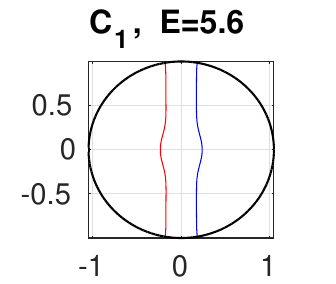}
\hs{-2mm}\ig[width=0.18\tew]{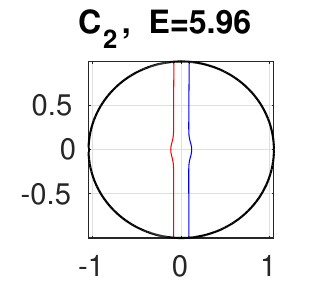}
\hs{-2mm}\ig[width=0.18\tew]{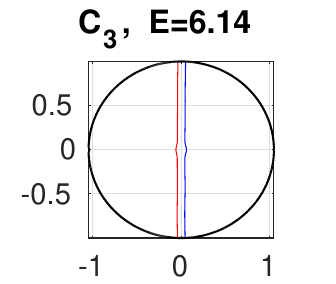}}
&\hs{-2mm}\ig[width=0.2\tew]{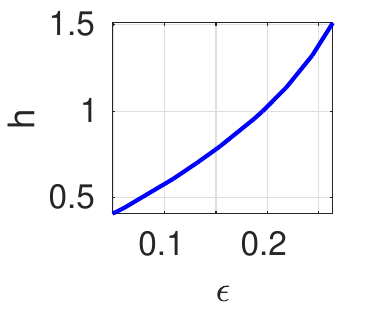}
}
\\
\btab{ll}{
{\sm (e)}&{\sm (f)}\\
\hs{-3mm}\ig[width=0.24\tew]{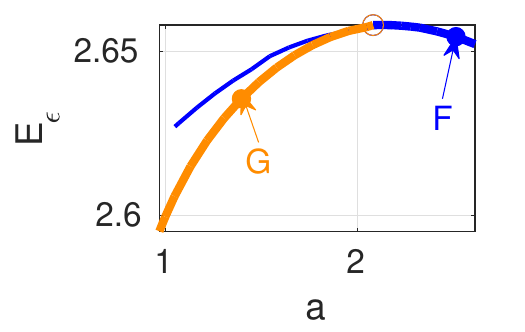}\hs{0mm}
&\rb{0mm}{\ig[width=0.27\tew]{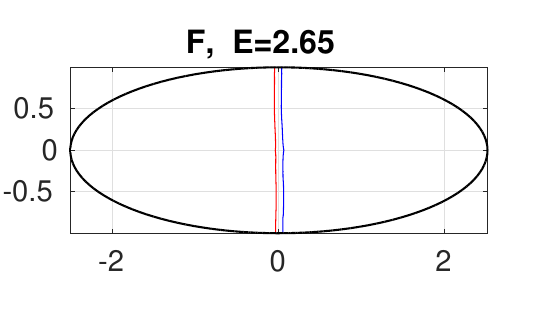}
\hs{-3mm}\ig[width=0.25\tew]{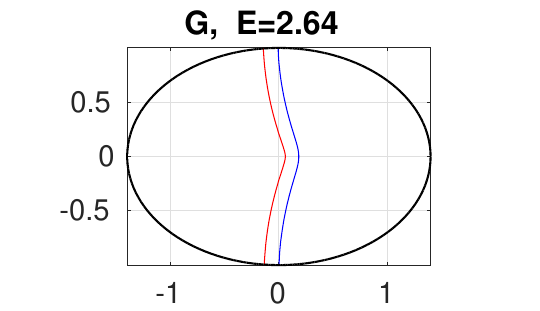}\hs{-5mm}\ig[width=0.25\tew]{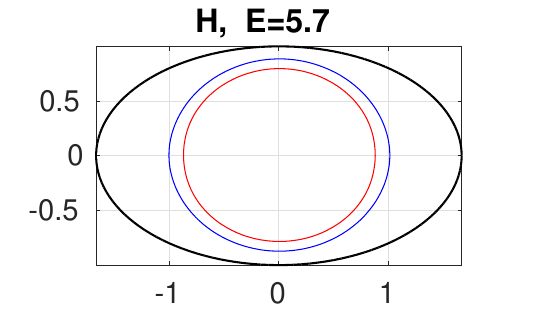}}
}}
\ece\vs{-3mm}
\caption{{\small (a) T1 and T3 branches at $(h,a,\eps)=(0.25,1.05,0.1)$. 
(b) Continuation of T1 and T3 at $m=0$ from (a) in $h$. The T2 
branch (magenta) bifurcates at $h=h_c=\approx 0.72$ from T1. 
The T3 solutions are global minima for $h\ge h_0\approx 2.7$.   
(c) Continuation of C from (b) in $\eps$, samples at $\eps=0.1$ (same as C), 
at $\eps=0.05$, and at $\eps=0.025$. 
(d) Branch point continuation of the BP at $h_c$ from (b). 
(e,f) continuation of T1 from (b) at $h=0.9$ in $a$. T2 reconnects 
to $T_1$ at $a=a_0\approx 2.2$. The last plot in (d) also shows 
a continuation of T3 from (a) in $a$. 
\label{f6}}}
\end{figure}

In (b) we then continue 
samples A and B from (a) in $h$. For the T3 (red) branch, 
 the energy $E_\eps$ compares reasonably well with the length 
$l_3$ from Fig.\ref{f7}(a), where again we note that the deviation 
due to finite $\eps=0.1$ is larger than the one due to $a-1=0.05>0$. 
Namely, for the T3 branch $E$ should not depend 
on $h$, and should be close to $\sqrt{2}\pi\approx 4.44$ 
(the limit $\eps\to 0$ for $a=1$), and this holds reasonably well. 
On the other hand, for the tip--interface branch branch T1 (blue) 
$E=5.6$ in sample C at $h=3$, while 
$l_1(3)=2\sqrt{1+3^2}\approx 6.32$ (independent of $a$), i.e., the 
more pointed tip here induces a strong deviation even at relatively 
small $\eps=0.1$. Therefore, in (c) we continue C from (b) to smaller 
$\eps$, where at $\eps=0.025$ in C$_3$ we have reached $E_\eps=6.14$. 
The contour plots in (c) show that for tip--interfaces 
the level lines $u=\pm \del\ne 0$ (here $\del=0.5$) detour the 
tip for $\eps>0$, and this becomes more pronounced 
for more pointed cones (i.e., larger $h$, 
compare samples C,E in Fig.\ref{f4} with $(h,\eps)=(1,0.15)$, C in Fig.\ref{f5b} with $(h,\eps)=(1,0.05)$, 
A in Fig.\ref{f6}(a) with $(h,\eps)=(0.25,0.1)$, and C$_1$ to C$_3$ with 
$h=3$ and $\eps=0.1, 0.05, 0.025$). Thus, this effectively 
quantifies the effect of the strength of the conical singularity, 
yielding a slower convergence. 

The most important difference between Fig.\ref{f6}(b) and 
Fig.\ref{f7}(a) is that the area--halving circular arc  in Fig.\ref{f7}(a)  
exists for any $\al\in[0,2\pi)$, and becomes straight for 
$\ds\al=\frac{2\pi}{\sqrt{1+h^2}}\to 2\pi$, 
i.e., $h\to 0$. On the other hand, the T2 solutions in Fig.~\ref{f6}(b$_2$) 
with $\eps=0.1$ 
only exist for $h\ge h_c\approx 0.7$, where the (orange) T2 branch  
bifurcates from the T1 branch in a supercritical pitchfork. 
Again, this is due to the finite $\eps$, and $h_c$ decreases 
with $\eps$: For $\eps=0.15$ 
we know from Fig.\ref{f5} that the BP is given by the cusp of the 
fold point continuation and sits at $h\approx 0.8$, 
and in (d) we altogether show the BP location $h$ over $\eps$ as 
obtained from branch point continuation \cite[\S3.61.]{p2p}, where we 
need to stop near $\eps=0.04$ due to convergence problems. 

Finally, in (e,f) we show the continuation of the T1 (blue) and T2 (orange) 
solutions from (b) at $h=0.9$ in the ellipticity $a$.%
\footnote{We also continue 
the T3 solutions, but for these $E$ increases rather quickly from 4.5 
and the branch cannot be plotted reasonably together with the T1 and T2 
branches; hence we only give the sample H at $a\approx 1.7$, with 
$E\approx 5.7$.} 
For both, $E$ only depends weakly on $a$; in fact, for T1 
we should have $\lim_{\eps\to 0}E_\eps=2\sqrt{1+0.9^2}\approx 2.69$ 
independent of $a$, 
but as in Fig.\ref{f5} this requires extensive mesh-adaption, additional 
to the mesh--adaption already needed in Fig.\ref{f6}(d) for increasing $a$. 
Importantly, as we continue b1 we gain stability at a BP near 
$a=a_c\approx 2.2$, 
where the orange stable T2 branch bifurcates in a 
subcritical pitchfork. 
Alternatively, if we continue the T2 solution at $h=0.9$ from (b) in $a$, 
then the obtained branch coincides with the orange branch in (e), and 
hence ``reconnects''  
to the T1 branch at $a=a_c$. This intuitively makes sense as we expect 
the T2 solutions to approach the T1 solutions for large $a$, and 
the T1 solutions to become global minimizers. However, again this also 
depends on $\eps$, and if we decrease $\eps$ the BP (or reconnection point) 
in (e) moves to larger $a$ (not shown).  

This, and further numerical experiments fully agree with Prop.~\ref{prop:tip interface}, stating that tip interfaces are never minimizers at $\eps=0$: 
while for small $\eps>0$ and fixed $h>0$, 
T1 interfaces are the global minimizers for sufficiently large $a$, 
for $\eps\to 0$ at any fixed $a$ they lose stability to the T2 interfaces.

\appendix

\section{Appendix: Auxiliary aspects of geometric measure theory}\label{section-appendix}

\subsection{Functions of bounded variation and the co-area formula}

Functions of bounded variation are suitable for the study of our 
$\Ga$--convergence. 
They are defined on so-called {\em good} metric measure spaces, and 
there are two different gradient norms that can both be put into relation. 
With both gradient norms and the Hausdorff measure, 
one can define the perimeter and establish the co-area formula. Our main sources are \cite{Vol,Mir,Nic,FlRi}.

\subsubsection{Metric measure spaces and Hausdorff measure}

\begin{Def}
Let $(X,\mathfrak{d},\mu)$ be a metric measure space with metric $\mathfrak{d}$ and measure $\mu$. It is called {\em good} if
$(X,\mathfrak{d})$ is complete, and $(X,\mathfrak{d},\mu)$ is doubling, i.e. if $B_R(x)$ denotes a ball in $X$ of radius $R$ 
centered at $x$, then there exists a uniform constant $c > 0$ such that for any $x \in X$ and any $R>0$
$$\mu(B_{2R}(x)) \leq c\cdot \mu(B_R(x)).$$
\end{Def}

A central example of such a good metric measure space in our context is
a compact Riemannian manifold. The Riemannian metric defines the distance $\mathfrak{d}_g$ 
and the Riemannian Lebesgue measure $d\vol_g$. Then $(M,\mathfrak{d}_g, d\vol_g)$ is a good metric measure space
in the sense of the definition above.
More generally, compact manifolds with boundary and conical singularities, see Definition \ref{def:mfd con}, are good metric measure spaces.
 More generally, compact stratified spaces yield good metric measure spaces.

\begin{Def}[Hausdorff measure] ~\\
Let $(X,\mathfrak{d},\mu)$ be a metric measure space. For any subset $\Omega \subset X$, any $s\in [0,\infty)$ define the $s$-dimensional Hausdorff measure
\begin{equation*}\label{eq:HausdorffMeasureDef}
    \cH_d^s(\Omega):= \ds\sup_{\delta >0 }\left[ \inf \left\{ \frac{\pi^{s/2}}{\Gamma(3/2)}
    \ds\sum_{j=1}^\infty \left( \frac{\diam_{\mathfrak{d}} \Omega_j}{2}\right)^s: 
    \Omega\subset\bigcup\limits_{j=1}^\infty \Omega_j ;\, \diam_{\mathfrak{d}} \Omega_j \leq \delta \right\} \right], 
\end{equation*}
where $\diam_{\mathfrak{d}} \Omega_j$ is the diameter of the smallest metric ball containing $\Omega_j$, and $\Gamma(s)$ is the Gamma-function.
\end{Def}

The constant $\frac{\pi^{s/2}}{\Gamma(3/2)}$ ensures that the Hausdorff measure corresponds to the Lebesgue measure 
on smooth Riemannian manifolds $(M,g)$ if $s=\dim M$. In that case, the Hausdorff measure 
$\cH_g^s(\Omega)$ coincides with 
the Riemannian measure of a Borel set $\Omega \subset M$, cf. e.g. \cite[Theorem 2.17]{Vol}.

\subsubsection{Functions of bounded variation}
\begin{Def}[Gradient for locally Lipschitz functions] ~\\
Let $(X,\mathfrak{d},\mu)$ be a metric measure space. For any open $\Omega\subset X$ 
consider the space $\Lip_\loc (\Omega)$ of locally Lipschitz functions 
$u:{\Om}\rightarrow \R$. For such $u$ and any $x\in X$ we define
\begin{equation*}
    \lVert \nabla u \rVert (x):= \ds\liminf_{\rho \rightarrow 0}{\left[\ds\sup_{y\in \overline{B_\rho (x)}}\frac{\lvert u(x)-u(y)\rvert}{\rho}\right]}.
\end{equation*}
\end{Def}

\noindent One can define the gradient norm for locally integrable functions $L_\loc^1(\Omega)$
on some open $\Omega$. 

\begin{Def}[Gradient of locally integrable functions]\label{def:GradLocLip}

Let $(X,\mathfrak{d},\mu)$ be a metric measure space. Let $\Omega{\subset}X$ be open and bounded, and 
    $u{\in}L_\loc^1(\Omega)$. Then
    \begin{equation*}
    \begin{split}
    \lVert \nabla u \rVert (\Omega) &:= \inf \left\{ \ds\liminf_{n\rightarrow\infty}
    {\ds\int_\Omega \lVert \nabla u_n \rVert (x) \, d\mu (x)} : (u_n)_n\subset \Lip_\loc (\Omega);
    u_n\xrightarrow{L^1_\loc(\Omega)} u \right\} \\
    \end{split}
    \end{equation*}
    \end{Def}

\noindent Now, we can define functions of (locally) bounded variation.

\begin{Def}\label{def:BV}
The space of functions with (locally) bounded total variation is 
\begin{itemize}
    \item $BV(\Omega):= \left\{ u\in L_\loc^1(\Omega) : \norm{\nabla u} (\Omega) < \infty\right\}$
    \item $BV_\loc (\Omega) := \left\{ u \in L_\loc^1(\Omega) : \norm{\nabla u} (A) < \infty \text{ for any } A\subset \Omega \text{ open},\right. \\
        \left. \overline{A}\subset \Omega \text{ compact }\right\}$
\end{itemize}
\end{Def}

\noindent The gradient can be computed as a limit for an approximating sequence.

\begin{Prop}[Approximating $\norm{\nabla u}(\Omega)$] \label{prop:ApproxGrad} 

For any $u{\in}BV(\Omega)$ with $\norm{\nabla u}(\Omega)<\infty$ there exists $(u_n) \subset \Lip_\loc(\Omega)$ such that
    \begin{itemize}
        \item $u_n \longrightarrow u$ in $L_\loc^1(\Omega)$,
        \item $\norm{\nabla u}(\Omega)=\ds \lim_{n\rightarrow \infty}\ds\int_\Omega \norm{\nabla u_n}(x)
= \ds\lim_{n\rightarrow\infty} \norm{\nabla u_n}(\Omega)$.
    \end{itemize}

\end{Prop}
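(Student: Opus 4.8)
The plan is to read the recovery sequence off the very definition of $\norm{\nabla u}(\Omega)$ in Definition \ref{def:GradLocLip}, which is already a relaxed functional (an infimum of $\liminf$'s over $L^1_\loc$-approximating locally Lipschitz sequences), and then to upgrade the two-index $\liminf$ to an honest single-index limit by a diagonal argument, handling the non-normable $L^1_\loc$-topology through an exhaustion of $\Omega$.

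\emph{Step 1: near-optimal families.} Fix an exhaustion $A_1\subset A_2\subset\cdots$ of $\Omega$ by open sets with $\overline{A_j}\subset\Omega$ compact and $\bigcup_j A_j=\Omega$. For each $k\in\N$, the infimum in Definition \ref{def:GradLocLip} provides $(u^{(k)}_m)_m\subset\Lip_\loc(\Omega)$ with $u^{(k)}_m\to u$ in $L^1_\loc(\Omega)$ and $\liminf_m\int_\Omega\norm{\nabla u^{(k)}_m}\le\norm{\nabla u}(\Omega)+1/k$. Passing to a subsequence in $m$, I may assume this $\liminf$ is a genuine limit, so there is an index $m(k)$ with $\int_\Omega\norm{\nabla u^{(k)}_{m(k)}}\le\norm{\nabla u}(\Omega)+2/k$ (in particular finite, using $\norm{\nabla u}(\Omega)<\infty$) and simultaneously $\norm{u^{(k)}_{m(k)}-u}_{L^1(A_k)}\le 1/k$.

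\emph{Step 2: diagonalization and the first limit.} Put $u_k:=u^{(k)}_{m(k)}$. For any fixed $j$ and all $k\ge j$ one has $\norm{u_k-u}_{L^1(A_j)}\le\norm{u_k-u}_{L^1(A_k)}\le 1/k$, so $u_k\to u$ in $L^1_\loc(\Omega)$; moreover $\limsup_k\int_\Omega\norm{\nabla u_k}\le\norm{\nabla u}(\Omega)$. Since $(u_k)\subset\Lip_\loc(\Omega)$ is itself an admissible approximating sequence for $u$, Definition \ref{def:GradLocLip} gives $\norm{\nabla u}(\Omega)\le\liminf_k\int_\Omega\norm{\nabla u_k}$. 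Hence $\lim_k\int_\Omega\norm{\nabla u_k}(x)=\norm{\nabla u}(\Omega)$, which is the first asserted limit.

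\emph{Step 3: the second limit, and the main obstacle.} Testing Definition \ref{def:GradLocLip} with the constant sequence shows $\norm{\nabla v}(\Omega)\le\int_\Omega\norm{\nabla v}$ for every $v\in\Lip_\loc(\Omega)$; applied to $v=u_k$ and combined with Step 2 this yields $\limsup_k\norm{\nabla u_k}(\Omega)\le\norm{\nabla u}(\Omega)$. For the matching lower bound I would use that the relaxed functional $v\mapsto\norm{\nabla v}(\Omega)$ is lower semicontinuous along $L^1_\loc$-convergent sequences, giving $\norm{\nabla u}(\Omega)\le\liminf_k\norm{\nabla u_k}(\Omega)$, whence $\lim_k\norm{\nabla u_k}(\Omega)=\norm{\nabla u}(\Omega)$ as well and the proof is complete. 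This lower semicontinuity is the only genuinely non-formal point: it can be obtained by applying Steps 1--2 to each (locally Lipschitz, hence $\norm{\nabla\cdot}(\Omega)$-finite) $u_k$ and performing one further diagonal extraction — which is precisely the statement that a relaxed functional is automatically lower semicontinuous for the topology of its relaxation — or it may simply be quoted from \cite{Vol,Mir,Nic,FlRi}. Everything else is bookkeeping with the infimum; the one subtlety there is that $L^1_\loc(\Omega)$ is not normable, which is exactly why the exhaustion $(A_j)$ is needed to produce a single diagonal sequence converging on every compact subset at once.
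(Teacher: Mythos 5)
Your argument is correct, but it is worth noting that the paper does not actually prove this proposition at all: its ``proof'' consists of the single line ``The statement can be found in \cite[p.~984]{Mir}.'' So you have supplied a genuine, essentially self-contained argument where the authors only give a pointer to the literature. Your route is the standard relaxation argument: since Definition \ref{def:GradLocLip} defines $\norm{\nabla u}(\Omega)$ as an infimum of $\liminf$'s over admissible locally Lipschitz approximations, a near-optimal choice for each $k$ plus a diagonal extraction along an exhaustion $(A_j)$ produces a single sequence realizing the infimum; the reverse inequality is free because the diagonal sequence is itself admissible. Your handling of the non-normability of $L^1_\loc$ via the exhaustion is exactly the right device, and the inequality $\norm{\nabla v}(\Omega)\le\int_\Omega\norm{\nabla v}$ for locally Lipschitz $v$ (tested with the constant sequence) correctly reduces the second limit to lower semicontinuity of $v\mapsto\norm{\nabla v}(\Omega)$ under $L^1_\loc$ convergence. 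That last ingredient is legitimately quotable: it is recorded as Theorem \ref{thm:lower semi-cont} in the same appendix (with proof in \cite[Theorem 2.38]{Vol}), and there is no circularity since lower semicontinuity of a relaxed functional does not rely on the approximation statement you are proving. The only points I would tighten are cosmetic: you should note that a locally Lipschitz function lies in $L^1_\loc(\Omega)$ (so that the constant-sequence test is admissible), which holds because $\mu$ is finite on compact sets for a good metric measure space, and you should make explicit that your diagonal sequence $(u_k)$ lies in $BV(\Omega)$ (which follows from $\norm{\nabla u_k}(\Omega)\le\int_\Omega\norm{\nabla u_k}<\infty$) so that Theorem \ref{thm:lower semi-cont} applies to it.
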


\begin{proof}
The statement can be found in \cite[p. 984]{Mir}. 
\end{proof}

We will also need the following result. 

\begin{theorem}[lower semi-continuity]\label{thm:lower semi-cont} 
Suppose $(u_n)\subset BV(\Omega)$, and we have convergence 
$u_n\longrightarrow u$ in $L_\loc^1(\Omega)$ as $n\to \infty$. Then
\begin{equation}\label{eq:lower semi-cont}
    \norm{\nabla u}_g(\Omega)\leq \ds\liminf_{n\rightarrow\infty}{\norm{\nabla u_n}_g(\Omega)}.
\end{equation}
\end{theorem}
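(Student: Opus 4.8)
The plan is to read the inequality off directly from the definition of the gradient norm $\norm{\nabla u}_g(\Omega)$ as an infimum over locally Lipschitz approximating sequences (Definition \ref{def:GradLocLip}), combining Proposition \ref{prop:ApproxGrad} with a diagonal argument. First I would dispose of the trivial case: if $\liminf_{n\to\infty}\norm{\nabla u_n}_g(\Omega)=+\infty$ there is nothing to prove. Otherwise I would pass to a subsequence (relabelled as $(u_n)$; the convergence $u_n\to u$ in $L^1_\loc(\Omega)$ is inherited by every subsequence) along which $\norm{\nabla u_n}_g(\Omega)\to L<\infty$, and it then suffices to exhibit a competitor in the infimum defining $\norm{\nabla u}_g(\Omega)$ whose limiting Lipschitz energy equals $L$.

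To build such a competitor, fix an exhaustion $K_1\subset K_2\subset\cdots$ of $\Omega$ by compact sets with $\bigcup_j K_j=\Omega$, so that $L^1_\loc(\Omega)$-convergence is metrized by $\mathfrak{d}(f,h):=\sum_{j\ge 1}2^{-j}\min\bigl(1,\norm{f-h}_{L^1(K_j)}\bigr)$. For each $n$, since $u_n\in BV(\Omega)$ (Definition \ref{def:BV}), Proposition \ref{prop:ApproxGrad} provides $(u_{n,k})_k\subset\Lip_\loc(\Omega)$ with $u_{n,k}\to u_n$ in $L^1_\loc(\Omega)$ and $\int_\Omega\norm{\nabla u_{n,k}}_g\,d\vol_g\to\norm{\nabla u_n}_g(\Omega)$ as $k\to\infty$. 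I would then choose $k(n)$ with $\mathfrak{d}(u_{n,k(n)},u_n)<1/n$ and $\absbig{\int_\Omega\norm{\nabla u_{n,k(n)}}_g\,d\vol_g-\norm{\nabla u_n}_g(\Omega)}<1/n$, and set $v_n:=u_{n,k(n)}\in\Lip_\loc(\Omega)$.

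Then $\mathfrak{d}(v_n,u)\le\mathfrak{d}(v_n,u_n)+\mathfrak{d}(u_n,u)\to 0$, i.e.\ $v_n\to u$ in $L^1_\loc(\Omega)$, while $\int_\Omega\norm{\nabla v_n}_g\,d\vol_g\to L$. Since $(v_n)_n$ is admissible in the infimum of Definition \ref{def:GradLocLip}, this yields
\[
\norm{\nabla u}_g(\Omega)\le\liminf_{n\to\infty}\int_\Omega\norm{\nabla v_n}_g\,d\vol_g=L=\liminf_{n\to\infty}\norm{\nabla u_n}_g(\Omega),
\]
the last liminf being that of the original sequence, which is \eqref{eq:lower semi-cont}.

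The argument should be essentially routine; the one step I expect to need care is the diagonal extraction, i.e.\ making the Lipschitz approximation of each $u_n$ accurate enough in $L^1_\loc$ \emph{and} in gradient energy at the same time, which is precisely what metrizability of $L^1_\loc(\Omega)$ delivers. No compactness beyond Proposition \ref{prop:ApproxGrad} is needed, since the genuine lower-semicontinuity content is already built into the definition of $\norm{\nabla u}_g(\Omega)$ for merely $L^1_\loc$ functions; and the subscript $g$ changes nothing, as one just applies the appendix's metric-measure-space definitions with $\mathfrak{d}=\mathfrak{d}_g$ and $\mu=d\vol_g$.
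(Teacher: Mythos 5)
Your argument is correct. Note that the paper does not actually supply a proof of this theorem: it simply cites \cite[Theorem 2.38]{Vol} and remarks that no hypotheses on $\Omega$ are needed. What you have written is precisely the standard argument behind that citation: since $\norm{\nabla u}_g(\Omega)$ is \emph{defined} (Definition \ref{def:GradLocLip}) as an infimum of relaxed energies over locally Lipschitz sequences converging in $L^1_\loc$, lower semicontinuity is automatic once one performs a diagonal extraction, and your use of the exhaustion metric $\mathfrak{d}$ is exactly the right device to make ``accurate enough in $L^1_\loc$ and in energy simultaneously'' precise. Two small points worth recording: (i) Proposition \ref{prop:ApproxGrad} is applicable because each $u_n$ lies in $BV(\Omega)$ by hypothesis, though you could equally well bypass it and pick, for each $n$, an admissible Lipschitz sequence realizing the infimum up to $1/n$ directly from the definition; (ii) the exhaustion of $\Omega$ by compacta exists because $\Omega$ is an open subset of a (possibly incomplete) manifold, hence locally compact and second countable — this is harmless in the paper's setting but is the one place where some structure of the ambient space enters. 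With those observations your proof is complete and self-contained, which is arguably an improvement over the paper's bare reference.
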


The proof of Theorem \ref{thm:lower semi-cont}, see e.g. \cite[Theorem 2.38]{Vol}, 
does not require any conditions on $\Omega$. In particular $\overline{\Omega}\subset M $ does not need to be compact.
\medskip

We conclude the section with a definition of perimeter.

\begin{Def}[Caccioppoli sets]\label{def:CacciSets}
Let $(X,\mathfrak{d},\mu)$ be a good metric measure space, arising from a Riemannian manifold $(M,g)$
of dimension $d$.
Let $E\subset M$ be a $\cH_{\mathfrak{d}}^{d}$-measurable set.
The perimeter of $E$ in $M$ is defined by 
\begin{equation*}
    P_g(E,\Omega):=\norm{\nabla \chi_E}(\Omega),
\end{equation*}
where $\chi_E:M\rightarrow\{0,1\}$ is the characteristic function of $E$.
If the perimeter is finite, $E$ is called a Caccioppoli set with respect to $\Omega$.
\end{Def}

This definition is compatible with our intuitive understanding of the perimeter
as the (Hausdorff) measure of the boundary. Namely, by 
e.g., \cite[Ch. 1]{Gi}, if $E\subset M$ is a Borel set with $C^1$-boundary $\partial E$, then $E$ is a Caccioppoli set with 
\begin{equation}\label{prop:PerimHausdorff}
    P_g(E,M)=\cH_{\mathfrak{d}}^{d-1}(\partial E).
\end{equation}

\subsection{$\Gamma$-convergence}
$\Gamma$-convergence is a powerful tool from the calculus of variations in order to study convergence of minimizers. 
We will recall the most important definitions and properties of $\Gamma$-convergence based on \cite[Ch. 13.1]{Ri}.

\begin{Def}[Abstract $\Gamma$-Convergence]\label{def:Gamma-conv}
Let $X$ be a complete metric space. The functional $\cF_\infty:X\rightarrow \R\cup\{+\infty\}$ is called 
(sequential) $\Gamma$-limit of the functionals $\cF_k:X\rightarrow\R\cup\{+\infty\}$ (denoted by 
$\cF_\infty:=\Gamma-\ds\lim_{k\rightarrow\infty}\cF_k$), $k\in\N$, if the following two conditions are satisfied.
\begin{itemize}
    \item For all sequences $(u_k)\subset X$ the " $\liminf$-inequality" holds
    \begin{equation}\label{eq:liminf-ineq}
    u =  \lim\limits_{k\rightarrow\infty}{u_k} \quad  \Longrightarrow  \quad 
        \cF_\infty[u]\leq \ds\liminf_{k\rightarrow\infty}{\cF_k[u_k]}
    \end{equation}
    \item For all $u\in X$ there exists as
     \textbf{recovery sequence} $(u_k)\subset X$ such that
     \begin{equation}\label{eq:recovery seq}
        u = \lim\limits_{k\rightarrow\infty}{u_k}, \quad \cF_\infty[u]=\ds\lim_{k\rightarrow\infty}\cF_k[u_k].
     \end{equation}
\end{itemize}
\end{Def}

\begin{Rem}\label{rem:limsup-ineq}
If the first condition in Definition \ref{def:Gamma-conv} is satisfied, then the second condition can be altered to the so 
called " $\limsup$-inequality": For all $u\in X$ there exists a sequence $u_k\longrightarrow u$ in $X$ as $k\rightarrow\infty$ such that
\begin{equation}\label{eq:limsup-ineq}
    \cF_\infty[u]\geq \ds\limsup_{k\rightarrow\infty}{\cF_k[u_k]}.
\end{equation}
\end{Rem}

An important consequence of $\Gamma$-convergence is that the limit
functional $\cF_\infty=\Gamma-\ds\lim_{k\rightarrow\infty}\cF_k$ is lower semi-continuous, 
cf.~\cite[Proposition 13.2]{Ri}. Moreover, $\Gamma$-convergence implies convergence of minima and the corresponding minimizers. 
More specifically we have the following results. 

\begin{theorem}\label{thm:conv_min}
Let $X$ be a complete metric space. Consider a sequence of functionals 
$\cF_k:X\rightarrow\R\cup\{+\infty\}$, $k\in \N$, and assume 
that the $\Gamma$-limit 
$\cF_\infty=\Gamma-\ds\lim_{k\rightarrow\infty}\cF_k$ exists. 

\begin{enumerate}
\item Assume that the functionals $\{\cF_k\}_{k\in\N}$ are equicoercive, 
i.e.~there exists a compact set $K\subset X$ such that
$\ds\inf_X \cF_k=\ds\inf_K \cF_k$ for all $k\in \N$.
Then, $\cF_\infty$ has a minimizer and 
\begin{equation*}
    \ds\min_X \cF_\infty= \ds\lim_{k\rightarrow \infty}\ds\inf_X \cF_k. 
\end{equation*}
In addition, all accumulation points of any precompact sequence $(u_k)\subset X$ with the property 
$\liminf_{k\rightarrow+\infty}\cF_k[u_k]=\liminf_{k\rightarrow+\infty}\inf_X \cF_k$ are minimizers of $\cF_\infty$.
\item Without the assumption of equicoercivity, 
a minimizer for $\cF_\infty$ need not exist a priori and we only have: If 
a sequence $\{u_k\}\subset X$ of minimizers  for $\{\cF_k\}$ 
converges to $u \in X$, then $u$ is a minimizer of 
$\cF_\infty$ and $\ds\lim_{k \rightarrow. \infty}\cF_k[u_k]=\cF_\infty[u]$.
\end{enumerate}
\end{theorem}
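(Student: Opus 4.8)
The plan is to prove this via the classical ``fundamental theorem of $\Gamma$-convergence'' (De~Giorgi), using only the two defining properties of the $\Gamma$-limit already recorded above: the $\liminf$-inequality \eqref{eq:liminf-ineq} and the existence of recovery sequences \eqref{eq:recovery seq} (equivalently, by Remark~\ref{rem:limsup-ineq}, the $\limsup$-inequality \eqref{eq:limsup-ineq}). The structural point I would isolate first is that recovery sequences alone yield, \emph{without any hypothesis}, the ``easy half'': for every $v\in X$ pick a recovery sequence $v_k\to v$ with $\cF_k[v_k]\to\cF_\infty[v]$; then $\inf_X\cF_k\le\cF_k[v_k]$, so $\limsup_k\inf_X\cF_k\le\cF_\infty[v]$, and taking the infimum over $v$ gives $\limsup_k\inf_X\cF_k\le\inf_X\cF_\infty$. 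Equicoercivity is then precisely what delivers the reverse inequality together with attainment.

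For part~(1), write $m_k:=\inf_X\cF_k=\inf_K\cF_k$. I would choose near-minimizers $w_k\in K$ with $\cF_k[w_k]\le m_k+\tfrac1k$ — here equicoercivity is used, forcing the relevant near-minimizers into the fixed compact set $K$. By compactness of $K$, passing to a subsequence we may assume $w_{k_j}\to w_*\in K$; refining once more, we may also assume $\cF_{k_j}[w_{k_j}]\to\liminf_k m_k$. The $\liminf$-inequality applied along this subsequence gives $\cF_\infty[w_*]\le\liminf_k m_k$. Combined with the easy half this produces the chain $\liminf_k m_k\le\limsup_k m_k\le\inf_X\cF_\infty\le\cF_\infty[w_*]\le\liminf_k m_k$, so all terms coincide: $\lim_k\inf_X\cF_k$ exists and equals $\min_X\cF_\infty=\cF_\infty[w_*]$, and $w_*$ is a minimizer. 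For the accumulation-point assertion, read in its usual form (almost-minimizers, $\cF_k[u_k]\le\inf_X\cF_k+\epsilon_k$ with $\epsilon_k\to0$), one takes any cluster point $u$, a subsequence $u_{k_j}\to u$, and concludes from the $\liminf$-inequality that $\cF_\infty[u]\le\liminf_j(m_{k_j}+\epsilon_{k_j})=\lim_k m_k=\min_X\cF_\infty$, so $u$ is a minimizer.

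For part~(2), let $u_k$ be a minimizer of $\cF_k$ and $u_k\to u$. Given an arbitrary $v\in X$ with recovery sequence $v_k\to v$, minimality gives $\cF_k[u_k]\le\cF_k[v_k]$, whence by the $\liminf$-inequality $\cF_\infty[u]\le\liminf_k\cF_k[u_k]\le\liminf_k\cF_k[v_k]=\cF_\infty[v]$; as $v$ was arbitrary, $u$ minimizes $\cF_\infty$. For convergence of values, the $\liminf$-inequality gives $\cF_\infty[u]\le\liminf_k\cF_k[u_k]$, while choosing a recovery sequence $\tilde u_k\to u$ for $u$ itself and invoking minimality gives $\limsup_k\cF_k[u_k]\le\limsup_k\cF_k[\tilde u_k]=\cF_\infty[u]$; the squeeze yields $\lim_k\cF_k[u_k]=\cF_\infty[u]$. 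This is exactly the abstract statement invoked through \cite[Proposition~4]{Mo} right after Theorem~\ref{thm:1}, applied with $X=L^1(M)$, $\cF_\eps=\cE_\eps$, $\cF_0=\cE_0$.

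The result is, in the end, a bookkeeping exercise once \eqref{eq:liminf-ineq} and \eqref{eq:recovery seq} are available, and I do not expect a real obstacle. The only points requiring care are: (i) the \emph{simultaneous} choice of a subsequence realizing both $w_{k_j}\to w_*$ in $X$ and $\cF_{k_j}[w_{k_j}]\to\liminf_k m_k$, so that the $\liminf$-inequality may be applied along it; (ii) keeping straight that equicoercivity is needed \emph{only} for part~(1), to place near-minimizers in $K$ and hence rule out $\liminf_k m_k=-\infty$; and (iii) a few degenerate cases ($\inf_X\cF_\infty=\pm\infty$, $\cF_\infty\equiv+\infty$), which in the application of interest are vacuous since there all functionals are nonnegative.
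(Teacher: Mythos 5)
Your argument is correct and is the standard De~Giorgi ``fundamental theorem of $\Gamma$-convergence'' proof; the paper itself does not write out a proof at all but simply cites \cite[Theorem 13.3]{Ri} for part (1) and \cite[Proposition 4]{Mo} for part (2), so the difference is not one of method but of self-containedness: you supply the argument the paper delegates to the literature, and the cited proofs run along exactly the two steps you isolate (recovery sequences give $\limsup_k\inf_X\cF_k\le\inf_X\cF_\infty$ unconditionally; equicoercivity plus the $\liminf$-inequality close the loop). Two details are worth making explicit if this were to replace the citation. First, \eqref{eq:liminf-ineq} is stated for full sequences, so to apply it along the subsequence $w_{k_j}\to w_*$ you should either interleave the $w_{k_j}$ with a recovery sequence for $w_*$ at the remaining indices, or note that a $\Gamma$-limit is also the $\Gamma$-limit of every subsequence; and the subsequence realizing $\liminf_k m_k$ should be extracted \emph{before} the one giving convergence of the $w_{k_j}$ (extracting in the opposite order, as your main text does, only controls the $\liminf$ along the already-chosen subsequence) --- you flag precisely this in your point (i), so it is a matter of ordering the write-up, not a gap. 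Second, your decision to read the accumulation-point clause ``in its usual form'' is not merely cosmetic: as literally stated, with only $\liminf_k\cF_k[u_k]=\liminf_k\inf_X\cF_k$, the clause fails for a sequence alternating between minimizers and non-minimizers of a fixed functional, so the hypothesis must indeed be understood as $\cF_k[u_k]\le\inf_X\cF_k+\epsilon_k$ with $\epsilon_k\to0$ (which is what \cite[Theorem 13.3]{Ri} assumes), and your proof establishes that corrected version. Part (2) is handled exactly as in \cite[Proposition 4]{Mo}.
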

\begin{proof}
See e.g. \cite[Theorem 13.3]{Ri} for the first statement and \cite[Proposition 4]{Mo} for the second statement.
\end{proof}
\brem{\rm 
As noted in Remark \ref{ueexrem}, for our problem we can establish 
$u_\eps\to u_0$ in $L^1(M)$ via a priori estimates on $u_\eps$ in $H^1(M)$ 
and the compact embedding $H^1(M)\hookrightarrow L^1(M)$, 
which in fact yields the equicoercivity of our $E_\eps$. }
\eex\erem

\section{Conical singularities. $\mathscr{D}(\Delta_D), \mathscr{D}(\Delta_N)$ versus $H^2(M)$}\label{coneHD}

\begin{definition} \label{def:mfd con}
 A {\em compact manifold with boundary and conical singularity $P$} 
 is a metric space $\Mbar=M\cupdot\{P\}$
where $M$, the \textit{regular part}, is a smooth manifold, equipped with a Riemannian metric $g$, which admits a decomposition 
\[
M = \mathscr{C}(N) \cup_N X
\]
into a compact Riemannian manifold $X$ with disjoint boundary components $N$ and $\pa M$ and an open
truncated generalized cone $\mathscr{C}(N)$ over the closed manifold $N$. That is,
$\mathscr{C}(N)=(0,1]\times N$ and the metric on $\mathscr{C}(N)$ takes the 
special \lq conical\rq\ form
\begin{align*}
g|_{\mathscr{C}(N)}:=dr^2\oplus r^2g^N(r), \ r\in (0,1], 
\end{align*}
where $g^N(r)$ is a family of metrics on $N$ which is smooth up to $r=0$. 
The boundary $\{1\}\times N$ of $\mathscr{C}(N)$ is glued to the boundary component $N$ of $X$, and the metric on $\Mbar$ is defined by using the metric $d_M$ on $M$ induced by $g$ and setting, for $Q\in M$, 
$$d(P,Q):=\lim_{r\to0} d_M((r,y),Q)\quad \text{ for any } y\in N.$$
\end{definition}
\noindent
This means that the cone tip $\{ P \}$ corresponds to $r=0$.\footnote{See
  \cite{Gri:NDOCS} and \cite{MeWu:Geo} for more details on conical
  singularities, in particular for the proof that the intuitive notion
  of 'submanifold of $\R^m$ with conical singularity' satisfies this
  condition.}  $N$ is referred to as the \emph{cross section} of the
conical singularity.
 See Figure \ref{cone-fig} for an example with $\partial M=\emptyset$. We also speak of $M$ as a manifold with conical singularity $P$.
In a similar way we can admit several (finitely many)  conical singularities.

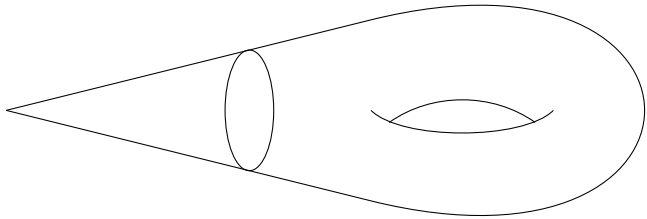
\begin{figure}[h]
\begin{center}
\begin{tikzpicture}[scale=1.6]
\draw (-2,4) -- (1,4.75);
\draw (-2,4) -- (1,3.25);
\draw (0,4) ellipse (0.2cm and 0.5cm);
\draw (1,4.75) .. controls (4,5.5) and (4,2.5) .. (1,3.25);
\draw (1,4) .. controls (1.25,3.75) and (2.25,3.75) .. (2.5,4);
\draw (1.15,3.9) .. controls (1.5,4.15) and (2,4.15) .. (2.35,3.9);
\end{tikzpicture}
\end{center}
\caption{Illustration of a manifold with a conical singularity.} \label{cone-fig}
\end{figure}

For a surface $\Mbar$, where $N=\mathbb{S}^1$, we define the
\textit{angle} of the conical singularity as
$\al=\lim_{r\to0} \frac{\ell_r}r$ where $\ell_r$ is the length of
$\mathbb{S}^1$ with respect to $g_{\mathbb{S}^1}(r)$. If $\Mbar$ is a straight cone
embedded in $\R^3$ with the conical tip at $P$ (i.e.\ $\Mbar$ is a union of straight segments starting at $P$, then $g_N$ is independent of $r$), then $\ell_r$ is the
length of the intersection of $\Mbar$ with a sphere of radius $r$ around
$P$, for small $r$.  The angle $\alpha$ also defines the opening angle
in a representation of $\Mbar$ as a subset of the plane (see Figure
\ref{f7}(b)).  
For the elliptic cone \reff{cpara} the intersection with
a small sphere is a convex curve contained in the open lower half
sphere, so $\al<2\pi$.  

We want to show here that
for $\alpha > 2\pi$
in spite of \eqref{DNH} generally
\begin{equation}\label{DNH2}\begin{split}
& \mathscr{D}(\Delta_D) \supsetneqq \{ u \in H^2(M) \mid \textup{tr} \, u = 0\}, \\
& \mathscr{D}(\Delta_N) \supsetneqq \{ u \in H^2(M) \mid \textup{tr} \circ \partial_\nu \, u = 0\}.
\end{split}\end{equation}
Let $(\lambda, \omega_\lambda)_{\lambda}$ be the set of eigenvalues and an orthonormal basis of 
corresponding eigenfunctions of the Laplace Beltrami operator $\Delta_N$ of $(N= \mathbb{S}^1,g_N)$.
Consider the minimal and maximal domains $\mathscr{D}(\Delta_{\min}), \mathscr{D}(\Delta_{\max})$
of the Laplacian, defined exactly as in \eqref{minmax-def} with $\nabla$ replaced by $\Delta$.
Classical arguments, see also e.g. some recent applications \cite[Lemma 2.2]{Vertman} or \cite{KLP:FDG}, 
show that for each $\omega \in \mathscr{D}(\Delta_{\max})$
there exist constants $c^\pm_\lambda(\omega)$ for all $\lambda \in [0,1)$, such that $\omega$ admits a partial asymptotic expansion as $r\to 0$
\begin{equation}\label{cone-asymptotics}
\begin{split}
\omega =& \sum_{\lambda = 0} \left(c^+_{\lambda}(\omega)
+ c^-_{\lambda}(\omega) \log(r) \right) \cdot \omega_\lambda 
\\ & + \sum_{\lambda \in (0,1)} \left(c^+_{\lambda}(\omega) r^{\sqrt{\lambda}}
+ c^-_{\lambda}(\omega) r^{-\sqrt{\lambda}} \right) \cdot \omega_\lambda 
 + \widetilde{\omega},
\end{split}
\end{equation}
where $\widetilde{\omega} \in \mathscr{D}(\Delta_{\min})$. The domains $\mathscr{D}(\Delta_D), \mathscr{D}(\Delta_N)$
are characterized by conditions on the coefficients $c^\pm_\lambda(\omega)$ that replace the usual boundary conditions: $\omega \in  \mathscr{D}(\Delta_{\max})$ is an element of 
$\mathscr{D}(\Delta_D)$ or $\mathscr{D}(\Delta_N)$ if and only if $c^-_\lambda(\omega) = 0$ for all $\lambda$, in addition to Dirichlet or Neumann
boundary conditions at the regular boundary, respectively. We can now give an example for \eqref{DNH2}.

\begin{example}{\rm Consider the conical singularity
    $(r,\theta) \in (0,1] \times \mathbb{S}^1$ with metric
    $dr^2 +  c^2r^2d\theta^2$, where $ c > 1$ and hence 
angle $\al=2\pi c>2\pi$. Here
    $\dim M \equiv m=2$. Then
    $(N,g_N) = (\mathbb{S}^1,  c^2 d\theta^2)$ with eigenvalues
    $\lambda$ given by $k^2/ c^2, k\in \Z$. Consider a cutoff
    function $\phi \in C^\infty[0,1]$ with $\phi \equiv 1$ near $r=0$
    and $\phi\equiv 0$ near $r=1$, and the first non zero
    eigenvalue $1/ c^2$ with eigenfunction
    $\omega_{1/ c^2}$. Then
    $\omega(r,\theta) := r^{1/ c}
    \omega_{1/ c^2}(\theta)\phi(r)$ lies in
    $\mathscr{D}(\Delta_D)$ and $\mathscr{D}(\Delta_N)$.  However,
    $\partial_r^2 \omega = r^{-2+1/ c} \omega_{1/ c^2}$ near
    $r=0$, which is not in $L^2(M,g)$ since the volume element is
    $r\,dr d\theta$, and hence $\omega \notin H^2(M)$. }
\end{example}

\providecommand{\href}[2]{#2}

${}^*$ daniel.grieser@uni-oldenburg.de, 
University Oldenburg, Germany 

${}^\dagger$ sina.held@uni-oldenburg.de, \
University Oldenburg, Germany 

${}^\ddagger$ hannes.uecker@uni-oldenburg.de, \
University Oldenburg, Germany 

${}^\sharp$ boris.vertman@uni-oldenburg.de \
University Oldenburg, Germany

\end{document}